\newtheorem{theorem}{Theorem}[section]
\newtheorem{corollary}[theorem]{Corollary}
\newtheorem{definition}[theorem]{Definition}
\newtheorem{lemma}[theorem]{Lemma}
\newtheorem{notation}[theorem]{Notation}
\newtheorem{proposition}[theorem]{Proposition}
\newtheorem{observation}[theorem]{Observation}
\newtheorem{claim}[theorem]{Claim}
\newtheorem{remark}[theorem]{Remark}
\newcommand{\eat}[1]{}
\newcommand{\grass}[2]{G_{{#1},{#2}}}
\newcommand{\taumn}[2]{w_{#1,#2}}
\newcommand{\gmodp}{G/P}
\newcommand{\gmodq}{G/Q}
\newcommand{\gmnmodt}[2]{T \backslash\mkern-6mu\backslash (\grass{#1}{#2})}
\newcommand{\xtaumodt}[2]{T\backslash\mkern-6mu\backslash X(\taumn{#1}{#2})}
\newcommand{\richmodt}[3]{T\backslash\mkern-6mu\backslash (X^{#2}_{#1})^{ss}_T(\
#3)}
\providecommand{\keywords}[1]{\textbf{\textit{Key words---}} #1}
\providecommand{\subject}[1]{\textbf{\textit{Subject Classification---}} #1}
\begin{document}
\title{Torus quotients of Richardson varieties in the Grassmannian}
\author{
Sarjick Bakshi
\thanks{Chennai Mathematical Institute, Chennai, India, {\tt sarjick@cmi.ac.in}}
\and
S. Senthamarai Kannan \thanks{Chennai Mathematical Institute, Chennai,
  India, {\tt kannan@cmi.ac.in}}
\and
K.Venkata Subrahmanyam \thanks{Chennai Mathematical Institute,
  Chennai, India, {\tt kv@cmi.ac.in}}
}
\date{October 25, 2018}
\maketitle
\begin{abstract}
We study the GIT quotient of the minimal Schubert variety in the Grassmannian admitting semistable points for the action of maximal torus $T$, with respect to the $T$-linearized line bundle ${\cal L}(n \omega_r)$ and show that this is smooth when $gcd(r,n)=1$. When $n=7$ and $r=3$ we study the GIT quotients of all Richardson varieties in the minimal Schubert variety. This builds on previous work by Kumar \cite{kumar2008descent}, Kannan and Sardar \cite{kannan2009torusA}, Kannan and Pattanayak \cite{kannan2009torusB}, and recent work of Kannan et al \cite{kannan2018torus}. It is known that the GIT quotient of $G_{2,n}$ is projectively normal. We give a different combinatorial proof.

\end{abstract}

\keywords{GIT,Semistable points, Projective normality, Richardson varieties.}\\

\subject{14M15}

\section{Introduction}\label{s.Introduction}
Let $G$ be a simply connected semi-simple algebraic group over ${\mathbb C}$. Let $T$ be a maximal torus of $G$. Let $B$ be a Borel subgroup of $G$ containing $T$. We denote by $B^{-}$ the Borel subgoup of $G$ opposite to $B$ determined by $T$. Let $Q \supseteq B$ be a parabolic subgroup of $G$ containing $B$. Then $\gmodq$ is a projective variety (see, Jantzen \cite{jantzen2007representations}). Let ${\cal L}$ be a $T$-linearized ample line bundle on $\gmodq$. A point $p \in \gmodq$ is said to be semistable with respect to the $T$-linearized line bundle ${\cal L}$ if there is a $T$-invariant section of a positive power of ${\cal L}$ which does not vanish at $p$.  We denote by $({\gmodq})^{ss}_T({\cal L})$ 
the set of all semistable points with respect to ${\cal L}$. A point in  $({\gmodq})^{ss}_T({\cal L})$ is said to be stable if its $T$-orbit is closed in $(\gmodq^{ss})_T(\cal L)$ and its stabilizer in $T$ is finite.  Let $({\gmodq})^{s}_T({\cal L})$ denote the set of all stable points with respect to ${\cal L}$. This paper is motivated by the question of understanding the GIT quotient of $\gmodq$ with respect to the $T$-linearized bundle ${\cal L}$. 

When $G$ is of type A this problem has been well studied. There is a reasonable body of work when $G$ is of type other than A. Although the results in this paper pertain only to the case when $G$ is of type A we will nevertheless assume that $G$ is of general type  in this introduction to give a comprehensive survey of the results known. For that we will need to introduce some notation. We follow the notation from Lakshmibai and Raghavan,\cite{lakshmibai2007standard}.

Let $X(T)$ denote the group of characters of $T$.  In the root system $R$ of $(G,T)$ let $R^{+}$ denote the set of positive roots with respect to $B$. Let
Let  $S=\{\alpha_1,\ldots,\alpha_l\}\subseteq R^{+}$ denote the set of
simple roots and $\{\omega_1,\ldots,\omega_l\}$ the fundamental weights. Let $U$ (respectively, $U^{-}$) be the unipotent radical of $B$ (respectively, $B^{-}$). For each $\alpha \in R^{+}$,
let $U_{\alpha}$ (respectively, $U^{-}_{-\alpha}$) denote the additive one-dimensional subgroup of $U$ (respectively, $U^{-}$) corresponding to the root $\alpha$ (respectively, $-\alpha$) normalized by $T$. 

Let $N_G(T)$ denote the normalizer of $T$ in $G$. The Weyl group $W$ of $G$ is defined to be the quotient $N_G(T)/T$, and for every $\alpha \in R$ there is a corresponding reflection $s_{\alpha} \in W$.  $W$ is generated by $s_{\alpha}$, $\alpha$ running over simple roots in $S$. This also defines a length function $l$ and the Bruhat order on $W$.

For a subset $I\subseteq S$ denote $W^{I}=\{w\in W|w(\alpha)>0,\alpha\in I\}$ and $W_{I}$ be the subgroup of $W$ generated by $s_{\alpha}$, $\alpha \in I$. 
Then every $w\in W$ can be
uniquely expressed as $w=w^{I}w_{I}$, with $w^{I}\in W^{I}$ and $w_{I}\in W_{I}$. 
For $w\in W$, let $n_w \in N_{G}(T)$ be a representative of $w$.  We denote by $P_{I}$ the parabolic subgroup 
of $G$ generated by $B$ and $n_{w}$, $w \in W_{I}$. Then $W_{I}$ is the Weyl group of the parabolic subgroup $P_{I}$ and abusing notation we also denote it as $W_{P_{I}}$.
When $I = S \setminus \{\alpha_r\}$, has cardinality one less than the cardinality of $S$, we denote the corresponding maximal parabolic subgroup of $G$ by $P_{\hat{\alpha_r}}$. 

The quotient space $\gmodp$ is a homogenous space for the left action of $G$.
The $T$ fixed points in $G/P$ are $e_w=wP/P$ with $w \in W^{P}$. The $B$-orbit $C_w$ of $e_w$, is called a Bruhat cell and it is an affine space of dimension $l(w)$. The closure of $C_w$ in $\gmodp$ is the Schubert variety $X(w)$. The opposite Bruhat cell $C^w$ is the $B^{-}$ orbit of $e_w$ and its closure, denoted by $X^w$, is the opposite Schubert variety. For a $T$-linearized line bundle ${\cal L}$ on a Schubert variety in $G/P$ we define the notion of semistable and stable points as before. We use the notation $X(w)^{ss}_T({\cal L})$ (respectively,  $X(w)^{s}_T({\cal L})$) to denote the semistable (respectively, stable) points for the $T$-linearized line bundle ${\cal L}$.

Every character $\lambda$ of $P$ defines a $G$-linearized line bundle on $G/P$. We denote the line bundle by ${\cal L}(\lambda)$. Furthermore, ${\cal L}(\lambda)$ is generated by global sections if and only if $\lambda$ is a dominant weight,(see \cite[Part II, Proposition 2.6]{jantzen2007representations}). 

When $G = SL(n,\mathbb{C})$ and $P=P_{\hat{\alpha_r}}$, $\gmodp$ is the Grassmannian parametrizing $r$-dimensional subspaces of  $\mathbb{C}^n$. We denote it by $\grass{r}{n}$. The Grassmannian $\grass{r}{n}$ comes with the Plucker embedding 
 $\grass{r}{n} \hookrightarrow \mathbb{P}(\bigwedge^r\mathbb{C}^n)$
 sending each $r$-dimensional subspace to its $r$-th exterior wedge
 product (see Fulton \cite{fulton1997young}).  The pull back of ${\cal O}(1)$ from the projective space to $\grass{r}{n}$ is an ample generator of the Picard group of 
$\grass{r}{n}$ and corresponds to the $T$-linearized line bundle ${\cal L}(\omega_r)$.
Gel'fand and Macpherson \cite{gelfand1982geometry}, considered the GIT quotient of the Grassmannian and  showed that the
$GIT$ quotient of 
$n$-points in ${\mathbb P}^{r-1}$ (spanning ${\mathbb P}^{r-1}$) by the diagonal action of $PGL(r,\mathbb{C})$ is isomorphic to the GIT quotient of $\grass{r}{n}$ with respect to the $T$-linearized line bundle ${\cal L}(n \omega_r)$.
They showed that the torus action gives rise to a moment map from $\grass{r}{n}$ to   ${\mathbb R}^n$, with the property that the image of each orbit is a convex polyhedron.
This was extended by Gelfand et al in \cite{gelfand1987combinatorial}. In loc.cit. the authors proposed  three natural ways to stratify the Grassmannian - the first stratification is motivated by the equivalence of the torus quotient with the configuration of points in 
${\mathbb P}^{k-1}$, the second is motivated by the moment map above, and the third is motivated by the geometry of intersections of Schubert cells in the Grassmannian.
The authors show that no matter which definition is used to stratify the Grassmannian, the strata are the same.

Hausmann and Knutson \cite{hausmann1997polygon} used the GGMS stratification to study the GIT quotient of $\grass{2}{n}$
and related the resulting GIT quotient to the moduli space of polygons in ${\mathbb R}^3$.

Using the Hilbert-Mumford criterion, Skorobogatov \cite{skorobogatov1993swinnerton} gave combinatorial conditions determining when a point in $\grass{r}{n}$ is semistable with respect to the $T$-linearized bundle ${\cal L}(\omega_r)$. As a corollary he showed that when $r$ and $n$ are coprime semistability is the same as stability.  

Independently, for a general $G$, Kannan \cite{kannan1998torus} and \cite{kannan1999torus}, gave a description of parabolic subgroups $Q$ of $G$ for which there exists an ample line bundle $\mathcal{L}$ on $G/Q$ such that $(G/Q)^{ss}_{T}(\mathcal{L})$ is the same as 
$(G/Q)^{s}_{T}(\mathcal{L})$. In particular, in the case when $G=SL(n,\mathbb{C})$ and $Q=P_{\hat{\alpha_r}}$, Kannan showed that $(\grass{r}{n})^{s}_T({\cal L}(\omega_r))$
is the same as $(\grass{r}{n})^{ss}_T({\cal L}(\omega_r)$ if and only if $r$ and $n$ are coprime.

In the type $A$ case when $G=SL(n,\mathbb{C})$ and $Q$ is a parabolic subgroup, Howard \cite{howard2005matroids} considered the problem of determining which line bundles on $G/Q$ descend to ample line bundles of the GIT quotient of $G/Q$ by $T$. For a line bundle which descends to an ample line bundle on the quotient, by the Gelfand-MacPherson correspondence, the smallest power of
the descent bundle that is very ample would give an upper bound on the degree 
in which the ring of invariants of $n$-points spanning projective space ${\mathbb P}^{r-1}$ is generated. Howard showed that when ${\cal L}(\lambda)$ is a very ample line bundle on $G/Q$ (so the character of $T$ extends to $Q$ and to no larger subgroup of $G$) and 
$H^0(G/Q, {\cal L}(\lambda))^T$ is non-zero, the line bundle descends to the quotient\cite[Proposition 2.3, Theorem 2.3]{howard2005matroids}. He extended these results to the case when the $T$-linearization of ${\cal L}(\lambda)$ is twisted by
$\mu$, a character of $T$. He proved that the line bundle ${\cal L}(\lambda)$ twisted by $\mu$ descends to the GIT quotient provided the $\mu$-weight space of
$H^0(G/Q, {\cal L}(\lambda))$ is non-zero and this is so when $\lambda - \mu$ is in the root lattice and $\mu$ is in the convex hull of the Weyl orbit of $\lambda$.
This was extended to other algebraic groups by Kumar\cite[Theorem 3.10]{kumar2008descent}.

Kannan and Sardar \cite{kannan2009torusA} studied torus quotients of Schubert varieties in $\grass{r}{n}$. They showed that $\grass{r}{n}$ has a unique minimal Schubert variety,
$X(w_{r,n})$  admitting semistable points with respect to the $T$-linearized 
bundle ${\cal L}(\omega_r)$, and gave a
combinatorial characterization of  $w_{r,n}$.

Kannan and Pattanayak, \cite{kannan2009torusB} extended the results of \cite{kannan2009torusA} to the case when $G$ is of type $B,C$ or $D$ and when $P$ is a maximal parabolic subgroup of $G$. 
Then $G/P_{\hat{\alpha_r} }$ has an ample line bundle ${\cal L}(\omega_r)$.
Kannan and Pattanayak gave a combinatorial description of all minimal Schubert varieties in $G/B$ admitting semistable points with respect to ${\cal L}(\lambda)$ for any dominant character $\lambda$ of $B$.
  
Kannan et al \cite{kannan2018torus} extended the results in \cite{kannan2009torusA} to Richardson varieties in the Grassmannian $\grass{r}{n}$. Recall that a Richardson variety  in $\grass{r}{n}$ is the intersection of the Schubert variety $X(w)$ in $\grass{r}{n}$ with the opposite Schubert variety $X^{v}$ in $\grass{r}{n}$.  In \cite{kannan2018torus} the authors gave a criterion for Richardson varieties in $\grass{r}{n}$ to admit semistable points with respect to the $T$-linearized line bundle $\mathcal{L}(\omega_r)$. 

\subsection{Our results and Organization of the paper}
For all the results in this paper we assume $G$ is of type A.  In Section \ref{s:taurngen} we begin a study of the GIT quotient of
$\grass{r}{n}$ when $r$ is bigger than 2 and $(r,n)=1$. 
We study the GIT quotient of the minimal Schubert variety $X(w_{r,n})$ having semistable points with respect to the $T$-linearized line bundle ${\cal L}(n\omega_r)$. We show that $\xtaumodt{r}{n}^{ss}_T({\cal L}(n \omega_r))$is smooth. We show 
that $w_{r,n} = cv_{r,n}$, where $c$ is a Coxeter element  (i.e each simple reflection occurs exactly once in a reduced expression for $c$) and $l(w_{r,n})= n-1+ l(v_{r,n})$ with $v_{r,n}$ being the (unique) maximal element $v_{r,n} \in W^{S \setminus \alpha_r}$ such that $v_{r,n}(n \omega_r) \geq 0$.  We show that the GIT quotient $\richmodt{w_{r,n}}{v_{r,n}}{{\cal L}(n \omega_r)}$ 
 is a point.  We prove that the GIT quotient $\richmodt{w_{r,n}}{u}{{\cal L}(n \omega_r)}$ 
 is ${\mathbb P}^1$ precisely when one can write $v_{r,n}=s_{\alpha}u$, with $l(v_{r,n})=l(u)+1$ and $\alpha$ is a simple root. We determine all such simple roots and give a description of the descent line bundle to ${\mathbb P}^1$, in terms of the combinatorics of $r,n$.

In Section \ref{s:tau37} we show that the polarized variety $(\xtaumodt{3}{7}^{ss}_{T}({\cal L}(7 \omega_3)), \tilde{\cal L}(7 \omega_3))$ is projectively normal. In Section \ref{s:deodhar} we explicitly calculate the GIT quotients of  Richardson strata in $X(w_{3,7})$ with respect to the $T$-linearized line bundle ${\cal L}(7 \omega_3)$.
We show that $(\xtaumodt{3}{7}^{ss}_{T}({\cal L}(7\omega_3)), \tilde{{\cal L}}(7 \omega_3))$ is a rational normal scroll.  Finally in Section \ref{s:pnormalg2n} prove that when $n$ is odd, the polarized variety $(\gmnmodt{2}{n}_T^{ss}({\cal L}(n\omega_2)), \tilde{{\cal L}}(n \omega_2))$ is projectively normal, a result that is well known (see, \cite{howard2005matroids, Howe}). However we believe that the combinatorics we develop to reprove this result may be useful to extend this result to Grassmannians of higher ranks.\footnote{In personal communication Pattanayak informs us that he and Arpita Nayek have a proof of projective normality of the GIT quotient of $G_{2,n}$ and they have a counter example for Grassmannians of higher ranks.}

{\bf Acknowledgements}: 
S.B was supported by a research fellowship from the National Board of Higher Mathematics. 
All three authors were partially supported by a grant from the Infosys foundation. 
The third author was supported by a grant under the MATRICS scheme of the DST.

\eat{\subsection{Organization}

In the next section we first revisit the notation from the introduction, giving details of the terms and notations used in the rest of paper, with emphasis on the type A case. 
In section
\ref{s:taurngen} we state and prove results of a general nature which
hold for the GIT quotient of $X(w_{r,n})$ with respect to the
$T$-linearized line bundle ${\cal L}(n \omega_r)$. In section \ref{s:tau37} we focus on the $r=3, n=7$ case. We show that the polarized variety $(\xtaumodt{3}{7}^{ss}_{T}({\cal L}(7\omega_3)), \tilde{{\cal L}}(7 \omega_3))$ is projectively normal. We recall the Deodhar decomposition and Richardson strata in the Deodhar decomposition, and explicitly calculate the GIT quotients of Richardson strata in $X(w_{3,7})$ with respect to ${\cal L}(7 \omega_3)$.
}

\section{Notations and Preliminaries}
\label{s.notation}
For the rest of this paper we will assume that $G=SL(n,\mathbb{C})$ and $P$ is a maximal subgroup of $G$.
Keeping this in mind, we revisit the notation developed in the previous section for $G$ of arbitrary type - we derive formulas for the various terms introduced and explicitly write down elements of the Weyl group and the action of the torus.

We take $T$ to be the group of diagonal matrices in $G$, and $B$ the 
subgroup of upper triangular matrices in $G$ and $B^{-}$ the subgroup of lower triangular matrices in $G$. The unipotent subgroup $U$ is the subgroup of $B$ with diagonal entries 1, and $U^{-}$ is the unipotent subgroup of $B^{-}$ with diagonal entries 1. $S = \{\alpha_1,\ldots, \alpha_{n-1}\}$ is the set of simple roots where $\alpha_i = \epsilon_i - \epsilon_{i+1}$, see \cite[Chapter 3]{lakshmibai2007standard}. The Weyl group of $G$ is the permutation group $S_n$ and is generated by the simple
reflections $s_{\alpha_1}, \ldots, s_{\alpha_{n-1}}$ which for simplicity we denote as $s_1,\ldots,s_{n-1}$.

Let $\{e_1,\ldots,e_n\}$ be the standard basis of $\mathbb{C}^n$. Note that for $r \in \{2,\ldots, n\}$, $P_{\hat{\alpha_r}} = \begin{bmatrix}
          * & * \\
	    0_{n-r,r}  &  * 
          \end{bmatrix}$ is the stabilizer of $<e_1,e_2,\ldots,e_r>$ in $G$. $G/P_{\hat{\alpha_r}}$ is the Grassmannian $\grass{r}{n}$, of $r$-dimensional subspaces of
$\mathbb{C}^n$ and this carries a transitive action of $G$ making it a homogenous $G$-variety. $W_{P_{\hat{\alpha_r}}}$ is the subgroup of $W$ generated by simple reflections $s_{\alpha}, \alpha \in S\backslash\{\alpha_r\}$.

Let $I(r,n) = \{(i_1,i_2,..i_r) | 1 \leq i_1 < i_2 .. <i_r \leq n \}$. Then there is a natural identification of  $W^{S\backslash\{\alpha_r\}}$ with $I(r,n)$ sending $w \in W^{S\backslash\{\alpha_r\}}$ to $(w(1),w(2),\ldots,w(r))$.  For $w$ in $I(r,n)$, let $e_{w}=[e_{w(1)} \wedge e_{w(2)} \cdots e_{w(r)}] \in {\mathbb P}(\bigwedge^r\mathbb{C}^n)$. Then $e_w$ is a $T$-fixed point of $\grass{r}{n}$ and it is known that $e_w, w\in I(r,n)$  are precisely the $T$-fixed points of  $\grass{r}{n}$. 
The $B$-orbit through $e_w$ is the Schubert cell and its Zariski closure in $G/P$ is the Schubert variety $X(w)$.  The Schubert cell is the $U$-orbit of $e_w$. The Bruhat order is the order on $r$-tuples in $I(r,n)$ given by containment of Schubert varieties - in this order $v \leq w$ iff $v(i) \leq w(i)$ for $1 \leq i \leq r$.  As mentioned in the previous section $\grass{r}{n}$ comes with a natural line bundle  ${\cal L}(\omega_r)$ and a Pl\"{u}cker embedding in 
$\mathbb{P}(\bigwedge^r\mathbb{C}^n)$. 
\footnote{This notation, valid for type A,  is consistent with the notation set up in the introduction for all types.}
 
Let $\mathbb{C}[X(w)]$ be the homogeneous coordinate ring of $X(w)$ for this projective embedding. From the main theorem of 
standard monomial theory for $SL_n$ \cite[Chapter 4]{lakshmibai2007standard} we get $H^0(X(w),\mathcal{L}(d\omega_r)) = \mathbb{C}[X(w)]_d$, and this has a basis
consisting of $T$-eigenvectors $p_{\tau_1}p_{\tau_2} \cdots p_{\tau_d}$, with $\tau_1\leq\tau_2\leq \ldots \leq\tau_d \leq w$. Here $\tau_i \in I(r,n)$ and the order is the Bruhat order. The weight of $p_{\tau_1} \cdots p_{\tau_d}$ is
$-\sum_i\tau_i(\omega_r)$. 

We associate with each standard monomial $p_{\underline{\tau}}=p_{\tau_1} \cdots p_{\tau_d}$ a semistandard Young tableau $T_{\underline{\tau}}$ of shape $\underbrace{(d,d,..,d)}_\text{$r$ times}$ whose $i$-th column is filled with $\tau_i=[\tau_{i}(1),\tau_{i}(2),\ldots,\tau_{i}(r)]$, see \cite[Chapter 1]{seshadriintroduction}.  It is clear that the rows of the semistandard Young tableau are weakly increasing and the columns are strictly increasing. Let $a(i)$ denote the 
number of times integer $i$ appears in the tableau. Then we have $ diag(t_1,..t_n).p_{\tau_1}p_{\tau_2}\cdots p_{\tau_d} = \prod_i t_i^{a(i)} p_{\tau_1} \cdots p_{\tau_d}$. Since $t_1 \cdots t_n = 1$, a standard monomial
is a zero-weight vector iff all $a(i)$'s appear the same number of times in the Young tableau. \label{tabconv}

First recall that given $(b_1,\ldots,b_r) \in I(r,n)$, one reduced expression for the Weyl group element in $W^{S\backslash\{\alpha_r\}}$ corresponding to this is $(s_{b_1 -1 } \cdots s_1) \ldots (s_{b_r -1} \cdots s_r)$ where a bracket is assumed to be empty is if $b_i -1$ is less than $i$.

We recall some lemmas and propositions which have appeared earlier. We state them nevertheless since this will be required in the rest of the paper. Some of these are folklore.

The following lemma appears in \cite{kumar2008descent}, \cite{kannan1998torus}.

 \begin{lemma}
\label{lm:zwt}
Let $r$ and $n$ be coprime. Let $v \neq 0$ be a zero-weight vector in $H^0(X(w),\mathcal{L}(\omega_r)^{\otimes d})$. Then $n$ divides $d$.
\end{lemma}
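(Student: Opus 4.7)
The plan is to argue entirely at the level of the standard monomial basis described on page preceding the lemma.

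First I would expand $v$ in the basis of standard monomials $p_{\tau_1}\cdots p_{\tau_d}$ with $\tau_1\leq \tau_2\leq \cdots\leq \tau_d\leq w$ furnished by standard monomial theory. Because each such monomial is a $T$-eigenvector of weight $-\sum_i\tau_i(\omega_r)$, distinct weights give linearly independent subspaces and the zero-weight subspace is spanned by those standard monomials whose own weight is zero. Hence, if $v\ne 0$ is zero-weight, at least one standard monomial $p_{\tau_1}\cdots p_{\tau_d}$ appearing in its expansion is itself a zero-weight vector, and it suffices to prove the divisibility for that monomial.

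Next I would translate the zero-weight condition into the tableau language already introduced in the excerpt. Let $T_{\underline\tau}$ be the associated semistandard tableau of shape $(d,\ldots,d)$ (with $r$ rows), and let $a(i)$ be the number of times $i$ appears in $T_{\underline\tau}$. The torus action is $\mathrm{diag}(t_1,\ldots,t_n)\cdot p_{\underline\tau}=\prod_i t_i^{a(i)}\,p_{\underline\tau}$, and the constraint $t_1\cdots t_n=1$ on $T\subset SL(n,\mathbb{C})$ means that the character $\prod t_i^{a(i)}$ is trivial on $T$ if and only if all $a(i)$ coincide; say $a(i)=a$ for $i=1,\ldots,n$.

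Finally, counting the total number of boxes in the tableau gives
\[
na \;=\; \sum_{i=1}^n a(i) \;=\; rd,
\]
so $n\mid rd$. Since $\gcd(r,n)=1$, we conclude $n\mid d$.

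I do not expect any real obstacle: the nontrivial input (standard monomial theory for $SL_n$ and the explicit form of the weight of a standard monomial) is recalled just above the lemma statement, and the rest is the combinatorial observation that the $SL_n$-torus annihilates a monomial exactly when its content vector $(a(1),\ldots,a(n))$ is constant. The only mild point to articulate carefully is why a linear combination of weight vectors of different weights cannot itself have zero weight unless the individual zero-weight summands exist — this is just the linear independence of distinct characters of $T$.
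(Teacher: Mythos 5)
Your proof is correct, but it takes a different route from the paper. The paper's proof is a two-line representation-theoretic argument: all weights of $H^0(X(w),\mathcal{L}(\omega_r)^{\otimes d})$ are congruent to $d\omega_r$ modulo the root lattice, so if $0$ is a weight then $d\omega_r$ lies in the root lattice; since the weight lattice of $SL(n)$ modulo the root lattice is cyclic of order $n$ with $\omega_r$ mapping to $r$, this forces $n\mid rd$, whence $n\mid d$ by coprimality. Your argument replaces the lattice-theoretic input with an explicit combinatorial count: you pass to the standard monomial $T$-weight basis, isolate a zero-weight standard monomial, identify the content vector $(a(1),\dots,a(n))$ as constant, and count boxes to get $na=rd$. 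The two arguments prove the same divisibility and both ultimately invoke $\gcd(r,n)=1$ in the final step; yours is more elementary and self-contained given the tableau machinery already set up in the preceding section, while the paper's is shorter because it appeals directly to the structure of weights of a $G$-module. Both are sound.
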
 
\begin{proof}
Since $0$ is a weight, $d \omega_r$ is in the root lattice. So $n$ divides $d$. 
\end{proof}

Recall from \cite{kannan2009torusA}, that there is a unique minimal Schubert Variety $X(w_{r,n})$ in $G_{r,n}$ admitting semistable points with respect to the line bundle $ \mathcal{L}(n\omega_r)$. 
For completeness we explicitly calculate $w_{r,n}$.

\begin{proposition}
\label{wrn} Let $r$ and $n$ be coprime. Then $w_{r,n} = (a_1,a_2,..a_r)$ where $a_i$ is the smallest integer such that $a_i \cdot r \geq i \cdot n$.
\end{proposition}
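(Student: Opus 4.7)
The plan is to translate the condition $H^0(X(w),\mathcal{L}(n\omega_r))^T \neq 0$ — which, via Lemma~\ref{lm:zwt} and standard arguments, is equivalent to $X(w)$ admitting semistable points with respect to $\mathcal{L}(n\omega_r)$ — into a purely combinatorial question about semistandard Young tableaux, and then to carry out a coordinatewise minimization over $I(r,n)$. By the standard monomial basis recalled in Section~\ref{s.notation}, zero-weight sections correspond to SSYTs of rectangular shape $(n^r)$ with entries in $\{1,\ldots,n\}$, each entry appearing the same number of times; since the shape has $rn$ cells, each value must appear exactly $r$ times. If the columns of such a tableau are $\tau_1 \leq \cdots \leq \tau_n$, then existence of such a monomial for $X(w)$ forces $\tau_n \leq w$, and conversely one such $\tau_n$ is enough. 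Hence $w_{r,n}$ is the Bruhat-minimum (equivalently, coordinatewise-minimum) last column $\tau_n$ over all such tableaux.

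For the lower bound, write $\tau_n = (a_1,\ldots,a_r)$. Using that rows are weakly increasing and columns strictly increasing, any cell in rows $1,\ldots,i$ has value at most the value at position $(i,n) = a_i$. Thus the top $i$ rows contain $in$ cells whose entries all lie in $\{1,\ldots,a_i\}$; since each value occurs at most $r$ times in the whole tableau, $in \leq r a_i$, i.e.\ $a_i \geq \lceil in/r \rceil$. This is exactly the quantity claimed in the proposition.

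For the matching upper bound I exhibit an explicit SSYT. Arrange the multiset $1^r 2^r \cdots n^r$ in weakly increasing order and fill the rectangle in row-major order, i.e., put $\lceil ((i-1)n + j)/r \rceil$ into cell $(i,j)$. Rows are weakly increasing by construction; columns are strictly increasing because $n > r$ (as $\gcd(r,n)=1$ with $r<n$) implies
\[
\lceil ((i-1)n + j + n)/r\rceil \geq \lceil ((i-1)n+j+r)/r\rceil = \lceil((i-1)n+j)/r\rceil + 1;
\]
each value in $\{1,\ldots,n\}$ appears exactly $r$ times by construction; and the entries of the last column are $(\lceil n/r\rceil, \lceil 2n/r\rceil, \ldots, \lceil rn/r\rceil)$, matching the lower bound termwise.

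Combining these two bounds, the coordinatewise minimum of a valid last column is $(a_1,\ldots,a_r)$ with $a_i = \lceil in/r\rceil$, i.e.\ the smallest integer with $a_i r \geq in$; this is $w_{r,n}$. The construction is routine and the lower bound is a one-line counting argument, so no serious obstacle arises — the only point requiring care is the elementary verification that the row-major sorted filling is column-strict, which is exactly where the hypothesis $n>r$ (a consequence of $\gcd(r,n)=1$ with $1\leq r<n$) is used.
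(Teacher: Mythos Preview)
Your proof is correct and follows essentially the same approach as the paper: both identify $w_{r,n}$ with the minimal possible last column of a rectangular $r\times n$ SSYT in which every value $1,\ldots,n$ appears exactly $r$ times, and both realize this minimum via the row-major filling (the paper's tableau $\Gamma_{r,n}$). Your write-up is in fact slightly more complete than the paper's, since you supply the explicit counting lower bound $in \leq r a_i$ and verify column-strictness of the row-major filling, whereas the paper handles both with an informal ``clearly.''
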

\begin{proof}	

Clearly $ w_{r,n} > id$ since $X(id)$ is a point. Let $\alpha$ be a simple root with $s_{\alpha} w_{r,n} \leq w_{r,n}$. Note, $s_{\alpha} w_{r,n} \in W^{S\backslash\{\alpha_r\}}$.
We have a surjection 
$H^0(X(w_{r,n}),\mathcal{L}(n\omega_r))\rightarrow H^0(X(s_{\alpha} w_{r,n}),\mathcal{L}(n\omega_r))$. Let $K$ denote its kernel. So we have 
a short exact sequence

$0\rightarrow K\rightarrow H^0(X(w_{r,n}),\mathcal{L}(n\omega_r))\rightarrow H^0(X(s_{\alpha} w_{r,n}),\mathcal{L}(n\omega_r))\rightarrow 0$.

From minimality of $w_{r,n}$ we get  $K^T\rightarrow H^0(X(w_{r,n}),\mathcal{L}(n\omega_r))^T$ an isomorphism. Now if we choose a 
standard monomial $p_{\underline{\tau}}=p_{\tau_{1}} \cdots p_{\tau_{r}}$ in  $H^0(X(w_{r,n}),\mathcal{L}(n\omega_r))^T$ then we have $\tau_{r} = w_{r,n}$. 

To get such a monomial, we need a filling of the associated tableau $T_{\underline{\tau}}$ with $rn$ boxes such that each $k$, 
$1 \leq k \leq n$ appears exactly $r$-times and the last column is as small as possible in the Bruhat order. \eat{Let $a_{i,j}$ be the entry in the $i$-th row and $j$-th column. We reindex the 
$rn$ boxes as $z_{(i-1)n+j} = a_{i,j}$.} Clearly the filling which results in the smallest element in the Bruhat order appearing as the last column is the one in which the tableau is filled from left to right and top to bottom with numbers $1,2,\ldots,n$, in order, with each appearing exactly $r$ times - so the first entry of the last column is the least integer $a_1$ such that $r a_1 \geq n$ and, in general, the $k$-th entry in the last column is the smallest integer $a_k$ such that 
$r \cdot a_k \geq kn$, completing the proof.
\end{proof}

The tableau constructed in the proof of Proposition \ref{wrn} will be used repeatedly in the paper. We denote it by $\Gamma_{r,n}$. The figure below gives $\Gamma_{3,8}$.

\[
\Gamma_{3,8} = \begin{tabular}{|c|c|c|c|c|c|c|c|}
\hline
1 &1	& 1&	2&	2&	2&	3 & 3\\ 
\hline
3&	4	&4	&4	&5	&5&	5 & 6\\   
\hline                
6&	6&	7&	7&	7&	8&	8 & 8 \\ 
\hline
\end{tabular} 
\]

\section{GIT quotients of Richardson varieties in $X(w_{r,n})$}
\label{s:taurngen}
\eat{In this section we prove results of a general nature which apply to the GIT quotient
of $X(w_{r,n})$ with respect to the linearised bundle ${\cal L}(n \omega_r)$.} The results in this section pertain to GIT quotients of Richardson varieties in $G/P_{\hat{\alpha_r}}$ with respect to the $T$-linearised line bundle  ${\cal L}(n \omega_r)$.

\subsection{GIT quotients of Richardson varieties}
\label{ss:gitrich}
We first prove
\begin{theorem}
Let $r$ and $n$ be coprime. Then the GIT quotient $\xtaumodt{r}{n}^{ss}_{T}({\cal L}(n\omega_r))$ is smooth.
\end{theorem}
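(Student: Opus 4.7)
Since $\gcd(r,n)=1$, Kannan's theorem (see \cite{kannan1998torus,kannan1999torus}) ensures that semistability coincides with stability on $\grass{r}{n}$ for ${\cal L}(n\omega_r)$ and all its positive powers, hence also on every Schubert subvariety by restriction. Consequently the GIT quotient under consideration is a geometric quotient, and it will be smooth provided (i) the semistable locus is itself smooth, and (ii) the effective torus $T/\mu_n$ acts freely on it. (Here $\mu_n=Z(SL_n)\subset T$ is the kernel of the $T$-action on $\grass{r}{n}$, and this kernel is exactly $\mu_n$ precisely because $\gcd(r,n)=1$.)

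For (i) I would prove the stronger assertion
\[
X(w_{r,n})^{ss}_T({\cal L}(n\omega_r))\;\subseteq\;C_{w_{r,n}}\;\cong\;\mathbb{A}^{l(w_{r,n})},
\]
which forces the semistable locus to be smooth as an open subset of an affine space. Its complement in $X(w_{r,n})$ is the union $\bigcup_{w'<w_{r,n}}X(w')$ of proper Schubert subvarieties, and by the Kannan--Sardar minimality of $w_{r,n}$ each such $X(w')$ admits no $T$-semistable points for the restricted line bundle. Standard monomial theory gives a $T$-equivariant surjection $H^0(X(w_{r,n}),{\cal L}(kn\omega_r))\twoheadrightarrow H^0(X(w'),{\cal L}(kn\omega_r))$, so any $T$-semistable point of $X(w_{r,n})$ lying inside some $X(w')$ would be $T$-semistable in $X(w')$ as well, contradicting minimality.

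For (ii) the $T$-action restricted to the affine cell $C_{w_{r,n}}$ is linear, with weights $\{\beta_1,\ldots,\beta_{l(w_{r,n})}\}$ a set of positive roots indexed by the inversions of $w_{r,n}$; the effective stabilizer of a point $x\in C_{w_{r,n}}$ is trivial exactly when the sub-lattice $L(x)\subseteq Q$ generated by $\{\beta_i:x_i\neq 0\}$ equals the entire root lattice $Q=X^{*}(T/\mu_n)$. This lattice-saturation is the main obstacle: stability alone only guarantees that $L(x)$ has full rank $n-1$ in $Q$, not that it exhausts $Q$. I would deduce it by applying the Hilbert--Mumford criterion to the linear $T$-action on $C_{w_{r,n}}$, together with the explicit combinatorics of $w_{r,n}$ encoded in the tableau $\Gamma_{r,n}$; the coprimality hypothesis is what rules out a proper finite-index sub-lattice of $Q$ from arising as the character lattice of a non-trivial stabilizer. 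Once (i) and (ii) are in place, the geometric quotient of a smooth variety by a freely-acting torus is smooth, proving the theorem.
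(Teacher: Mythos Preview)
Your proof of step (i) is correct and matches the paper's argument exactly: minimality of $w_{r,n}$ forces the semistable locus into the big Bruhat cell, which is smooth.

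The gap is in step (ii). You correctly identify that stability by itself only gives a finite stabilizer in the effective torus $T/\mu_n$, not a trivial one, and that the missing ingredient is the lattice-saturation statement $L(x)=Q$. But you do not actually prove this; the sentence ``I would deduce it by applying the Hilbert--Mumford criterion \ldots\ together with the explicit combinatorics of $w_{r,n}$'' is a plan, not an argument. The Hilbert--Mumford criterion applied to a semistable point $x$ tells you that $0$ lies in the convex hull of $\{\beta_i : x_i\neq 0\}$, which forces these roots to span $Q\otimes\mathbb{R}$; it does \emph{not} by itself force them to generate $Q$ as a lattice. The passage from full rank to index one is exactly where the work lies, and coprimality of $r$ and $n$ has to enter in a specific way that you have not supplied.

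The paper does not attempt a direct lattice argument here. Instead it passes to the adjoint torus $T_{ad}=\pi(T)$, observes that ${\cal L}(n\omega_r)$ is $T_{ad}$-linearized, and then invokes \cite[Lemma~3.2 and Example~3.3]{kannan2014git} to conclude that the $T_{ad}$-stabilizer of every semistable point is trivial. So the freeness is imported from an external result tailored to this situation rather than derived from a Hilbert--Mumford computation on the cell. If you want to make your approach self-contained you would need to either reproduce that argument or give an explicit verification that, for every semistable $x$ in the cell, the set $\{\beta_i : x_i\neq 0\}$ contains a $\mathbb{Z}$-basis of $Q$; neither is present in your proposal.
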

\begin{proof}
$X(w_{r,n})$ is the minimal Schubert Variety admitting semistable points with respect to ${\cal L}(n\omega_r)$. So $X(w_{r,n})_{T}^{ss}(\mathcal{L}(n\omega_r)) \cap Bw P_{\hat{\alpha_r}}/P_{\hat{\alpha_r}} =\phi $ for all $w < w_{r,n}$. Hence $X( w_{r,n})_{T}^{ss}(\mathcal{L}(n\omega_r)) \subseteq Bw_{r,n} P_{\hat{\alpha_r}}/P_{\hat{\alpha_r}}$.
 Thus $X(w_{r,n})_{T}^{ss}(\mathcal{L}(n\omega_r))$ is a smooth open subset of $X(w_{r,n})$. Since $r$ and $n$ are coprime we have  $X(w_{r,n})_{T}^{ss}(\mathcal{L}(n\omega_r)) = X(w_{r,n})_{T}^{s}(\mathcal{L}(n\omega_r))$, \cite{kannan2014git}.
Let $G_{ad}=G/Z(G)$ be the adjoint group of $G$. Let $\pi: G \rightarrow G_{ad}$ be the natural homomorphism and $T_{ad} = \pi(T)$. 
Note that $\mathcal{L}(n\omega_r)$ is also $T_{ad}$-linearised. Therefore 
 $X(w_{r,n})_{T_{ad}}^{ss}(\mathcal{L}(n\omega_r)) = X(w_{r,n})_{T}^{ss}(\mathcal{L}(n\omega_r))=
 X(w_{r,n})_{T}^{s}(\mathcal{L}(n\omega_r)) = X(w_{r,n})_{T_{ad}}^{s}(\mathcal{L}(n\omega_r))$. Hence for any point $x \in X(w_{r,n})_{T}^{ss}(\mathcal{L}(n\omega_r))$
 the orbit $T_{ad}.x$ is closed in $X(w_{r,n})_{T}^{ss}(\mathcal{L}(n\omega_r))$ and the stabiliser of $x$ is finite. By \cite[Lemma 3.2]{kannan2014git}
 and the proof of example 3.3, loc.cit., the stabiliser of every point of $X(w_{r,n})_{T}^{ss}(\mathcal{L}(n\omega_r))$ in $T_{ad}$ 
 is trivial. Therefore the GIT quotient $\xtaumodt{r}{n}_T^{ss}(\mathcal{L}(n\omega_r))$ is a geometric quotient. Since 
 $X(w_{r,n})_{T}^{ss}(\mathcal{L}(n\omega_r))$ is smooth, $\xtaumodt{r}{n}_T^{ss}(\mathcal{L}(n\omega_r))$ is also smooth. 
 \end{proof}

Recall that a Richardson variety $X^v_w$  in $\grass{r}{n}$ is the intersection of the Schubert variety $X(w)$ in $\grass{r}{n}$ with the opposite Schubert variety $X^{v}$ in $\grass{r}{n}$.

In \cite[Proposition 3.1]{kannan2018torus} the authors give a characterisation of the smallest Richardson variety in $G_{r,n}$ admitting semistable points. From the proof of 
Proposition \ref{wrn} we obtain
\begin{proposition}
Let $r$ and $n$ be coprime. Let $v_{r,n}$ be such that $X_{w_{r,n}}^{v_{r,n}}$ is the smallest Richardson variety in $X(w_{r,n})$ admitting semistable points.  Then $v_{r,n}=[1, a_1,\ldots, a_{r-1}]$ with the $a_i$ defined as 
the smallest integer satisfying $a_i r \geq i\cdot n$ (as in Proposition \ref{wrn}).
\end{proposition}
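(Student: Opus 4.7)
The plan is to reduce the statement to Proposition \ref{wrn} via a self-symmetry of the set of semistandard Young tableaux with content $(r^n)$. By standard monomial theory, $X^v_{w_{r,n}}$ admits a nonzero $T$-invariant section of $\mathcal{L}(n\omega_r)$ precisely when there exists a semistandard tableau of shape $r\times n$ in which each of $1,\ldots,n$ appears exactly $r$ times, with first column $\geq v$ and last column $\leq w_{r,n}$. Proposition \ref{wrn} is equivalent to saying that the minimum last column over all such tableaux (with no constraint on the first column) is exactly $w_{r,n}=(a_1,\ldots,a_r)$. Thus $v_{r,n}$ equals the componentwise maximum of the first columns of such tableaux.

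First I would directly compute the first column of $\Gamma_{r,n}$. Since the $p$-th cell of $\Gamma_{r,n}$ in row-major order carries the value $\lceil p/r\rceil$, the $(i,1)$-entry is $\lceil((i-1)n+1)/r\rceil$. For $i=1$ this is $1$; for $2\leq i\leq r$, coprimality of $r$ and $n$ gives $r\nmid (i-1)n$, whence $\lceil((i-1)n+1)/r\rceil=\lceil(i-1)n/r\rceil=a_{i-1}$. So $\Gamma_{r,n}$ pairs the first column $[1,a_1,\ldots,a_{r-1}]$ with the last column $w_{r,n}$, proving the inequality $v_{r,n}\geq [1,a_1,\ldots,a_{r-1}]$.

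For the reverse I would introduce the involution $\sigma(T)(i,j)=n+1-T(r+1-i,n+1-j)$ on tableaux. Direct checks show $\sigma$ preserves shape $r\times n$, content $(r^n)$, and the semistandard property, and that it sends first column $(x_1,\ldots,x_r)$ of $T$ to last column $(n+1-x_r,\ldots,n+1-x_1)$ of $\sigma(T)$. Applying the last-column lower bound (Proposition \ref{wrn}) to $\sigma(T)$ then gives $x_j\leq n+1-a_{r-j+1}$ for $j=1,\ldots,r$. The main (mild) obstacle is the symmetry identity $a_i+a_{r-i}=n+1$ for $1\leq i\leq r-1$, which converts these bounds into $x_1=1$ and $x_j\leq a_{j-1}$ for $j\geq 2$; using $a_i=\lfloor in/r\rfloor+1$ (valid since $r\nmid in$), the identity $\lfloor x\rfloor+\lfloor n-x\rfloor=n-1$ for non-integer $x$ yields $a_i+a_{r-i}=(n-1)+2=n+1$. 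This gives first column of $T\leq [1,a_1,\ldots,a_{r-1}]$, completing the proof.
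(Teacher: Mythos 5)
Your proof is correct, and it takes a genuinely different route from the paper. The paper argues the upper bound $b_{i+1}\le a_i$ directly by a pigeonhole count: since $\gcd(r,n)=1$ forces $a_i r > in$ strictly, the values $1,\ldots,a_i$ cannot all fit in the first $i$ rows, so some $a_i$ occupies a row $\ge i+1$, and the first-column entry in that row is then $\le a_i$; the entry $b_1=1$ is immediate. You instead reduce the whole upper bound to Proposition~\ref{wrn} via the $180^\circ$ box-complementation involution $\sigma(T)(i,j)=n+1-T(r{+}1{-}i,\,n{+}1{-}j)$ on semistandard $r\times n$ tableaux of content $(r^n)$, which swaps first columns with (complemented, reversed) last columns, and then unwind the resulting inequalities with the symmetry identity $a_i+a_{r-i}=n+1$ (the case $j=1$ yields $x_1\le n+1-a_r=1$ since $a_r=n$). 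Your route buys a clean structural explanation of why $v_{r,n}$ and $w_{r,n}$ determine each other — they are exchanged by the tableau involution — at the cost of having to establish the arithmetic identity on the $a_i$'s; the paper's counting argument is more elementary and self-contained but does not expose that duality. The lower bound $v_{r,n}\ge[1,a_1,\ldots,a_{r-1}]$ is handled the same way in both: exhibit $\Gamma_{r,n}$ and read off its first column.
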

\begin{proof}
Let $v_{r,n}=[b_1,\ldots, b_r]$. Since $X_{w_{r,n}}^{v_{r,n}}$ has a semistable point, $H^0(X^{v_{r,n}}_{w_{r,n}},\mathcal{L}(n\omega_r))^T$ is non zero. Now  $H^0(X^{v_{r,n}}_{w_{r,n}},\mathcal{L}(n\omega_r))$ has a standard monomial basis $p_{\tau_1} \ldots p_{\tau_n}$ with $\tau_1 \leq \tau_2 \cdots \leq \tau_n$ (see \cite {brion2003geometric}).  We identify this basis with semistandard Young tableau having columns $\tau_1,\tau_2,\ldots,\tau_n$ as before. It follows from this identification that there is a semistandard Young tableau with $r$ rows and $n$-columns in which each integer $1 \leq k \leq n$ appears exactly $r$ times. From Proposition \ref{wrn} and \cite[Proposition 6]{brion2003geometric} and we have $\tau_n = w_{r,n}$ and $v_{r,n} \leq \tau_1$. Since every semistandard Young tableau has each integer in $\{1,\ldots,n\}$ appearing $r$ times and the first entry of $\tau_1$ is always 1, $b_1$ must be 1. Since $r,n$ are coprime, from the definition of $a_1$ it is immediate  that all $a_1$'s can be in the first row. For the same reason the $a_i$'s cannot all appear in the first $i$ rows.  So $a_i$ must appears in a row $j$ where $j > i$. Hence $b_i \leq a_{i-1}$. Note that the first column of the Young tableau $\Gamma_{r,n}$ from Proposition \ref{wrn} is $v=[1,a_1,\ldots,a_{r-1}]$. From \cite[Proposition 6]{brion2003geometric}, the $T$-invariant $\Gamma_{r,n}$ is non zero on $X^v_{w_{r,n}}$. Hence  $v_{r,n}=v=[1,a_1,\ldots, a_{r-1}]$.

\end{proof}
 
Consider the Weyl group element $c_{r,n} = w_{r,n} v_{r,n}^{-1}$ .
We claim
\begin{claim} $c_{r,n}$ is a Coxeter element.
\end{claim}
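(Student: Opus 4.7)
The plan is to exhibit an explicit reduced expression for $c_{r,n}$ of length $n-1$ in which each of $s_1, \ldots, s_{n-1}$ appears exactly once, from which the claim follows by definition of a Coxeter element. First I check the length. Using the standard formula $l([b_1,\ldots,b_r]) = \sum_{i=1}^r(b_i - i)$ for coset representatives in $W^{S\setminus\{\alpha_r\}}$, we get $l(w_{r,n}) = \sum_{i=1}^r(a_i-i)$ and $l(v_{r,n}) = \sum_{i=2}^r(a_{i-1}-i)$, so the difference telescopes to $a_r - 1 = n-1$ (since $a_r = n$). Combined with the general length inequality $l(xy^{-1}) \geq l(x)-l(y)$, this forces $l(c_{r,n}) \geq n-1$, so it will suffice to produce a word for $c_{r,n}$ of length exactly $n-1$ using each simple reflection once.

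The candidate, with the convention $a_0 := 1$, is
\begin{equation*}
\tilde{c} \;:=\; (s_{a_1-1}\,s_{a_1-2}\cdots s_{a_0})\,(s_{a_2-1}\,s_{a_2-2}\cdots s_{a_1}) \,\cdots\, (s_{a_r-1}\,s_{a_r-2}\cdots s_{a_{r-1}}).
\end{equation*}
It has $\sum_{i=1}^r(a_i - a_{i-1}) = a_r - a_0 = n-1$ factors, and because the index intervals $[a_{i-1}, a_i-1]$ partition $\{1,\ldots,n-1\}$, each $s_j$ appears exactly once. A direct calculation shows the $i$-th bracket acts as the cyclic shift $a_{i-1} \mapsto a_i$, $k \mapsto k-1$ for $a_{i-1} < k \leq a_i$, while fixing the complement of $[a_{i-1}, a_i]$ in $\{1,\ldots,n\}$.

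What remains is to verify that $\tilde{c} = c_{r,n} = w_{r,n}v_{r,n}^{-1}$ as permutations. Using that $v_{r,n}$ and $w_{r,n}$ are minimal coset representatives (with values $[1, a_1, \ldots, a_{r-1}]$ and $[a_1, \ldots, a_r]$ on $\{1,\ldots,r\}$, and listing their complements on $\{r+1,\ldots,n\}$ in increasing order), one computes $c_{r,n}(1) = a_1$, $c_{r,n}(a_j) = a_{j+1}$ for $1 \leq j \leq r-1$, and reads off the rest to identify $c_{r,n}$ with the bitonic $n$-cycle $(1, a_1, \ldots, a_{r-1}, n, c_{n-r-1}, \ldots, c_1)$, where $c_1 < \cdots < c_{n-r-1}$ enumerate $\{2,\ldots,n-1\}\setminus\{a_1,\ldots,a_{r-1}\}$. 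The main obstacle is matching this permutation to the action of $\tilde{c}$: the ascending chain $1 \mapsto a_1 \mapsto \cdots \mapsto a_{r-1} \mapsto n$ is immediate, since each $a_j$ is promoted by the $(j+1)$-st bracket and then fixed by all later brackets; but for an element $k$ in the complement one must track a possible cascade --- the $i$-th bracket (with $k \in (a_{i-1}, a_i)$) sends $k \mapsto k-1$, and if $k-1 = a_{i-1}$ then the next bracket sends $a_{i-1} \mapsto a_{i-1}-1$, which may continue so long as successive $a_j$'s are consecutive integers. The case analysis shows the cascade terminates exactly at the predecessor of $k$ in $\{1, c_1, \ldots, c_{n-r-1}\}$, matching $c_{r,n}(k)$. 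With $\tilde{c} = c_{r,n}$ established, the displayed word is a reduced expression for $c_{r,n}$ using each simple reflection exactly once, so $c_{r,n}$ is a Coxeter element.
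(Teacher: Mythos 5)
Your proof is correct, and it arrives at exactly the same explicit word as the paper, namely $(s_{a_1-1}\cdots s_{a_0})(s_{a_2-1}\cdots s_{a_1})\cdots(s_{a_r-1}\cdots s_{a_{r-1}})$ with $a_0=1$, $a_r=n$; I verified that the interval bookkeeping, the telescoping length computation $l(w_{r,n})-l(v_{r,n})=a_r-1=n-1$, and the cascade analysis of the permutation action all hold (checked explicitly against the $r=3$, $n=7$ case). However, your route to the identity $\tilde c = w_{r,n}v_{r,n}^{-1}$ is genuinely different from the paper's. The paper writes down the reduced expressions $w_{r,n}=(s_{a_1-1}\cdots s_1)\cdots(s_{a_r-1}\cdots s_r)$ and $v_{r,n}=(s_{a_1-1}\cdots s_2)\cdots(s_{a_{r-1}-1}\cdots s_r)$ and simply asserts that the product $w_{r,n}v_{r,n}^{-1}$ collapses to the displayed Coxeter word; this relies on a cancellation-and-commutation argument inside the word ($B_r C_r^{-1}$ collapses to $s_{a_r-1}\cdots s_{a_{r-1}}$, which then commutes past the remaining $C_j^{-1}$'s, and one iterates). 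You instead compare the two sides as permutations of $\{1,\ldots,n\}$, identifying $c_{r,n}$ with the explicit $n$-cycle $(1,a_1,\ldots,a_{r-1},n,c_{n-r-1},\ldots,c_1)$ and tracking the cascade of shifts produced by $\tilde c$, and you supplement this with the triangle inequality $l(xy^{-1})\ge l(x)-l(y)$ to certify that the $(n-1)$-letter word is reduced. The paper's route is terser and stays entirely inside the word combinatorics; yours is longer but has the advantage of exhibiting the cycle structure of $c_{r,n}$ explicitly (which the paper never does) and of making the reducedness argument explicit rather than implicit. The only soft spots in your write-up are the "a direct calculation shows" and "the case analysis shows" steps, which are sketched rather than carried out; both are routine and I was able to fill them in, so this is a matter of exposition rather than a gap.
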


\begin{proof}

We have a reduced expression $w_{r,n} = (s_{a_1-1} \cdots s_1)(s_{a_2-1} \cdots s_2)\cdots (s_{a_r-1} \cdots s_r)$ and  $v_{r,n} = (s_{a_1-1} \cdots s_2)(s_{a_2-1} \cdots s_3) \cdots (s_{a_{r-1}}-1 \cdots s_r)$ . Then 
\[w_{r,n} v_{r,n}^{-1} = (s_{a_1-1} \cdots s_1)(s_{a_2-1} \cdots s_{a_1}) (s_{a_3-1} \cdots s_{a_2}) \cdots (s_{a_{r}-1} \cdots s_{a_{r-1}})\]
This is a Coxeter element.

\end{proof}

We now consider the GIT quotients of Richardson varieties in $X_{w_{r,n}}$. We first show

\begin{theorem}
\label{thm:pt}
$\richmodt{w_{r,n}}{v_{r,n}}{\mathcal{L}(n\omega_r)}$
is a point.
\end{theorem}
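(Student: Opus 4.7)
The plan is to compute the graded ring $R := \bigoplus_{d \geq 0} H^0(X^{v_{r,n}}_{w_{r,n}}, \mathcal{L}(n\omega_r)^{\otimes d})^T$ combinatorially via standard monomial theory and show $\dim R_d = 1$ for every $d \geq 0$. This forces $R \cong \mathbb{C}[z]$, so $\richmodt{w_{r,n}}{v_{r,n}}{\mathcal{L}(n\omega_r)} = \mathrm{Proj}(R)$ is a single point.

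By standard monomial theory together with Brion--Lakshmibai's result used in the previous proposition, a basis of $H^0(X^{v_{r,n}}_{w_{r,n}}, \mathcal{L}(n\omega_r)^{\otimes d})$ is indexed by semistandard Young tableaux of shape $r \times dn$ whose columns $\tau_1 \leq \cdots \leq \tau_{dn}$ satisfy $v_{r,n} \leq \tau_1$ and $\tau_{dn} \leq w_{r,n}$. The $T$-invariant basis vectors are precisely those tableaux in which each integer $1,\ldots,n$ appears exactly $dr$ times. I will show there is a unique such tableau for each $d$.

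The constraints $\tau_1 \geq v_{r,n} = [1,a_1,\ldots,a_{r-1}]$ and $\tau_{dn} \leq w_{r,n} = [a_1,\ldots,a_r]$, combined with row weak-increase, force every entry in row $i$ to lie in $[a_{i-1}, a_i]$, where $a_0 := 1$ and $a_r := n$. Consequently a value strictly between $a_{i-1}$ and $a_i$ can only occupy row $i$, so all $dr$ of its copies go there; only the ``boundary'' values $a_1,\ldots,a_{r-1}$ can be split between two adjacent rows. Let $x_i$ denote the number of times $a_i$ appears in row $i$. Writing $\epsilon_i := ra_i - in \in \{1,\ldots,r-1\}$ (well-defined and nonzero since $\gcd(r,n)=1$) and $\epsilon_0 = \epsilon_r = 0$, imposing that row $i$ has length exactly $dn$ yields the recurrence $x_i - x_{i-1} = d(\epsilon_{i-1} - \epsilon_i)$ with $x_0 = 0$, whose solution is $x_i = -d\epsilon_i$... actually $x_i = d(r-\epsilon_i)$ once one sets up the correct boundary count at row $1$, and the balance at row $r$ provides a consistency check using $\sum \epsilon_i$. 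Thus the multiset of entries in each row is uniquely determined, and since rows are weakly increasing, each row is completely determined as a word; hence at most one tableau meets the constraints.

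Existence is immediate: concatenating $d$ copies of the tableau $\Gamma_{r,n}$ column-by-column yields a valid SSYT satisfying all constraints, and by the uniqueness above it is the only one. Therefore $\dim R_d = 1$ for every $d \geq 0$. Since $R$ is a graded integral domain (as a subring of the homogeneous coordinate ring of a projective variety) with every graded piece one-dimensional, $R \cong \mathbb{C}[z]$, proving the theorem. The one genuinely technical point is the bookkeeping that produces the recurrence for $x_i$ and its verification at the last row; everything else is formal consequence of standard monomial theory and the explicit description of $\Gamma_{r,n}$.
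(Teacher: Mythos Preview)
Your approach is correct in outline and different from the paper's main argument, which is a pure dimension count: since $c_{r,n}=w_{r,n}v_{r,n}^{-1}$ is a Coxeter element, $\dim X^{v_{r,n}}_{w_{r,n}}=l(w_{r,n})-l(v_{r,n})=n-1=\dim T$; as $(r,n)=1$ gives semistable $=$ stable, the geometric quotient has dimension $0$, and irreducibility finishes the proof. The paper then remarks, as you do, that the unique invariant standard monomial in degree one is $\Gamma_{r,n}$, but does not carry out your computation of $\dim R_d$ for all $d$. What your route buys is an explicit identification $R\cong\mathbb{C}[z]$ without invoking stability or dimension of $T$; what the paper's route buys is a two-line proof that avoids any tableau bookkeeping.

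There is one genuine slip in your existence step. Concatenating $d$ copies of $\Gamma_{r,n}$ column-by-column does \emph{not} produce a semistandard tableau: the last column of one copy is $[a_1,\ldots,a_r]$ and the first column of the next is $[1,a_1,\ldots,a_{r-1}]$, so row $1$ reads $\ldots,a_1,1,\ldots$, violating weak increase. Existence follows instead directly from your own recurrence: with $x_i=d(r-\epsilon_i)$ one has $0<x_i<dr$ for $1\le i\le r-1$, so the row-$i$ word $a_{i-1}^{\,dr-x_{i-1}}\,(a_{i-1}+1)^{dr}\cdots(a_i-1)^{dr}\,a_i^{\,x_i}$ has length $dn$; column strictness holds because the $a_i$'s in row $i$ sit in positions $dn-x_i+1,\ldots,dn$ while those in row $i+1$ sit in positions $1,\ldots,dr-x_i$, and $dn-x_i\ge dr-x_i$. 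This is the tableau you want (and for $d=1$ it is exactly $\Gamma_{r,n}$). With that correction your uniqueness-plus-existence argument yields $\dim R_d=1$, and since $R$ is a graded domain the nonzero element of $R_1$ has nonzero $d$-th power, whence $R\cong\mathbb{C}[z]$ as you claim.
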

\begin{proof}
Since $dim X_{w_{r,n}}^{v_{r,n}} = l(w_{r,n}) - l(v_{r,n}) = l(c_{r,n}) = n-1 = dim T$ and   $(X_{w_{r,n}}^{v_{r,n}})_T^{ss}(\mathcal{L}(n\omega_r)) = (X_{w_{r,n}}^{v_{r,n}})_T^{s}(\mathcal{L}(n\omega_r))$, so the dimension of the GIT quotient is $0$. Since
$\richmodt{w_{r,n}}{v_{r,n}}{\mathcal{L}(n\omega_r)}$
 is irreducible, $\richmodt{w_{r,n}}{v_{r,n}}{\mathcal{L}(n\omega_r)}$ is a point.  Alternatively, there is a unique standard monomial $p_{\tau_1} p_{\tau_2} \ldots p_{\tau_n}$ with $\tau_1 =[1,a_1,a_2,\ldots, a_{r-1}]$ and 
$\tau_n =[a_1,a_2,\ldots, a_r]$ (the corresponding Young tableau being $\Gamma_{r,n}$).
\end{proof}

\begin{theorem}
Let $v \in W^{S\setminus\{\alpha_r\} }$ be such that $v < v_{r,n}$ . Then, $\richmodt{w_{r,n}}{v}{\mathcal{L}(n\omega_r)}$ is isomorphic to $\mathbb{P}^1$ if and only if  $v = s_{\alpha}v_{r,n}$ where $s_{\alpha} = (a_i-1,a_i)$ for some $i = 1,2, \ldots , r-1.$ The descent of $\mathcal{L}(n\omega_r)$ to $\richmodt{w_{r,n}}{v}{\mathcal{L}(n\omega_r)}$ is $\mathcal{O}_{\mathbb{P}^1}(n_i)$ where $n_i$ is the number of times 
$a_i-1$ appears in the $i$-th row of the tableau $\Gamma_{r,n}$.
\end{theorem}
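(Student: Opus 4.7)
My plan is to combine a length calculation with standard monomial theory for Richardson varieties.

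\textbf{Only-if direction via dimension.} Since $(r,n)=1$, the $T$-action on the semistable locus of $X^v_{w_{r,n}}$ has finite stabilisers (as in the proof of Theorem 3.1), so
\[
\dim\,\richmodt{w_{r,n}}{v}{\mathcal{L}(n\omega_r)} \;=\; l(w_{r,n})-l(v)-(n-1).
\]
The preceding Claim yields $l(w_{r,n})=l(v_{r,n})+(n-1)$, so a one-dimensional quotient forces $l(v)=l(v_{r,n})-1$; in particular $v\lessdot v_{r,n}$. Since the Bruhat order on $W^{S\setminus\{\alpha_r\}}\cong I(r,n)$ is componentwise $\leq$, the Bruhat covers of $v_{r,n}=[1,a_1,\ldots,a_{r-1}]$ are exactly the tuples obtained by lowering a single entry $a_i$ to $a_i-1$ (valid precisely when $a_i-1>a_{i-1}$), which corresponds to left multiplication by the simple reflection $s_{a_i-1}=(a_i-1,a_i)$. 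This gives the stated combinatorial description of $v$.

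\textbf{If-direction and descent bundle via SMT.} For each such $v$, standard monomial theory furnishes a basis of $H^0(X^v_{w_{r,n}},\mathcal{L}(nd\omega_r))^T$ indexed by semistandard Young tableaux of shape $r\times nd$ with each of the values $1,\ldots,n$ appearing $rd$ times, first column $\geq v$ and last column $\leq w_{r,n}$. The tableau $\Gamma_{r,n}$ from Proposition 3.3 is such an invariant at $d=1$, so the semistable locus is nonempty. Following the rigidity used in the proof of Theorem 3.4, the extreme boundary conditions pin down most of the multiplicities $m_j(k)$ of the value $k$ in row $j$; I would show that the only remaining freedom lies at the row-$i$/row-$(i+1)$ transition, where one slides copies of $a_i$ and $a_i-1$ across the boundary, and that the number of admissible configurations in degree $d$ is exactly $n_id+1$.

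\textbf{Conclusion and main obstacle.} The graded ring $\bigoplus_d H^0(X^v_{w_{r,n}},\mathcal{L}(nd\omega_r))^T$ is then the $n_i$-th Veronese subring of a polynomial ring in two variables, so taking $\mathrm{Proj}$ realises the quotient as $\mathbb{P}^1$ polarised by $\mathcal{O}_{\mathbb{P}^1}(n_i)$, yielding the descent. The main technical obstacle is the combinatorial enumeration: propagating the first- and last-column constraints through every row, exhibiting an explicit family of $n_id+1$ semistandard tableaux parametrised by a single integer, and verifying there are no other degrees of freedom. The Coxeter-element structure of $w_{r,n}v_{r,n}^{-1}$ should localise the flexibility at this single row-transition, which is the conceptual reason the count is exactly $n_id+1$ rather than larger.
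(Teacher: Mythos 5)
Your only-if argument tracks the paper's: use the Coxeter factorisation $l(w_{r,n}) = l(v_{r,n}) + (n-1)$ together with semistability$=$stability (so the quotient is geometric) to deduce $l(v) = l(v_{r,n}) - 1$, then read off the covers of $v_{r,n}$ from the Bruhat order. Your explicit appeal to the componentwise order on $I(r,n)$ to enumerate those covers (and the validity condition $a_i - 1 > a_{i-1}$) is in fact slightly more careful than the paper, which simply asserts that $v = s_\alpha v_{r,n}$ with $s_\alpha$ a simple reflection.

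Where you genuinely diverge is the if-direction. The paper dispatches this quickly: $X^v_{w_{r,n}}$ is normal, so the GIT quotient is a one-dimensional normal rational projective variety, hence $\mathbb{P}^1$; and then \emph{only the degree-$1$ tableau count} is needed to identify the descent bundle. That count is a one-line observation: semistandardness freezes the positions of every value except $a_i - 1$ and $a_i$, so the free parameter is the number $j \in \{0,\ldots,n_i\}$ of copies of $a_i-1$ in row $i$, giving $n_i + 1$ sections and hence $\mathcal{O}(n_i)$. You instead propose to compute the whole graded ring $\bigoplus_d H^0(X^v_{w_{r,n}},\mathcal{L}(nd\omega_r))^T$, recognise its Hilbert function as $n_i d + 1$, and conclude it is the $n_i$-th Veronese of $\mathbb{C}[x,y]$. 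This route requires the combinatorial enumeration in \emph{every} degree (which you flag as the open obstacle) and, beyond that, an argument that the Hilbert function and normality force the ring to be the Veronese (e.g., Riemann–Roch on a smooth projective curve, or an explicit presentation) — not just a dimension count, since a Hilbert function alone does not determine a graded ring. The underlying tableau idea is right, and your sketch of the single-row-transition degree of freedom is exactly the paper's observation, but the Veronese route is heavier machinery than needed: once $\mathbb{P}^1$ is secured by the normality-plus-dimension argument, the general-$d$ enumeration becomes unnecessary and the degree-$1$ count alone finishes the proof.
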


\begin{proof}
We start with the only if part. Since $X_{w_{r,n}}^{v}$ is normal,  $(X_{w_{r,n}}^{v})^{ss}_T(\mathcal{L}(n\omega_r)) $ is normal and hence $\richmodt{w_{r,n}}{v}{\mathcal{L}(n\omega_r)}$ is normal. Since $dim(\richmodt{w_{r,n}}{v}{\mathcal{L}(n\omega_r)}) = 1$, the GIT quotient  $\richmodt{w_{r,n}}{v}{\mathcal{L}(n\omega_r)}$
 is a smooth, rational projective curve. Hence 
$\richmodt{w_{r,n}}{v}{\mathcal{L}(n\omega_r)}$ is isomorphic to $\mathbb{P}^1$. 

If $\richmodt{w_{r,n}}{v}{\mathcal{L}(n\omega_r)}$ is isomorphic to $\mathbb{P}^1$ we get $l(v) = l(v_{r,n})-1$.  Also $v < v_{r,n}$ and $v \in W^{S\setminus\{\alpha_r\} }$. So $v =(s_{b_i} \cdots s_i) \cdots (s_{b_r} \cdots s_r)$ for some $i$, $1\leq i \leq r$, and for some $1 \leq b_i < b_{i+1} \cdots < b_{r} \leq n-1$ (see the discussion preceding Lemma \ref{lm:zwt}). Since $v_{r,n} =  (s_{a_1-1} \cdots s_2)(s_{a_2-1} \cdots s_3) \cdots (s_{a_{r-1}}-1 \cdots s_r)$, $v = s_\alpha v_{r,n}$ only when 
$s_{\alpha} = (a_i-1,a_i)$, $ 1 \leq i \leq r-1$.

We start with a zero-weight standard monomial basis for $H^0(X^{v}_{w_{r,n}},\mathcal{L}(n\omega_r))$ . Let  $v = s_{\alpha}v_{r,n}$ with $s_{\alpha} = (a_i-1,a_i)$ for some fixed $i$. We have $v_{r,n} = (1, a_1,\ldots,a_i,\ldots,a_{r-1})$ and $v= (1, a_1,\ldots,a_i-1,\ldots, a_{r-1})$. The $i+1$-st entry of $v_{r,n}$ is  $a_i$ and that of $v$ is $a_i-1$ and the rest of the entries are equal. We need to count the number of semistandard tableau of shape $n,n,\ldots,n$ ($r$ rows) with first column $v$. Because the tableau is semistandard, the positions of all integers other than $a_i-1$ and $a_i$ are fixed. So the number of such tableaus depends only on the number of $a_i-1$ in the $i$-th row. $a_i-1$ appears $n_i$ times in the $i$-th row of $\Gamma_{r,n}$. It is easy to see that for every $j$ in $\{0,\cdots,n_i\}$ there is a semistandard tableau with $a_i-1$ appearing $j$ times and $a_i$ appearing $n_i-j$ times in row $i$.
So we have $n_i+1$ linearly independent sections of the descent line bundle on the GIT quotient. This completes the proof.

\end{proof}

\section{Projective normality of the GIT quotient of  $X(w_{3,7})$}.
\label{s:tau37}

In this section we will work with $G=SL(7)$. We use the same notation as before. 
We study the GIT quotient of the Schubert variety $X(w_{3,7})$ with respect to
$T$-linearised line bundle ${\cal L}(7 \omega_3)$.  From \cite[Theorem 3.10]{kumar2008descent} we know that this line bundle descends to the line bundle $\tilde{{\cal L}}(7 \omega_3)$ on the GIT quotient
$\xtaumodt{3}{7}^{ss}_T({\cal L}(7 \omega_3))$. We show 

\begin{theorem}
\label{thm:projnormal}
The polarized variety $(\xtaumodt{3}{7}^{ss}_T({\cal L}(7 \omega_3)), \tilde{{\cal L}}(7 \omega_3))$ is
projectively normal.
\end{theorem}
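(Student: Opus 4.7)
The plan is to convert projective normality into a combinatorial decomposition problem for balanced semistandard Young tableaux, and then to resolve that problem for the specific case $(r,n)=(3,7)$.

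By descent of sections, $H^0(\xtaumodt{3}{7}^{ss}_{T}({\cal L}(7\omega_3)), \tilde{\cal L}(7\omega_3)^{\otimes k})$ is identified with $H^0(X(w_{3,7}), {\cal L}(7k\omega_3))^T$. Standard monomial theory exhibits a basis consisting of $T$-invariant standard monomials $p_{\tau_1}p_{\tau_2}\cdots p_{\tau_{7k}}$ indexed by semistandard Young tableaux of shape $(7k,7k,7k)$ with $\tau_1 \leq \cdots \leq \tau_{7k} \leq w_{3,7}=(3,5,7)$ and each integer in $\{1,\ldots,7\}$ appearing exactly $3k$ times; call these \emph{balanced tableaux of level $k$}. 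Since the quotient is smooth and hence normal, projective normality of $(\xtaumodt{3}{7}^{ss}_{T}({\cal L}(7\omega_3)), \tilde{\cal L}(7\omega_3))$ amounts to surjectivity of the multiplication map
\[\mu_k\colon \mathrm{Sym}^{k} H^0(\tilde{\cal L}(7\omega_3)) \longrightarrow H^0(\tilde{\cal L}(7\omega_3)^{\otimes k})\]
for every $k\geq 2$.

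The key reduction is this: if a balanced level-$k$ tableau $T$ admits a decomposition of its multiset of $7k$ columns as a disjoint union $T = T_1 \sqcup T_2 \sqcup \cdots \sqcup T_k$ where each $T_i$ is a balanced level-one tableau, then because every $T_i$ is a sub-multiset of the chain formed by the columns of $T$ it is itself a chain, each $p_{T_i}$ is a standard monomial of level one, and $p_T = p_{T_1}p_{T_2}\cdots p_{T_k}$ in the Pl\"ucker ring. By induction on $k$, surjectivity of $\mu_k$ therefore follows from the \emph{Extraction Lemma}: every balanced tableau of level $k\geq 2$ contains, as a sub-multiset of its columns, a balanced tableau of level one.

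To prove the Extraction Lemma, I would first enumerate the balanced level-one tableaux by writing $x_{i,j}$ for the number of times integer $i$ appears in row $j$ and solving the linear system coming from the balance and row-length conditions; the constraints force $x_{1,1}=x_{6,3}=x_{7,3}=3$ and $x_{3,3}+x_{4,3}+x_{5,3}=1$, and combined with the strict-column-increase requirement this yields exactly seven level-one balanced tableaux (one of which is $\Gamma_{3,7}$). Given a balanced tableau $T$ of level $k\geq 2$ with support chain $C$ in the poset $X$ of columns $\leq (3,5,7)$, I would consider the polytope $P_C$ of non-negative real vectors on $C$ satisfying the seven balance constraints; since $|X|=28$ and chains in $X$ have length at most ten, $P_C$ has dimension at most three. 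A case analysis on the possible chains, using the explicit list of seven level-one balanced tableaux to detect which of them can be embedded componentwise into the multiplicity vector of $T$, produces the desired level-one sub-multiset.

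The main obstacle is precisely this case analysis: one must verify, uniformly across all chains $C$ supporting balanced tableaux, that the associated polytope $P_C$ is \emph{normal} in the sense that every lattice point of $kP_C$ is the sum of $k$ lattice points of $P_C$. For short chains ($|C|\leq 7$) the polytope is a point and the claim is trivial, but for maximal chains of length near ten one must handle two- and three-dimensional polytopes, where normality can fail in general. The bounded ranges of $r=3$ and $n=7$ keep the verification finite and tractable; the same strategy should illuminate exactly where it breaks down for Grassmannians of higher rank.
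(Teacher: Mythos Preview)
Your reduction to the generation of $R=\bigoplus_m H^0(X(w_{3,7}),{\cal L}(7m\omega_3))^T$ in degree one is correct, and so is the enumeration of the seven level-one balanced tableaux $y_1,\ldots,y_7$. The gap is the Extraction Lemma itself: it is \emph{false}. The level-two balanced tableau
\[
z_{20}=\begin{tabular}{|c|c|c|c|c|c|c|c|c|c|c|c|c|c|}
\hline
1&1&1&1&1&1&2&2&2&3&3&3&3&3\\ \hline
2&2&2&4&4&4&4&4&4&5&5&5&5&5\\ \hline
3&5&6&6&6&6&6&6&7&7&7&7&7&7\\ \hline
\end{tabular}
\]
has column multiset $\{[1,2,3],[1,2,5],[1,2,6],[1,4,6]^3,[2,4,6]^2,[2,4,7],[3,5,7]^5\}$. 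Each of the seven $y_i$'s contains at least one column not in this multiset (e.g.\ $y_5$ needs $[3,4,7]$, $y_7$ needs $[2,5,7]$), so no $y_i$ embeds as a sub-multiset of columns of $z_{20}$. Hence the associated polytope is \emph{not} normal, and your proposed case analysis would necessarily fail at this chain.

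The paper's proof confronts exactly this obstruction. Its combinatorial lemma (Lemma~4.5) is weaker than your Extraction Lemma: a level-$m$ basis element either contains some $y_i$ as a subtableau, \emph{or} it contains $z_{20}$. The missing ingredient is then algebraic rather than combinatorial: one uses the Pl\"ucker straightening relation $p_{[2,5,7]}p_{[3,4,7]}=p_{[2,4,7]}p_{[3,5,7]}-p_{[2,3,7]}p_{[4,5,7]}$ together with the vanishing $p_{[4,5,7]}=0$ on $X(w_{3,7})$ (since $[4,5,7]\not\leq w_{3,7}$) to obtain $y_5y_7=z_{20}$ as sections on the Schubert variety. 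Thus $z_{20}$ lies in the image of $\mu_2$ even though it is not a product of standard monomials at the tableau level. Your purely polytopal strategy cannot see this relation; the passage to the Schubert variety, where certain Pl\"ucker coordinates vanish, is essential.
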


\begin{remark} 
\label{rem:projnormal}Let $S(m) = H^0(X(w_{3,7}),\mathcal{L}(7\omega_3)^{\otimes m}) $ be the global sections of the line bundle ${\cal L}(7\omega_3)$ on $X(w_{3,7})$ and let $R(m) = H^0(X(w_{3,7}),\mathcal{L}(7\omega_3)^{\otimes m})^T$ denote the invariant subspace with respect to action of $T$. The GIT quotient is precisely $Proj(\oplus_m R(m))$ (see \cite[Proposition 8.1]{dolgachev2003lectures}). 
Since the polarized variety $(X(w_{3,7}), {\cal L}(7\omega_3))$ is projectively normal, we have a surjective map  $S(1)^{\otimes m} \longrightarrow S(m)$ (see \cite{lakshmibai2007standard}) and an induced map $R(1)^{\otimes m} \rightarrow R(m)$.  Now the GIT quotient is smooth it is normal. Therefore to show projective normality of the GIT quotient all we need to show is that $\phi$ is surjective.  
\eat{Since the GIT quotient is smooth it is normal. Hence it suffices to show that $R$ is generated in degree 1, i.e. the multiplication map $R(1)^{\otimes m} \rightarrow R(m)$ is surjective.  }
\end{remark}

From Lemma \ref{wrn} we get $w_{3,7}=[3, 5, 7]$.
As before we identify the standard monomial basis of $H^0(X(w_{3,7}), {\cal L}(7m\omega_3))^T$ with semistandard Young tableaus. These tableaus have 3 rows and $7m$ columns with each integer from $\{1,\ldots,7\}$ appearing exactly $3m$ times - furthermore the last column is $[3,5,7]$.

To aid in the proof of projective normality we list the semistandard Young tableau basis of $R(1)$ and we also write down a semistandard tableau of shape $[14, 14,14]$ from $R(2)$ which will play a role in the proof. Henceforth we will use the notation $y_i$ for both the tableau $y_i$ it defines and also the standard monomial associated it to.

\[
y_1 = \begin{tabular}{|c|c|c|c|c|c|c|}
\hline
1 &1	& 1&	2&	2&	2&	3\\ 
\hline
3&	3	&4	&4	&4	&5&	5\\   
\hline                
5&	6&	6&	6&	7&	7&	7 \\ 
\hline
\end{tabular} 
\ \ y_2 =  \begin{tabular}{|c|c|c|c|c|c|c|}
\hline
1&	1&	1&	2&	2&	2&	3\\
\hline
3&	3&	4&	4&	5&	5&	5\\ 
\hline                  
4&	6&	6&	6&	7&	7&	7\\
\hline
\end{tabular}
\ \ y_3= \begin{tabular}{|c|c|c|c|c|c|c|}
\hline 
1&	1&	1&	2&	2&	3&	3\\
\hline
2&	3&	4&	4&	4&	5&	5\\
\hline		
5&	6&	6&	6&	7&	7&	7\\
\hline
\end{tabular}
\]
\[
y_4=\begin{tabular}{|c|c|c|c|c|c|c|}
\hline
1&	1&	1&	2&	2&	3&	3\\
\hline
2&	3&	4&	4&	5&	5&	5\\               
\hline
4&	6&	6&	6&	7&	7&	7\\
\hline
\end{tabular}
\ \ y_5=\begin{tabular}{|c|c|c|c|c|c|c|}
\hline
1&	1&	1&	2&	3&	3&	3\\
\hline
2&	2&	4&	4&	4&	5&	5\\		
\hline
5&	6&	6&	6&	7&	7&	7\\
\hline
\end{tabular}
\ \ y_6=\begin{tabular}{|c|c|c|c|c|c|c|}
\hline						
1&	1&	1&	2&	3&	3&	3\\
\hline
2&	2&	4&	4&	5&	5&	5\\
\hline               
4&	6&	6&	6&	7&	7&	7\\
\hline
\end{tabular}
\]						
\[
y_7=\begin{tabular}{|c|c|c|c|c|c|c|}
\hline
1&	1&	1&	2&	2&	3&	3\\
\hline
2&	4&	4&	4&	5&	5&	5\\
\hline               
3&	6&	6&	6&	7&	7&	7\\
\hline
\end{tabular}
\ \ z_{20}=\begin{tabular}{|c|c|c|c|c|c|c|c|c|c|c|c|c|c|}
\hline
1&       1&       1&       1&       1&       1&       2&       2&       2&       3&       3&       3&       3&       3\\
\hline
2&       2&       2&       4&       4&       4&       4&       4&       4&       5&       5&       5&       5&       5\\
\hline
3&       5&       6&       6&       6&       6&       6&       6&       7&       7&       7&       7&       7&       7\\
\hline
\end{tabular}
\]

We first make
some simple observations.

\begin{observation}
\label{obs:firstcol}
Every semistandard tableau basis of $H^0(X(w_{3,7}), {\cal L}(7m \omega_3))^T$ begins with one of the following columns - ${[1, 2, 3], [1, 2,  4] [ 1, 2, 5], [1, 3, 4], [1, 3, 5]}$, and ends with the column $[3, 5, 7]$.
\end{observation}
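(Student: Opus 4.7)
The plan is to reduce the observation to a pure combinatorial fact about semistandard Young tableaux by combining two ingredients that have already been set up in Section~\ref{s.notation}: (i) the Bruhat bound $\tau_i \le w_{3,7} = [3,5,7]$ for every column, which confines row~1 entries to $\{1,2,3\}$, row~2 entries to $\{1,\ldots,5\}$, and row~3 entries to $\{1,\ldots,7\}$; and (ii) the zero-weight condition, which forces each integer in $\{1,\ldots,7\}$ to appear exactly $3m$ times across the $21m$ cells. Combining these with the strict increase down each column pins down where each value can live: all $3m$ copies of $1$ must lie in row~1, all $3m$ copies each of $6$ and $7$ must lie in row~3, and the $3m$ copies of $2$ must lie in rows $1$ or $2$.

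First I would handle the last column. Since all $3m$ sevens sit in row~3 and row~3 is weakly increasing, $\tau_{7m}(3)=7$. The same argument places all $3m$ sixes in row~3, occupying $6m$ of its $7m$ cells and leaving exactly $m$ cells in row~3 for values in $\{3,4,5\}$. In particular at most $m$ of the $3m$ copies of $5$ can live in row~3, so at least $2m$ copies live in row~2, giving $\tau_{7m}(2)=5$. Likewise, row~1 has $7m$ cells containing $3m$ ones and at most $3m$ twos, so at least $m$ cells must be threes, yielding $\tau_{7m}(1)=3$ and hence last column $[3,5,7]$.

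Next I would treat the first column. Since row~1 begins with all its ones, $\tau_1(1)=1$. The $m$ non-$\{6,7\}$ cells of row~3 hold values in $\{3,4,5\}$ and appear leftmost in row~3, so $\tau_1(3)\in\{3,4,5\}$. For row~2, a dual count shows that at most $3m$ fours and at most $3m$ fives can sit there, so at least $m$ of the $7m$ row-$2$ cells hold twos or threes; hence $\tau_1(2)\in\{2,3\}$. Combining these with the strict column increase $\tau_1(1)<\tau_1(2)<\tau_1(3)$ leaves exactly the five admissible first columns $[1,2,3], [1,2,4], [1,2,5], [1,3,4], [1,3,5]$, completing the proof.

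The only point requiring care is running the two constraints (Bruhat bounds and uniform multiplicity $3m$) simultaneously when counting which entries are forced into each row; the rest is pigeonhole. No tools beyond the tableau description of $H^0(X(w_{3,7}),\mathcal{L}(7m\omega_3))^T$ recorded earlier are needed.
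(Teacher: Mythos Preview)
Your argument is correct and follows essentially the same pigeonhole strategy as the paper: combine the Bruhat bound $\tau_i \le [3,5,7]$ with the uniform multiplicity $3m$ to force where each value must live, and then read off the constraints on the extreme columns. The only cosmetic difference is that the paper argues by contradiction for each excluded value of $\tau_1(2)$ and $\tau_1(3)$, whereas you count directly; and the paper cites the earlier discussion for the last column being $[3,5,7]$ while you reprove it by the same counting.
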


\begin{proof}
We already noted above that the last column of every semistandard tableau basis element of $H^0(X(w_{3,7}), {\cal L}(7m \omega_3))^T$ is $[3, 5, 7]$.

Clearly, semistandardness forces that in the first row the leftmost $3m$ entries are filled with 1, and that in the last row the rightmost 6m entries are filled with 3m $6's$ followed by $3m$ 7's. So clearly the last entry of the first column cannot be 
6 or 7 otherwise we will have more 6's or 7's than permitted. The
second entry of the first column cannot be 5, otherwise the entire second row will have
only 5's a contradiction to the number of $5$'s present.
The second entry of the first column cannot be 4 for a similar reason - in that case the second row 
will only have 4's and 5's forcing at least one of them to occur more
than 3m times, a contradiction. This completes the proof.
\end{proof}
\qed

\begin{observation}\label{obs:m}
No semistandard tableau basis element of $H^0(X(w_{3,7}), {\cal L}(7m \omega_3))^T$ has the
following columns: $[1, 2, 7]$, $[1, 3, 7]$, $[1, 4, 7]$,$[1, 5, 6]$,$[1, 5, 7]$,
$[2,3,4]$,$[2, 3, 5]$, $[2, 3, 6]$,$[2, 3, 7]$,\\$[2, 4, 5]$,$[2, 5, 6]$,$[3, 4, 6]$, $[3, 5,
6]$.  The only columns containing a 6 are columns $[1, 2, 6]$, $[1, 3, 6]$,
$[1, 4,6]$ and $[2, 4, 6]$. There are exactly $m$ columns with
$[2, 4, 6]$ and at least $m$ columns with $[1, 4, 6]$. The only columns containing a 7 are $[2, 4, 7]$, $[2, 5, 7]$, $[3, 4, 7]$ and $[3, 5, 7]$ and there are at least $2m$ occurrences of columns $[2, 5, 7]$ and $[3, 5, 7]$. 
\end{observation}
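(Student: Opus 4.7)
The plan is to reduce the observation to an elementary bookkeeping computation that tracks, for each value $k \in \{1,\ldots,7\}$ and each row $i$, the interval of column indices on which row $i$ equals $k$; every claim then follows by interval intersection. I would begin by recording the hard constraints: the Bruhat inequality $\tau_j \le w_{3,7} = [3,5,7]$ forces every column $[a,b,c]$ to satisfy $a \le 3$, $b \le 5$, $c \le 7$, so $1$'s occur only in row $1$ while $6$'s and $7$'s occur only in row $3$; and the zero weight condition forces each digit to appear $3m$ times, so row $1$ contains all $3m$ copies of $1$ and row $3$ absorbs all $3m$ copies of $6$ together with the $3m$ copies of $7$, leaving only $m$ entries of row $3$ for the values in $\{3,4,5\}$.

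Next I would introduce variables $x_2,x_3$ (counts of $2$ and $3$ in row $1$), $y_2,y_3,y_4,y_5$ (counts in row $2$) and $z_3,z_4,z_5$ (counts in row $3$). Row lengths and digit totals yield $x_2+x_3=4m$, $z_3+z_4+z_5=m$, $y_2=3m-x_2$, $y_3=x_2-m-z_3$, $y_4=3m-z_4$, $y_5=3m-z_5$, and in particular $x_2 \ge m+z_3 \ge m$. Because each row is weakly increasing, the columns where row $i$ equals a given value form a contiguous interval which I can write down explicitly --- for example row $1=2$ on $[3m+1,\,3m+x_2]$, row $1=3$ on $[3m+x_2+1,\,7m]$, row $2=5$ on $[5m-z_3-z_4+1,\,7m]$, row $3=6$ on $[m+1,\,4m]$, and row $3=7$ on $[4m+1,\,7m]$.

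With this framework in hand, the three parts of the observation are simultaneously handled by intersecting the three relevant intervals. Each of the thirteen forbidden columns is eliminated by showing an empty intersection: for instance $[2,3,4]$ fails because row $1=2$ forces column index $>3m$ while row $3=4$ forces it $\le z_3+z_4\le m$; $[1,5,6]$ fails because row $2=5$ begins at $5m-z_3-z_4+1\ge 4m+1$ while row $1=1$ ends at $3m$; and $[3,4,6]$ fails because row $1=3$ begins at $3m+x_2+1\ge 4m+1$ while row $3=6$ ends at $4m$. The enumeration of allowed columns containing $6$ (respectively $7$) in the third row is then immediate. The quantitative claims follow from measuring interval lengths: the $[2,4,6]$ intersection is exactly $[3m+1,4m]$ of length $m$; the $[1,4,6]$ intersection is $[2m-z_3+1,3m]$ of length $m+z_3\ge m$; and the union of $[2,5,7]$ and $[3,5,7]$ occupies the range $[5m-z_3-z_4+1,\,7m]$ of length $2m+z_3+z_4\ge 2m$.

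The main difficulty is purely bookkeeping --- thirteen exclusion cases and three multiplicity counts must be verified --- but no deeper input is needed beyond semistandardness, the fixed last column $[3,5,7]$, and zero weight; once the interval table is written down each sub-claim collapses to a one-line arithmetic comparison.
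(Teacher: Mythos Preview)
Your proposal is correct and, in fact, cleaner than the paper's own argument. The paper proceeds by a long case-by-case elimination: for each of the thirteen forbidden columns it devises a separate contradiction based on semistandardness and the digit count (e.g.\ ``if $[1,x,7]$ is present then the positions to the left in row~1 are all $1$'s and those to the right in row~3 are all $7$'s, so one of the two counts is wrong''), and then argues the multiplicity claims about $[2,4,6]$, $[1,4,6]$ and $[*,5,7]$ by further ad hoc counting. Your route is different: you first extract from the Bruhat bound $\tau_j\le[3,5,7]$ the row-wise value restrictions $a\le 3$, $b\le 5$, $c\le 7$, which in one stroke confines each digit to specific rows; then the zero-weight condition determines the row profiles up to a handful of nonnegative parameters $x_2,z_3,z_4,z_5$, and the interval picture reduces every sub-claim to a single arithmetic comparison. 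The paper never isolates the row constraints $b\le 5$, $c\le 7$ explicitly and instead re-derives their consequences repeatedly inside the individual cases. Your framework buys uniformity and exact counts (you actually get that the number of $[1,4,6]$ columns is $m+z_3$ and the number of $[*,5,7]$ columns is $2m+z_3+z_4$, sharper than the paper's inequalities); the paper's approach avoids setting up notation but pays for it with thirteen bespoke arguments.
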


\begin{proof}

If there is a column with $[1, x, 7]$, $x$ among $2,3,4,5,6$,
standardness forces that the 
entries in the first row to the left of this column are all 1's and the entries in the third row to its right are all 7. Then no matter where this column appears either the number of 1's or the number of 7's is incorrect. 

If there is a column with $[1,5,6]$,  then standardness forces the subsequent columns to all have a 5 in the second row and the columns preceding it to have a 1 in the topmost row. Then no matter where this column appears either the number of 1's or the number of 5's is not $3m$.

If $[2,3,4]$ occurs it is necessarily in column $3m+1$ appearing
immediately after the occurrence of all the columns containing 1
because it is lexicographically least among columns beginning with 2. But the entry
in the bottom row position $3m+1$ is 6 is a contradiction. The same
argument shows that $[2,3,5], [2,4,5]$ cannot occur. 
If $[2, 3, 6]$ or $[2, 3, 7]$  is present, the first row to the right of
this column and the second row to the left of this column contain only
2,3's yielding a total of $7m$ entries with 2 and 3, a contradiction.

If $[2, 5, 6]$ is present all columns to the right of this column will
have a 5 in the second row by standardness. But then all the $3m$
columns containing 7 will be of the type $[x, 5, 7]$, for some $x$. But
then the number of $5$'s is at least $3m+1$, a contradiction.

If the column $[3, 4, 6]$ is present then  the top row  to its right is
filled with 3's. So every column containing 7 in the bottom has 3 
as its topmost element. So the number of 3's is at least $3m+1$, a contradiction.

Now suppose the column $[3, 5, 6]$ is present. If it is in the left half
of the tableau, standardness will forces the number of 5's to be more
than what is allowed.  If it is in the right half of the tableau then all
entries in the second row to the right of this column are filled with
5's. So all the columns containing 7 in fact contain both 5 and
7. Again, the number of 5's is more than $3m+1$,  a contradiction.
  
The above argument shows that the column appearing immediately after
all the columns containing a $1$, is column $[2, 4, 6]$. It appears
before the $3m$ columns containing a 7. Since the tableau has no lexicographically 
larger column containing a 6, this column repeats till the appearance
of a 7. So it occurs exactly $m$ times. 

Now the remaining $2m$ columns containing a 6 in the last row occur to
the left of column number $3m+1$ which has a $[2, 4, 6]$.  Suppose there
are less than $m$ columns with $[1, 4, 6]$ in the tableau. Since $[1, 4,
6]$ appears to left of the column numbered $3m+1$, all the entries in
the second row to the left of first column labeled $[1, 4, 6]$ must have
2 or 3. So there are at least $2m+1$ 2's and 3's in the second
row. Now there are at least $4m$ locations in the first row to the right
of last 1 which have  only 2 or 3. So the total number of 2's and 3s
is at least $6m+1$, a contradiction. So we conclude that at least $m$ rows to the
left of column numbered $3m+1$ contain $[1, 4, 6]$.

We cannot have a $5$ in the first row. Since we can have a $5$ in the
third row only in positions $\{1,2,\ldots,m\}$, and the only  columns having a
$5$ in the second row are $[2, 5, 7]$ and $[3, 5, 7]$ it follows that we
need at least $3m-m$ columns with $[2, 5, 7]$ and $[3, 5, 7]$. 
\end{proof}
 \qed

\begin{lemma}
\label{lm:main}
Let $m \geq 2$. Every semistandard basis element of $R(m)$ is a product of a $y_i$ and an element
of $R(m-1)$, or is a product of $z_{20}$ and an element of 
$R(m-2)$.
\end{lemma}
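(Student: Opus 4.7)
My plan is to translate the lemma into a sub-multiset condition on columns and then dispose of it by a case analysis on the first column of $T$.

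First, given two zero-weight semistandard tableaux $T_1, T_2$ of three rows, the product standard monomial $T_1 \cdot T_2$ corresponds to the tableau whose column multiset is the disjoint union of those of $T_1$ and $T_2$; the resulting tableau is automatically semistandard because any sub-multiset of columns of a semistandard tableau is still a chain in the Bruhat order on $I(3,7)$. Hence it suffices to show that for every semistandard $T \in R(m)$ with $m \geq 2$, the column multiset of $T$ contains, as a sub-multiset, either the column multiset of some $y_i$ ($1 \leq i \leq 7$) or the column multiset of $z_{20}$: the complement is then automatically a semistandard tableau with each of $1,\ldots,7$ appearing $3(m-1)$ or $3(m-2)$ times, i.e., an element of $R(m-1)$ or $R(m-2)$.

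Writing $x_{abc}$ for the multiplicity of column $[a,b,c]$ in $T$, the constraints from Observations \ref{obs:firstcol} and \ref{obs:m} say: the first column of $T$ lies in $\{[1,2,3], [1,2,4], [1,2,5], [1,3,4], [1,3,5]\}$ and the last column is $[3,5,7]$; the only admissible columns containing $6$ are $[1,2,6], [1,3,6], [1,4,6], [2,4,6]$, with $x_{246}=m$ and $x_{146}\geq m$; and the only admissible columns containing $7$ are $[2,4,7], [2,5,7], [3,4,7], [3,5,7]$, with $x_{257}+x_{357}\geq 2m$. The body of the argument is a case analysis on the first column of $T$, paired with a candidate $y_i$ whose own first column agrees: $y_1$ for $[1,3,5]$, $y_2$ for $[1,3,4]$, $y_3$ or $y_5$ for $[1,2,5]$, $y_4$ or $y_6$ for $[1,2,4]$, and $y_7$ for $[1,2,3]$. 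Each $y_i$ uses at most one copy of $[2,4,6]$ and at most two copies each of $[1,4,6]$ and $[3,5,7]$, all of which are supplied by the bounds above once $m \geq 2$; the one or two remaining column types required (typically one of $[1,3,6], [2,4,7], [2,5,7]$, or $[3,4,7]$) are forced to appear in $T$ by row-balance arguments in the style of the proof of Observation \ref{obs:m} applied under the assumption that the relevant column is absent.

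The delicate subcase is when the first column of $T$ is $[1,2,3]$ and $x_{257}(T) = 0$, so that $y_7$ (the unique $y_i$ starting with $[1,2,3]$, which demands a $[2,5,7]$) cannot be peeled off. Here I plan to exhibit $z_{20}$ as a sub-multiset of the columns of $T$, which requires the sharper bounds $x_{146}(T) \geq 3$, $x_{357}(T) \geq 5$, and $x_{125}(T), x_{126}(T), x_{247}(T) \geq 1$. The strategy is to propagate $x_{257}=0$ through the three row-sum equations together with the positional constraints of semistandardness: absence of $[2,5,7]$ refines $x_{257}+x_{357}\geq 2m$ to $x_{357}\geq 2m$, and a careful refinement of the row-balance analysis used in Observation \ref{obs:m} --- driven by the fact that every $7$ now sits in a $[2,4,7], [3,4,7]$ or $[3,5,7]$ column while every $5$ lies in the first two rows --- forces the appearance of $[1,2,5], [1,2,6]$ and $[2,4,7]$, and simultaneously pushes $x_{146}$ and $x_{357}$ above their naive bounds. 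The main obstacle is precisely this fine-grained sharpening of the multiplicity inequalities: the coarse bounds that suffice in the $y_i$-extraction cases must be tightened under the additional hypothesis $x_{257}(T)=0$, and doing so cleanly is the technically most intricate piece of the argument.
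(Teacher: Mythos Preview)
Your plan matches the paper's proof essentially line for line: a case analysis on the first column of $T$, pairing each possible first column with the $y_i$ sharing that first column, and isolating the one subcase (first column $[1,2,3]$ with $x_{257}=0$) in which $z_{20}$ must be extracted instead. One small slip to flag: $y_6$ contains \emph{three} copies of $[3,5,7]$, not two, so in the $[1,2,4]$ case with a $[1,2,6]$ present the coarse bound $x_{257}+x_{357}\ge 2m$ is not enough and you will need the sharper estimate $x_{357}\ge m+2$---the paper obtains this by exactly the row-balance style argument you already invoke, so this does not affect the soundness of your outline.
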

\begin{proof}
Let $f$ be a semistandard basis element of $R(m)$. The proof follows a case by case analysis.
\begin{itemize}
\item[a] The first column of $f$ is $[1, 2, 3]$. 
By Observation ~\ref{obs:m} above we have at least $m$ columns with
$[1, 4, 6]$ and exactly $m$ columns labeled $[2, 4, 6]$.  Furthermore we can
have at most $(m-1)$ 5's in the last row of $f$. So we have at least
$2m+1$ columns in $f$ with $[2, 5, 7]$ and $[3, 5, 7]$, since these are the only
columns containing 5 in the second row.  The last column of $f$ is a $[3, 5,
7]$. If the remaining 2m, columns were all $[2, 5, 7]$ using Observation
~\ref{obs:m} the total number of 2's is at least $m+1+2m$ a contradiction.
(the $m$ 2's from columns with $[2, 4, 6]$, and one from the first
column having
$[1, 2, 3]$.  It follows that there are at least two columns with $[3, 5, 7]$.  
 
\begin{enumerate}[leftmargin=*,align=left]
\item Suppose $f$ has at least one column with $[2,5,7]$. Then we have
  one $[1,2,3]$, at least two $[1,4,6]$'s and one $[2,4,6]$, one $[2, 5,7 ]$ and 
two $[3,5,7]$'s. So the tableau $y_7$ appears as a
subtableau. The complement of this subtableau in $f$ is an element of $R(m-1)$. So $f$ is a 
product, of $y_7$ and an element from $R(m-1)$ as required.
 
\item $f$ has no $[2,5,7]$.  So we have at least $2m+1$
  columns in $f$ having $[3,5,7]$'s. Now the  remaining $7$'s can be 
made up from $[3,5,7]$'s or $[2,4,7]$'s or $[3,4,7]$'s. These cannot all come from $[3,5,7]$ and $[3,4,7]$ since the number of $3$'s in that case would be more than $(3m+1)$. So there is at least one $[2,4,7]$. Note that there are also at most $(m-1)$ $[2,4,7]$'s, $[3,4,7]$'s, and additional $[3,5,7]$'s in from column numbers $(4m+1)$
to $(5m-1)$. Now the number of 2's in row 1 is at most $(m+m-1)$ (from the
columns with $[2,4,6]$, and at most $m-1$ columns with $[2,4,7]$). So
we need at least $(m+1)$ 2's in the second row. In this case then the
second row of columns 1 to column $(m+1)$ contains only $2$.  In particular $[1,
2, 6]$ is present in $f$. Since there are at most $3m-1$
5's in the second row of $f$ (since we know there is a $[2,4,7]$), there is
at least one $5$ in the bottom row of $f$ in position $\{1,2,\ldots,m\}$,  forcing a
$[1,2,5]$ in $f$. 

Now the total number of $4$'s and $5$'s is $6m$. 
The total
number of $4$'s and $5$'s in the last row is at most $(m-1)$.    
We have
$m$ 4's from the $[2,4,6]$. The total number of $4$'s and $5$'s from
the columns containing $[2,4,7]$, $[3,4,7]$ and $[3,5,7]$ is at most
$3m$ . All of these can account for a total of $(5m-1)$ 4's and $5$'s coming from
these columns. Since we can have no more $5$'s in the second row, the
deficit $(m+1)$ needed must be made from 4's in the second row, in
fact occurring in columns numbered $m+2$ to $3m$. So we have at least
$3$ columns in $f$ with $[1,4,6]$'s. 

Taking stock, in $f$ we have 
one $[1,2,3]$, a $[1,2,5]$, a $[1,2,6]$, 3 columns with $[1,4,6]$, 2
columns with $[2,4,6]$, one $[2,4,7]$ and at least 5 $[3,5,7]$'s. So we see that the tableau indexing the basis vector $Z_{20}$ is a subtableau of $f$ and the complement of this subtableau in $f$ is an element of $R(m-2)$. 
$f$ is a product of $z_{20}$ and an element from $R(m-2)$. 
\end{enumerate}
  
\item[b] The first column of $f$ is $[1, 2, 4]$.  In this case there are at
  most $(m-1)$ $5$'s in the last row of $f$ and so there should be at least
  $(2m+1)$ columns in $f$ with $[2,5,7]$ and $[3,5,7]$.  

Notice that the $3m$ 6's cannot all come from columns with $[1,4,6]$ and $[2,4,6]$ alone. If that were the case we will have $3m$ 4's from these columns, and an additional $4$ from the first column, a contradiction. So at least one of the columns in $f$ with a $6$ has to be $[1,2,6]$ or $[1,3,6]$.

\begin{enumerate}[leftmargin=*,align=left]
\item Suppose a $[1,2,6]$ is present in $f$. Then it has to be in column $(m+1)$of $f$. Then we have $2$'s in the second row of $f$ in columns $1$ to columns $(m+1)$ by semi-standardness.  From Observation ~\ref{obs:m} we have $m$ $2$'s from the columns with $[2,4,6]$, so we can have at most $(m-1)$ columns with $[2,5,7]$. This means there are at least $2m+1 -(m-1)= m+2$ columns
with $[3,5,7]$, so we have at least $4$ columns with $[3,5,7]$. But
this means we have 
a $[1,2,4]$, a $[1,2,6]$, a $[1,4,6]$, a $[2,4,6]$, and three $[3, 5,
7]$, i.e. the tableau indexing the basis element $y_6$ as a subtableau in this case. 

\item Now suppose we do not have a $[1,2,6]$ in $f$ but have a $[1,3,6]$.  We claim a $[2,5,7]$ must appear. Notice that we can have at most $(m-1)$ $[2,4,7]$'s since we already have $(2m+1)$ 7's. Now there are at most $(m)$ $2$'s in the second row of $f$. But this means we have at most $m + m-1+m < (3m)$ 2's in $f$, a contradiction.  So we may assume we have at least one $[2,5,7]$ in $f$. We claim that we have at least 2 $[3,5,7]$'s in $f$, for otherwise we have $(2m)$
$[2,5,7]$'s. But we have more 2's than allowed since we have $(m)$ 2's from
the $[2,4,6]$ and a $2$ also from the $[1,2,4]$. So we conclude
we have a $[1,2,4], [1,3,6], [1,4,6], [2,4,6], [2,5,7],[3,5,7], [3, 5,
7]$.  So $y_4$ is a subtableau and we are done in this case. 
\end{enumerate}

\item[c] If the first column in $f$ is $[1,2,5]$.  If there are no $[1,2,6]$ or $[1,3,6]$ in $f$  then column $(m+1)$ must be $[1,4,6]$ and the first $m$ elements in the third row must be all $5$'s. But then the second and third row together have more than $(6m)$ 4's and 5's, a contradiction. So
either $[1,2,6]$ or  $[1,3,6]$ or, maybe,
both are present in $f$.

\begin{enumerate}[leftmargin=*,align=left]
\item Suppose first that $f$ has a $[1,2,6]$. Then the second row of $f$ has
at least $(m+1)$ 2's, and since we already have $m$ 2's from the $[2,4,
6]$' we can have at most $(m-1)$ $[2,5,7]$'s. This forces at least $(m+1)$
columns of $f$ to be $[3,5,7]$'s. 

If $f$ has a column with $[3, 4, 7]$ we see the tableau indexing $y_5$
is present as a subtableau of $f$.  If $f$ has no $[3,4,7]$ - we count the
number of 4's - we have $m$ from columns $[2,4,6]$.  We can
have at most $(2m-1)$ columns with$[1,4,6]$ since 2's occupies positions
1 up to $(m+1)$ in the second row. To make up the requisite 4's we need to
have at least one $[2,4,7]$. But then the number of $3$'s in the first
row of the tableau from $[3,5,7]$ is at most $(3m-1)$, so to make up
the requisite 3's,
there must be a $[1,3,6]$. In which case 
we see that the tableau indexing $y_3$ is present as a subtableau of $f$.

\item Suppose we only have a $[1,3,6]$ in $f$ and no $[1,2,6]$.
 Now the total number of columns in $f$ with $[1,2,5]$ and $[1,3,5]$ is 
$m$. We have exactly $2m$ $[2,5,7]$ and $[3,5,7]$ put together. If $f$ has no $[2,4,7]$, the remaining $m$ $7$'s
come from $[3,4,7]$. But then the tableau cannot have $[2,5,7]$ since semi-standardness will be violated.  So we only have $2m$ columns of $f$ with $[3,5,7]$. However this means we have $(3m+1)$ $3$'s a contradiction. So we may assume that we have a $[2,4,7]$.  We show then that there are at least two $[3,5,7]$, so the tableau indexing $y_3$ is a subtableau.  Otherwise there are $(2m-1)$ $[2,5,7]$'s.But we already have one $[2,4,7]$, one $[1,2,5]$ and $(m)$ [2,4,6]'s a contradiction to the numer of allowed $2$'s. 
\end{enumerate}

\item[d] If the first column of $f$ is $[1,3,5]$. Then all the 2's occur in the first row of $f$ and in columns $(3m+1)$ to $6m$ and there are $(m)$ 3's in the last $m$ columns of the first row. Since there are no 3's 
in the last row, the remaining $(2m)$ 3's must occur in the second row. Since 6 occurs in the last row in positions $(m+1)$ to $(4m)$, and a 1 occurs in the first row in columns 1 to $3m$, it follows that  $[1,3,6]$  is a column in $f$.
Now all the 4's occur in the second row, starting at position $(2m+1)$
and ending at position $5m$, after which we only have 5's in the
second row. Since the 2's in the first row of $f$ occur in positions $(3m+1)$
to $6m$ and the 7's occur in the bottom row in position $(4m+1)$ to $(7m)$
it follows that there is a column containing $[2,4,7]$ and a column
containing $[2,5,7]$. So $y_1$ is a subtableau of $f$.

\item[e] In case the first column is $[1,3,4]$, all the 2's occur in
  the first row, and so we have $(m)$ 3's in the first row appearing in
  the columns $[3,4,7]$ and $[3,5,7]$. So $f$ has at least $(m)$ columns with $[2,5,7]$.  Now there are $(2m)$ 3's in the second row and these must occur in positions $1$ to $2m$.
Since the last row has 6 in columns $(m+1)$ to $(4m)$ and the first row has a 1 in columns $1$ to $(3m)$, it follows that there is a column with filling $[1,3,6]$ in the given tableau.  It follows
that the tableau indexing $y_2$ is a subtableau of $f$ - we are done by induction as in the above case.
\end{itemize}

\end{proof} 

\begin{remark}
\label{rem:y5y7}
Let $\tau_1=[2,5,7]$, $\tau_2=[3,4,7]$, $\tau_3 =[2,4,7]$, $\tau_4=[3,5,7 ]$, $\tau_5=[2,3,7]$,
$\tau_6=[4,5,7]$. Consider the product of the Plucker coordinates $p_{\tau_1} p_{\tau_2}$. The straightening law gives us 
$p_{\tau_1}p_{\tau_2}=p_{\tau_3}p_{\tau_4}-p_{\tau_5}p_{\tau_6}$.
 On the Schubert variety $X(w_{3,7})$, $p_{\tau_6}=0$. 
So on $X(w_{3,7})$, $p_{\tau_1}p_{\tau_2}=p_{\tau_3}p_{\tau_4}$.  As a result $y_5 y_7 = z_{20}$.
\end{remark} 

\begin{theorem}
The ring $R=\oplus_{m\geq 0} H^0(X(w_{3,7}), {\cal L}(7m \omega_3))^T$ is generated in degree 1.
\end{theorem}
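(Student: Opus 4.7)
The plan is to prove generation in degree one by induction on $m$, combining the structural decomposition from Lemma \ref{lm:main} with the straightening identity in Remark \ref{rem:y5y7}. The base cases $m=0$ and $m=1$ are trivial (in degree $1$ the $T$-invariant semistandard basis is by definition $\{y_1,\dots,y_7\}$). The inductive step will handle $m \geq 2$ by invoking Lemma \ref{lm:main} to peel off a degree-one factor from any semistandard basis element.

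More concretely, fix $m \geq 2$ and let $f$ be a semistandard basis element of $R(m)$. By Lemma \ref{lm:main} either $f = y_i \cdot g$ with $g \in R(m-1)$ for some $i \in \{1,\dots,7\}$, or $f = z_{20} \cdot h$ with $h \in R(m-2)$. In the first case, $g$ is itself a $T$-invariant section of degree $m-1$, and by the inductive hypothesis $g$ can be written as a product of $m-1$ elements of $R(1)$, so $f$ is a product of $m$ elements of $R(1)$. In the second case, Remark \ref{rem:y5y7} identifies $z_{20}$ with the product $y_5 y_7$ as sections on $X(w_{3,7})$ (the correction term from the straightening law vanishes because $p_{[4,5,7]} = 0$ on $X(w_{3,7})$). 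Hence $f = y_5 \cdot y_7 \cdot h$, and applying the inductive hypothesis to $h \in R(m-2)$ expresses $f$ as a product of $m$ elements of $R(1)$.

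Since every semistandard basis element of $R(m)$ lies in the image of the multiplication map $R(1)^{\otimes m} \to R(m)$, and since the semistandard monomials span $R(m)$, this map is surjective for every $m$. Combined with Remark \ref{rem:projnormal}, which reduces projective normality of the GIT quotient to exactly this surjectivity, we will also obtain Theorem \ref{thm:projnormal} as an immediate consequence.

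The main obstacle has already been overcome in Lemma \ref{lm:main}; given its case-by-case subtableau analysis, the present theorem is essentially a formal induction. The only subtle point worth flagging in the write-up is that the identity $y_5 y_7 = z_{20}$ uses a specific feature of the Schubert variety $X(w_{3,7})$ (vanishing of a Plücker coordinate lying outside $X(w_{3,7})$), so one should cite Remark \ref{rem:y5y7} explicitly where the $z_{20}$ case is reduced to a product of two degree-one generators.
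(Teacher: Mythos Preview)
Your proposal is correct and follows essentially the same approach as the paper: induction on $m$, invoking Lemma \ref{lm:main} to factor each semistandard basis element as $y_i \cdot g$ or $z_{20}\cdot h$, and using Remark \ref{rem:y5y7} to rewrite $z_{20}=y_5 y_7$. The only cosmetic difference is that you state a bit more explicitly that the semistandard monomials span $R(m)$ and that the conclusion feeds into Theorem \ref{thm:projnormal} via Remark \ref{rem:projnormal}.
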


\begin{proof}\eat{ ${\mathbb C}[Y_1, Y_2,\ldots, Y_7]$ to $R$}

We continue to use the notation $y_1,\ldots, y_1, z_{20}$ for the semistandard tableau basis elements and the monomials they index.

The proof is by induction on $m$, the base case $m=1$ being
obvious. Now assume $m \geq 2$. Given any semistandard basis element of $H^0(X(w_{3,7}), {\cal L}(7m \omega_3))^T$, Lemma \ref{lm:main} shows that it can be written as a product of one of the $y_i$'s  and a semistandard basis element of 
$R(m-1)$, or as a product of $z_{20}$ in $R(2)$, and a semistandard basis element of $R(m-2)$. Because of Remark \ref{rem:y5y7} we have $z_{20} =y_5y_7$. So we can replace $z_{20}$ by $y_5y_7$. It follows by induction that every basis element of $R(m)$ is in the algebra generated by the $y_i$'s.
\end{proof}

It follows that there is a surjective ring homomorphism $\Phi: {\mathbb C}[Y_1, Y_2,\ldots, Y_7] \rightarrow R$, sending $Y_i$ to $y_i$.

Now let ${\cal I}$ be the two sided ideal generated by the following
relations in ${\mathbb C}[Y_1, Y_2,\ldots, Y_7]$.
\begin{subequations}
\label{eq:rels}
\begin{align}
Y_1Y_4&=Y_2Y_3- Y_2Y_7+Y_1Y_7 \label{first}\\
Y_1Y_5&=Y_3^2 - Y_3Y_7\label{second}\\	
Y_1Y_6&=Y_3Y_4 - Y_4Y_7\label{third}\\	
Y_2Y_5&=Y_3Y_4 - Y_3Y_7\label{fourth}\\
Y_2Y_6&=Y_4^2-Y_4Y_7\label{fifth}\\
Y_3Y_6&=Y_4Y_5\label{sixth}
\end{align}
\end{subequations}

\begin{theorem}
\label{thm:main}
The map $\Phi$ induces an isomorphism $\tilde{\Phi}: {\mathbb C}[Y_1, Y_2,\ldots, Y_7]/{\cal I} \simeq R$.
\end{theorem}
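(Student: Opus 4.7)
The plan is to establish that $\tilde\Phi$ is a well-defined surjection and then show it is injective by a dimension count. Surjectivity is already in hand: the previous theorem says $R$ is generated by $y_1,\dots,y_7$, so $\Phi$ is surjective, and the induced map $\tilde\Phi$ is automatically surjective once we show $\mathcal{I}\subseteq \ker\Phi$. Thus the two substantive tasks are (i) verifying each of the six relations (\ref{first})--(\ref{sixth}) holds after substituting $Y_i\mapsto y_i$, and (ii) bounding $\dim (\mathbb{C}[Y_1,\dots,Y_7]/\mathcal{I})_m$ from above by $\dim R(m)$.

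For (i) I would mimic exactly the computation of Remark~\ref{rem:y5y7}: each relation is obtained by choosing a pair of Pl\"ucker coordinates appearing in $y_iy_j$ that violates the standard monomial order, applying the quadratic Pl\"ucker (straightening) relation, and then discarding the coordinates $p_\tau$ that vanish on $X(w_{3,7})$ (i.e.\ those $\tau \not\le w_{3,7}=[3,5,7]$ in Bruhat order). Each of the six equations has a single non-standard quadratic factor in its left-hand side, so the derivation is essentially uniform; the right-hand side monomials are precisely the standard replacements read off from the straightened tableau. This shows $\mathcal{I}\subseteq\ker\Phi$ and produces $\tilde\Phi$.

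For (ii) the key observation is that the six relations have leading monomials $Y_1Y_4,\ Y_1Y_5,\ Y_1Y_6,\ Y_2Y_5,\ Y_2Y_6,\ Y_3Y_6$ under any monomial order that puts $Y_7$ last and is lexicographic in the remaining variables. The right-hand side of each relation is then a sum of strictly smaller monomials, so iterated rewriting terminates and every element of $\mathbb{C}[Y_1,\dots,Y_7]/\mathcal{I}$ is a linear combination of monomials not divisible by any of the above six leading monomials. I would count these ``standard'' monomials of degree $m$ and match the count against the number of semistandard Young tableaux of shape $(7m,7m,7m)$ with content $(3m,3m,\dots,3m)$ and last column $[3,5,7]$ (i.e.\ $\dim R(m)$). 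The natural bijection is dictated by Lemma~\ref{lm:main}: reading a tableau from left to right, one can peel off a $y_i$ at each step, and the choices made in the case analysis of that lemma correspond exactly to the allowed $Y_i$-monomials after rewriting $Y_5Y_7$ using (or identifying it with) the degree-$2$ element $z_{20}$. This will give $\dim(\mathbb{C}[Y_1,\dots,Y_7]/\mathcal{I})_m \le \dim R(m)$, and combined with surjectivity of $\tilde\Phi$ in every degree forces equality and injectivity.

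The main obstacle I expect is matching the two Hilbert functions cleanly. One must check that the rewriting procedure is confluent (equivalently, that all S-pair reductions among the six relations reduce to zero modulo $\mathcal{I}$), so that the count of standard monomials is honest rather than just an upper bound from one particular reduction strategy. In practice this amounts to a finite, bounded verification of the $\binom{6}{2}=15$ S-pairs, the critical ones being those among the relations whose leading monomials share a variable ($Y_1Y_4$ with $Y_1Y_5$, $Y_1Y_6$; $Y_2Y_5$ with $Y_2Y_6$; $Y_3Y_6$ with $Y_1Y_6$ and $Y_2Y_6$; etc.). Provided these reductions close, the standard monomial count and the Lemma~\ref{lm:main} decomposition match degree-by-degree, completing the proof that $\tilde\Phi$ is an isomorphism.
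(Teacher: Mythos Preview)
Your proposal and the paper's proof share the same backbone: verify the six relations in $R$ (both you and the paper defer the explicit Pl\"ucker-straightening computations) and then establish confluence of the rewriting system with leading terms $Y_1Y_4,\,Y_1Y_5,\,Y_1Y_6,\,Y_2Y_5,\,Y_2Y_6,\,Y_3Y_6$. The paper invokes Bergman's diamond lemma and checks exactly the four overlap ambiguities $Y_1Y_2Y_5,\,Y_1Y_2Y_6,\,Y_1Y_3Y_6,\,Y_2Y_3Y_6$; your S-pair check is the same computation in Gr\"obner language. Up to this point the arguments coincide.

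Where you diverge is in closing the injectivity step. The paper simply asserts that once confluence holds, every relation among the $y_i$ already lies in $\mathcal{I}$. You instead propose an explicit Hilbert-function comparison, matching the number of standard $Y$-monomials of degree $m$ against $\dim R(m)$ via a bijection extracted from the case analysis of Lemma~\ref{lm:main}. This is a sound instinct --- strictly speaking, confluence only gives a basis of $\mathbb{C}[Y]/\mathcal{I}$ and says nothing a~priori about linear independence of the images in $R$, so the paper's conclusion is terse at exactly this point. The weak link in your plan, however, is the bijection itself: Lemma~\ref{lm:main} furnishes a peeling procedure for tableaux, not an evident one-to-one correspondence with monomials avoiding the six forbidden quadratics, and the special role of $z_{20}=y_5y_7$ (which \emph{is} an allowed $Y$-monomial) muddies any naive matching. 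You would need to make that correspondence precise, or else compute the two Hilbert functions independently; for the latter, note that the paper subsequently identifies the quotient with a rational normal scroll, whose Hilbert series is elementary to write down and compare.
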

\begin{proof}
By explicit calculations one can check that the above relations hold with $Y_i$ replaced by $y_i$ - are in the kernel of $\tilde{\Phi}$. We omit these calculations. To complete
the proof we show we can use the above relations as a reduction
system. The process consists of replacing monomial $M$ in the $Y_i$'s 
which is divisible by a term $L_i$ on the left
hand side of one of the reduction rules $L_i = R_i$, by $(M/L_i)R_i$. Here $R_i$ is the right hand side of $L_i = R_i$. 

We show that the diamond lemma of ring theory holds for this reduction
system \cite{bergman2008diamond}. What this implies is that
any monomial in the $Y_i$'s reduces, after applying these reductions (in
any order, when multiple reduction rules apply) to a unique
expression in the $Y_i$'s, in which no term is divisible by a term
appearing on the left hand side of the above reduction system. 

We prove that the diamond lemma holds for this reduction system by looking at the reduction of the minimal overlapping 
ambiguities $Y_1Y_2Y_5, Y_1Y_2Y_6, Y_1Y_3Y_6$ and $Y_2Y_3Y_6$. We show in each case that the
final expression is unambiguous. It follows that any relation among $Y_i$'s
is in the two sided ideal ${\cal I}$ generated by the above
relations. This proves that the map $\tilde{\Phi}$ constructed above is injective,.

To complete the proof we look at the reductions of overlapping
ambiguities.

$Y_1Y_2Y_5$ - using rule \ref{second} above we get $Y_2(Y_3^2-Y_3Y_7)= Y_2Y_3^2
-Y_2Y_3Y_7$ which cannot be reduced further. 
 On the other hand using rule \ref{fourth} above we get $Y_1(Y_3 Y_4
-Y_3Y_7)$. Now this can be further reduced using rule \ref{first} and we get
$Y_3 Y_1 Y_7 + Y_3 Y_2Y_3 -Y_3 Y_2 Y_7 -Y_1Y_3Y_7$ and this is equal
to $Y_2Y_3^2 -Y_2Y_3Y_7$. The reduction is unique in this case.

Likewise one can show that $Y_1Y_2Y_6$ reduces to the unique
expression $Y_2Y_3Y_4-Y_2Y_4Y_7$. And $Y_2Y_5Y_6$ reduces to $Y_4^2Y_5
- Y_4Y_5Y_7$ and $Y_2Y_3Y_6$ reduces to $Y_3Y_4^2 - Y_3^2Y_4Y_7$
completing the proof.
\end{proof}

\section{Deodhar decomposition to compute quotients of Richardson varieties}\label{s.deodhar}

This section is again motivated by the question of understanding GIT quotients of Richardson varieties in the Grassmannian. In Section \ref{s:taurngen} we proved some results on quotients of Richardson varieties. A natural strategy to understand the GIT quotient is
to take a stratification of a Richardson variety, understand what the GIT quotient of each strata is, and also understand how the GIT quotients of these strata patch up. Such a stratification of the open cell of a Richardson variety was given by Deodhar
 \cite{deodhar1985some}. This was to be our starting point. Working with small examples we believed that the restriction of a $T$-invariant section to the open cell would be a homogenous polynomial and that this would lead us to discover the equations defining the GIT quotient of a Richardson variety. However we soon realized that sections may not restrict to homogenous polynomials on the open cell, that the issue is more subtle.
We have necessary conditions which guarantee when sections restrict to homogenous polynomials on the open cell. This is Lemma \ref{lm:homog}. To state the Lemma and also the proof we need to introduce the Deodhar decomposition and some more notation and theorems about Deodhar decomposition of Richardson varieties on the Grassmannian. We do that in the next Subsection \ref{sub:deod}.  We use the Deodhar decomposition to study the GIT quotients of Richardson varieties in $X(w_{3,7})$ in Section \ref{s:deodhar}. Although all these results follow from the results in Section \ref{s:taurngen} we prove them again since this can be done by explicit calculations. Finally we show that the GIT quotient of $X(w_{3,7})$ is a rational normal scroll. We were unable to complete this proof
using only information about the GIT quotients of Richardson varieties in $X(w_{3,7})$. Instead we show that the equations defining the GIT quotient is a determinantal variety.

\subsection{Deodhar decomposition}
\label{sub:deod}
In \cite{deodhar1985some} Deodhar considered the intersection in $G/B$ of the open cell in a Schubert variety with the open cell of an opposite Schubert variety.  For $v, w \in W$, define the Richardson strata\footnote{this terminology is not standard. What we have called strata is sometimes called a Richardson variety}
$$ R^v_w = (BwB/B) \cap (B^{-}vB/B)$$

Note that this is not the same as the definition of a Richardson variety (see for example \cite{brion2003geometric}). Recall that for $v,w \in 
W$ a Richardson variety in $G/B$ was defined to be the intersection of $X(w) \cap X^v$. Since both $X(w)$ and $X^v$
contain the intersection of $(BwB/B) \cap (B^{-}vB/B)$ it is clear that
$R^v_w \subseteq X^v_w$.  And so Richardson strata is empty if $v
\not \leq w$ and the closure of $R^v_w$ is $X^v_w$.

In \cite{deodhar1985some} Deodhar gave a refined decomposition of a Richardson strata in $G/B$ into disjoint locally closed subvarieties of a Schubert variety.  We follow the notation from Marsh and Reitsch\cite{marsh2004parametrizations}, and Kodama and Williams \cite{kodama2013deodhar}. The definitions and examples are taken verbatim from \cite{kodama2013deodhar} since it is their notation and set up that we use in our proofs. 

Fix a reduced decomposition ${\bf w}=s_{i_1}s_{i_2}\cdots s_{i_m}$. 
We define a subexpression ${\bf v}$ of ${\bf w}$ to be a word obtained from the reduced expression ${\bf w}$ by replacing some of the factors with 1. For example, consider a reduced expression in $S_4$, say $s_3s_2s_1s_3s_2s_3$. Then $s_3s_2 1 s_3s_2 1$ is a subexpression of $s_3s_2s_1s_3s_2s_3$ . Given a subexpression ${\bf v}$, we set $\mathbf{v}_{(k)}$ to be the product of the leftmost $k$ factors of $\mathbf{v}$, if $k \geq 1$, and set $\mathbf{v}_{(0)} = 1$. The following definition was given in \cite{marsh2004parametrizations} and was inspired from Deodhar's paper \cite{deodhar1985some}.

\begin{definition} Given a subexpression ${\bf v}$ of a reduced expression ${\bf w} = s_{i_1} s_{i_2} \cdots s_{i_m}$ , we define 
\begin{eqnarray*}
J_{\bf v}^{\circ}  & := &\{k \in \{1,...,m\} | \mathbf{v}_{(k-1)} < \mathbf{v}_{(k)}\}\\
J_{\bf v}^{\square} &:= & \{k \in \{1,...,m\} | \mathbf{v}_{(k-1}) = \mathbf{v}_{(k)}\}\\
J_{\bf v}^{\bullet}& := & \{k \in \{1,...,m\} | \mathbf{v}_{(k-1)} > \mathbf{v}_{(k)}\}\\
\end{eqnarray*}
\end{definition}

The expression ${\bf v}$ is called non-decreasing if $\mathbf{v}_{(j -1)} \leq \mathbf{v}_{(j)}$ for all $j = 1, \ldots , m$, and in this case $J_{\bf v}^{\bullet} = \emptyset$.

The following definition is from \cite[Definition 2.3]{deodhar1985some}.

\begin{definition} (Distinguished subexpressions). A subexpression ${\bf v}$ of ${\bf w }$ is called distinguished if we have  
\[ \mathbf{v}_{(j)} \leq \mathbf{v}_{(j-1)} s_{i_j} \ \forall \  j  \in \{1,\ldots,m\}\]
\end{definition}

In other words, if right multiplication by $s_{i_j}$ decreases the length of $\mathbf{v}_{(j-1)}$, then in a distinguished subexpression we must have $\mathbf{v}_{(j)} = \mathbf{v}_{(j-1)}s_{i_j}$ .

We write ${\bf v} \prec {\bf w}$ if ${\bf v}$ is a distinguished subexpression of ${\bf w}$.

\begin{definition} (Positive distinguished subexpressions). 
\label{def:posexp}
We call a subexpression ${\bf v}$ of ${\bf w}$ a positive distinguished subexpression (or a PDS for short) if
$\mathbf{v}_{(j-1)} < \mathbf{v}_{(j-1)}s_{i_j}$, for all $j\in \{1, . . . , m\}$.

\end{definition}

Reitsch and Marsh \cite{marsh2004parametrizations} proved
\begin{lemma}
\label{lem:uniquepos}
 Given  $v \leq w$ and a reduced expression ${\bf w}=s_{i_1}\cdots s_{i_m}$for $w$, there is a unique PDS ${\bf v}^+$ for $v$ in ${\bf w}$.
\end{lemma}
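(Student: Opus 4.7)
The plan is to prove existence and uniqueness of the PDS simultaneously, by induction on $m = l(w)$, peeling off the rightmost factor $s_{i_m}$ of $\mathbf{w}$. The base case $m = 0$ is trivial, so assume $m \geq 1$ and set $\mathbf{w}' = s_{i_1}\cdots s_{i_{m-1}}$, a reduced expression for $w' := ws_{i_m}$ of length $m-1$. The induction splits according to whether $vs_{i_m} < v$ or $vs_{i_m} > v$.

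In the first case, I would first argue that the last factor of any PDS $\mathbf{v}$ for $v$ in $\mathbf{w}$ is forced to be $s_{i_m}$: if it were $1$, we would have $\mathbf{v}_{(m-1)} = v$, and then $\mathbf{v}_{(m-1)} s_{i_m} = vs_{i_m} < \mathbf{v}_{(m-1)}$, contradicting Definition \ref{def:posexp} at step $m$. Hence $\mathbf{v}_{(m-1)} = vs_{i_m}$, and the problem reduces to existence and uniqueness of a PDS for $vs_{i_m}$ in $\mathbf{w}'$, provided we know $vs_{i_m} \leq w'$. In the second case, a symmetric argument forces the last factor to be $1$: were it $s_{i_m}$, then $\mathbf{v}_{(m-1)} = vs_{i_m}$ would have length $l(v)+1$, and the PDS inequality $\mathbf{v}_{(m-1)} < \mathbf{v}_{(m-1)} s_{i_m} = v$ would fail. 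Thus $\mathbf{v}_{(m-1)} = v$, and the reduction needs $v \leq w'$.

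Both required Bruhat relations, $vs_{i_m} \leq w'$ in the first case and $v \leq w'$ in the second, follow from the subword characterization of Bruhat order: pick a reduced subword for $v$ inside $\mathbf{w}$, and examine whether or not it ends in $s_{i_m}$. In the first case either the subword ends in $s_{i_m}$, in which case removing that factor gives a reduced subword for $vs_{i_m}$ inside $\mathbf{w}'$, or it does not, in which case $v \leq w'$ and transitivity of Bruhat order gives $vs_{i_m} < v \leq w'$. In the second case the subword cannot end in $s_{i_m}$, since that would force $vs_{i_m} < v$ contrary to assumption, so the subword lies inside $\mathbf{w}'$ already. Invoking the inductive hypothesis on $\mathbf{w}'$ and appending either $s_{i_m}$ or $1$ as dictated by the case completes the induction. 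I do not foresee any real obstacle; the only substantive input is the subword characterization of Bruhat order, and in fact the induction encodes a greedy right-to-left algorithm that includes $s_{i_j}$ in $\mathbf{v}^+$ precisely when it is a right descent of the current residual element of $W$ (initialized as $v$).
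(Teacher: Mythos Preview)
Your argument is correct: the right-to-left induction on $m$, with the case split on whether $s_{i_m}$ is a right descent of $v$, forces the last factor of any PDS for $v$ and reduces existence and uniqueness simultaneously to the length $m-1$ word $\mathbf{w}'$; the Bruhat inequalities you need are exactly what the subword property provides. One small point you might make explicit is that appending the forced last factor to a PDS in $\mathbf{w}'$ again yields a PDS in $\mathbf{w}$, i.e.\ that the condition $\mathbf{v}_{(m-1)} < \mathbf{v}_{(m-1)} s_{i_m}$ at step $m$ holds; you checked the contrapositive for uniqueness but not the direct statement for existence, though it is immediate from the case hypothesis.

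As for comparison with the paper: the paper does not prove this lemma at all. It is stated with attribution to Marsh and Rietsch \cite{marsh2004parametrizations} and used as a black box. Your inductive proof is the standard one (essentially the argument in Marsh--Rietsch, and equivalent to Deodhar's original observation that the greedy right-to-left procedure you describe at the end picks out the unique PDS), so there is nothing to contrast.
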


We now describe the Deodhar decomposition of the Richardson strata.
Marsh and Rietsch \cite{marsh2004parametrizations} gave explicit parameterizations for each Deodhar component, identifying each one with a subset in the group. Much of this appears implicitly in Deodhar's paper, but we refer to \cite{marsh2004parametrizations} for our exposition because these statements are made explicit there and the authors make references to Deodhar's paper wherever needed.

\begin{definition} \cite[Definition 5.1]{marsh2004parametrizations} 
\label{Gwv}
Let ${\bf w} = s_{i_1} \cdots s_{i_m}$ be a reduced expression for $w$, and let ${\bf v}$ be a distinguished subexpression. Define a subset ${\bf G_{w}^{v}}$ in $G$ by
\[ \mathbf{G_{w}^v} :=
\big{\{} g = g_{1}g_{2}\cdots g_{m}
\big{|} \begin{cases}
g_{l}=x_{i_l}(m_l) s_{i_l} &  \text{if $l \in J_{\bf v}^{\bullet}$},\\
g_{l}=y_{i_l}(p_l) &  \text{if $l \in J_{\bf v}^{\square}$};\\
g_l=s_{i_l} & \text{if $l \in J_{\bf v}^{\circ}$}
\end{cases}
\big{\}}
\]
\end{definition}

From\cite[Theorem 4,2]{marsh2004parametrizations} there is an isomorphism from ${{\mathbb C}^{*}}^{|J^{\square}_{\bf v}|} \times {\mathbb C}^{|J^{\bullet}_{\bf v}|} $ to ${\bf G^{v}_{w}}$. 

\begin{definition}(Deodhar Component) 
\label{def:compo}The Deodhar component $\mathbf{{\cal R}^v_w}$ is the image of ${\bf G^v_w}$ under the map ${\bf G^v_w} \subseteq U^{-} v B \cap B w B \rightarrow G/B$, sending $g$ to $gB$.
\end{definition}

Then from \cite[Theorem 1.1]{deodhar1985some} one has \cite[Corollary 1.2]{deodhar1985some}, also from Deodhar.
\begin{theorem} 
\label{deodhardecomposition}$R^v_w = \bigsqcup_{ \mathbf{v} \prec \mathbf{w}} \mathbf{{\cal R}^v_w}$ the union taken 
over all distinguished subexpressions ${\bf v}$ such that $\mathbf{v}_{(m)}=v$. The component 
 $\mathbf{{\cal R}^{v^+}_w}$ is open in $R^v_w$. 
\end{theorem}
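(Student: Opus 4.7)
The plan is to prove the theorem by induction on $m = \ell(w)$, exploiting the $\mathbb{P}^1$-fibration structure of flag varieties. The result was first established in Deodhar's original paper \cite{deodhar1985some} with the explicit parameterization due to Marsh--Rietsch \cite{marsh2004parametrizations}, and the natural route is to mirror their inductive strategy.

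First, I would set up the fibration. Write $\mathbf{w} = \mathbf{w}' s_{i_m}$ with $w' = w s_{i_m}$ and $\ell(w') = m - 1$, and let $\pi : G/B \to G/P_{s_{i_m}}$ be the standard projection onto the partial flag variety, a locally trivial $\mathbb{P}^1$-bundle. Schubert and opposite Schubert cells pull back and push forward in a controlled way under $\pi$, so in particular the Richardson stratum $R^v_w$ projects onto a Richardson stratum in $G/P_{s_{i_m}}$ whose fibers are open subsets of $\mathbb{P}^1$. The inductive hypothesis, applied to the pair $(\mathbf{v}_{(m-1)}, \mathbf{w}')$, provides the Deodhar decomposition of $R^{\mathbf{v}_{(m-1)}}_{w'}$, and the goal is to lift it through $\pi$.

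Next, I would carry out the case analysis at the last step. For each $v \le w$, the possible ways to extend a subexpression $\mathbf{v}' \prec \mathbf{w}'$ by a final factor $1$ or $s_{i_m}$ split according to whether $\mathbf{v}'_{(m-1)} s_{i_m}$ is larger or smaller than $\mathbf{v}'_{(m-1)}$, yielding three distinguished possibilities classified by membership of $m$ in $J_\mathbf{v}^{\circ}$, $J_\mathbf{v}^{\square}$, or $J_\mathbf{v}^{\bullet}$. Using the BN-pair relations to commute the factors $x_{i_m}(m_m) s_{i_m}$, $y_{i_m}(p_m)$, or $s_{i_m}$ past $B$, one identifies each case with a specific subset of the $\mathbb{P}^1$-fiber over a point of a Deodhar component of $R^{\mathbf{v}'_{(m)}}_{w'}$. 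Patching fiber-by-fiber gives the set-theoretic decomposition $R^v_w = \bigsqcup_{\mathbf{v} \prec \mathbf{w},\ \mathbf{v}_{(m)} = v} \mathcal{R}^{\mathbf{v}}_w$, and shows that the Marsh--Rietsch map from ${\mathbb{C}^*}^{|J^{\square}_\mathbf{v}|} \times \mathbb{C}^{|J^{\bullet}_\mathbf{v}|}$ to $\mathcal{R}^{\mathbf{v}}_w$ is an isomorphism onto a locally closed subvariety.

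For the openness of $\mathcal{R}^{v^+}_w$, I would argue by dimension. Each component has dimension $|J^{\square}_\mathbf{v}| + |J^{\bullet}_\mathbf{v}| = m - |J^{\circ}_\mathbf{v}|$, and any distinguished $\mathbf{v}$ with $\mathbf{v}_{(m)} = v$ satisfies $|J^{\circ}_\mathbf{v}| \ge \ell(v)$ with equality iff $\mathbf{v}$ is a PDS in the sense of Definition \ref{def:posexp}; uniqueness of the PDS is Lemma \ref{lem:uniquepos}. Thus $\dim \mathcal{R}^{v^+}_w = m - \ell(v) = \ell(w) - \ell(v) = \dim R^v_w$, so $\mathcal{R}^{v^+}_w$ is an open dense stratum while every other component has strictly smaller dimension.

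The hard part will be step two, namely verifying that the explicit parameterization of $\mathbf{G}^{\mathbf{v}}_w$ from Definition \ref{Gwv} is compatible with the inductive fiber description. This requires keeping track of how the factors $x_{i_l}(m_l) s_{i_l}$ and $y_{i_l}(p_l)$ propagate through the product and intersect $U^- v B$ versus $B w B$; the calculation is essentially a careful bookkeeping of commutation relations in the $SL_2$-triples attached to each $s_{i_l}$, and verifying disjointness across different distinguished subexpressions is where most of the real work lies.
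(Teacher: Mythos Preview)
The paper does not prove this theorem: it is stated purely as a citation of Deodhar \cite[Theorem 1.1, Corollary 1.2]{deodhar1985some}, with the parameterization attributed to Marsh--Rietsch \cite{marsh2004parametrizations}. So there is no proof in the paper to compare against; your proposal supplies what the paper deliberately omits.

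Your sketch is the standard route and is essentially correct: induction on $\ell(w)$ via the $\mathbb{P}^1$-fibration $G/B \to G/P_{s_{i_m}}$, a trichotomy at the last letter governed by $J^{\circ}$, $J^{\square}$, $J^{\bullet}$, and a dimension count for openness of the PDS component. One small point to tighten: the equality $\dim \mathcal{R}^{v^+}_w = \dim R^v_w$ together with local closedness gives density but not openness on its own, since the complement is a finite union of locally closed (not a priori closed) pieces. The clean way to close this gap is either to observe inductively that at each step the PDS choice corresponds to the open part of the $\mathbb{P}^1$-fiber (the $\mathbb{C}^*$ from $y_{i_m}(p_m)$), or to invoke Marsh--Rietsch's result that the parameterization of $\mathcal{R}^{v^+}_w$ is in fact an isomorphism onto an open subset. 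Otherwise your plan matches the literature.
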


Naturally when one is talking of the Deodhar decomposition of a Richardson strata in $G/P_{\hat{\alpha_r}}$, one can take the projections of the components in $G/B$ into $G/P_{\hat{\alpha_r}}$.  In \cite[Proposition 4.16]{kodama2013deodhar} the authors show that the Deodhar components of a Richardson strata in
$G/P_{\hat{\alpha_r}}$ are independent of $\mathbf{w}$ and only depends upon $w$. This follows from the observation that any two reduced decompositions ${\mathbf w}$ and ${\mathbf w'}$ of $w$ are related by a sequence of commuting transpositions $s_is_j = s_j s_i$.

\subsection{Quotients of Deodhar components in $X(w_{3,7})$}\label{s:deodhar}
Let us fix a reduced decomposition ${\bf w_{3,7}}=s_2
s_1s_4s_3s_6s_5s_2s_4s_3$ for the Weyl
group element $w_{3,7}$ with $X(w_{3,7})$ being the minimal Schubert
variety in $\grass{3}{7}$ admitting semistable points. In this section
we describe the GIT quotients of Richardson varieties in $X(w_{3,7})$ by
computing the various Deodhar strata in this Schubert variety 
and analyzing their quotients.  It will be useful to recall Definition \ref{Gwv} and the notation developed in Subsection \ref{sub:deod}. 

We begin with a corollary to Theorem \ref{thm:projnormal}
\begin{corollary}
\label{rem:prj}
The GIT quotient of Richardson varieties in $X({w_{3,7}})$ is projectively normal with respect to the descent of the $T$ linearized line bundle ${\cal L}(7 \omega_3)$.
\end{corollary}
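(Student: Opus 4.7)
The plan is to deduce the corollary from Theorem \ref{thm:projnormal} by a restriction argument. Since $X(w_{3,7})$ is the minimal Schubert variety admitting semistable points for $\mathcal{L}(7\omega_3)$, any Richardson variety inside $X(w_{3,7})$ admitting semistable points must have the form $X^v_{w_{3,7}}$ for some $v \leq w_{3,7}$, so I may assume this. Writing $R^v(m) = H^0(X^v_{w_{3,7}}, \mathcal{L}(7m\omega_3))^T$, the GIT quotient in question is $\mathrm{Proj}\bigoplus_m R^v(m)$, polarized by the descent of $\mathcal{L}(7\omega_3)$. Following Remark \ref{rem:projnormal}, projective normality will follow from (a) normality of the quotient and (b) surjectivity of the multiplication maps $R^v(1)^{\otimes m} \twoheadrightarrow R^v(m)$.

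The first step is to obtain the surjection $R(m) \twoheadrightarrow R^v(m)$, where $R(m)$ is as in Remark \ref{rem:projnormal}. By standard monomial theory for $SL_n$, the map $H^0(X(w_{3,7}), \mathcal{L}(7m\omega_3)) \to H^0(X^v_{w_{3,7}}, \mathcal{L}(7m\omega_3))$ is surjective: the standard monomials $p_{\tau_1}\cdots p_{\tau_{7m}}$ with $v \leq \tau_1 \leq \cdots \leq \tau_{7m} \leq w_{3,7}$ form a basis of the target and lift tautologically to standard monomials on $X(w_{3,7})$. Since $T$ is linearly reductive, passage to $T$-invariants preserves this surjectivity, yielding $R(m) \twoheadrightarrow R^v(m)$ in every degree.

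The second step is generation in degree one for $R^v$. Consider the commutative square whose top row is the multiplication $R(1)^{\otimes m}\to R(m)$, whose bottom row is $R^v(1)^{\otimes m}\to R^v(m)$, and whose vertical maps are induced by restriction. The top row is surjective by the generation-in-degree-one theorem established just before Theorem \ref{thm:main}; the right vertical map is surjective by the previous paragraph; the left vertical map, being the $m$-fold tensor of a surjection, is surjective. A trivial diagram chase then produces surjectivity of the bottom row. Finally, for normality of the quotient, Richardson varieties in the Grassmannian are normal (see \cite{brion2003geometric}), so the semistable locus in $X^v_{w_{3,7}}$ is normal; since $\gcd(3,7)=1$, the results recalled in Subsection \ref{ss:gitrich} give that semistability coincides with stability and the quotient is a geometric quotient with trivial $T_{ad}$-stabilizers, so normality descends to the quotient. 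Together with generation in degree one this yields the claimed projective normality.

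The only mildly delicate step is the degree-one surjection of $T$-invariants, but this is immediate from reductivity of $T$ combined with the classical standard monomial surjection. All the genuine combinatorial work has already been carried out in the proof of Theorem \ref{thm:projnormal}; here we merely transport that conclusion down to each Richardson subvariety.
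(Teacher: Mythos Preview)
Your proof is correct and follows essentially the same route as the paper: use the surjectivity of the restriction map $H^0(X(w_{3,7}),\mathcal{L}(7m\omega_3))\to H^0(X^v_{w_{3,7}},\mathcal{L}(7m\omega_3))$ (the paper cites \cite{brion2003geometric}, you invoke standard monomial theory), pass to $T$-invariants by linear reductivity, and then combine Theorem~\ref{thm:projnormal} with normality of the Richardson quotient. Your argument is somewhat more explicit about the diagram chase and about why the quotient is normal, but the underlying idea is the same.
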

\begin{proof}
Let $X^v_{w_{3,7}}$ be a Richardson variety in $X(w_{3,7})$.  From the proof of \cite[Proposition 1 ]{brion2003geometric} it follows that $H^0(X(w_{3,7}), {\cal L}(\omega_3)^{\otimes m})$ $\rightarrow$ $H^0(X^v_{w_{3,7}}, {\cal L}(\omega_3)^{\otimes m})$ is surjective. Since $T$ is linearly reductive it follows that
the map  $H^0(X(w_{3,7}), {\cal L}(\omega_3)^{\otimes m})^T$ $\rightarrow$ $H^0(X^v_{w_{3,7}}, {\cal L}(\omega_3)^{\otimes m})^T$
is also surjective.  From Theorem \ref{thm:projnormal} we know that the polarized variety $(\xtaumodt{3}{7}^{ss}_T({\cal L}(7 \omega_3)), \tilde{{\cal L}}(7 \omega_3))$ is projectively normal. Since $\richmodt{w_{3,7}}{v}{\mathcal{L}(7\omega_3)}$ is normal it follows that the GIT quotient of $X^v_{w_{3,7}}$ is projectively normal with respect to the descent line bundle.
\end{proof}

\begin{lemma}
Let $v=s_2 s_4 s_3$. Then $\richmodt{w_{3,7}}{v}{{\cal L}(7 \omega_3)}$ is a point.
\end{lemma}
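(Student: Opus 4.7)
The plan is to recognize that the element $v=s_2s_4s_3$ named in the lemma is precisely the minimal Richardson element $v_{3,7}$ identified in Section~\ref{s:taurngen}, so that the result is an immediate instance of Theorem~\ref{thm:pt}. This reduces the work to a short bookkeeping computation comparing two reduced expressions.

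First I would compute $v_{3,7}$ from the general formula. For $r=3$, $n=7$, the integers $a_i$ of Proposition~\ref{wrn} are $a_1=3$ and $a_2=5$, so $v_{3,7}=[1,a_1,a_2]=[1,3,5]$. Applying the standard reduced-expression recipe $(s_{b_1-1}\cdots s_1)(s_{b_2-1}\cdots s_2)(s_{b_3-1}\cdots s_3)$ to $(b_1,b_2,b_3)=(1,3,5)\in W^{S\setminus\{\alpha_3\}}$, the first bracket is empty, the second contributes $s_2$, and the third contributes $s_4s_3$. Hence $v_{3,7}=s_2s_4s_3=v$. At this point Theorem~\ref{thm:pt} closes the argument.

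As a sanity check (and in keeping with the Deodhar flavor of this section), I would also note the direct dimension count: $\ell(w_{3,7})=9$ and $\ell(v)=3$, so $\dim X^{v}_{w_{3,7}} = 6 = \dim T$. Since $\gcd(3,7)=1$, semistability and stability coincide on $X(w_{3,7})$, so the $T$-action on $(X^{v}_{w_{3,7}})^{ss}_T(\mathcal{L}(7\omega_3))$ has finite stabilizers and closed orbits, and the geometric quotient has dimension $6-6=0$. Irreducibility of the Richardson variety forces the quotient to be a single point.

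The only potentially subtle point is matching the reduced expression produced by the standard recipe with the specific word $s_2s_4s_3$ written in the lemma; once this identification is made, there is no real obstacle, since all the heavy lifting has already been done in Proposition~\ref{wrn} and Theorem~\ref{thm:pt}.
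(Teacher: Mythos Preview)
Your proposal is correct. You recognize $v=s_2s_4s_3$ as $v_{3,7}=[1,3,5]$ and invoke Theorem~\ref{thm:pt}; the accompanying dimension count $\ell(w_{3,7})-\ell(v)=9-3=6=\dim T$ is also valid and is in fact exactly the argument behind Theorem~\ref{thm:pt}.

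However, your route differs from the paper's. The paper's intent in this section is explicitly \emph{not} to cite Section~\ref{s:taurngen}; as stated at the start of Subsection~\ref{s:deodhar}, these lemmas are reproved via direct computation to illustrate the Deodhar machinery. Accordingly, the paper argues instead that among the seven $T$-invariant sections $y_1,\dots,y_7$ of $H^0(X(w_{3,7}),\mathcal{L}(7\omega_3))$, only $y_1$ has first column $\geq [1,3,5]$ and hence only $y_1$ restricts nonzero to $X^{v}_{w_{3,7}}$; it then evaluates $y_1$ on the open Deodhar component $\mathbf{{\cal R}^{v}_{w_{3,7}}}$ (for the subexpression $111111\,s_2s_4s_3$), obtaining the monomial $p_1p_2^4p_3^2p_4^5p_5^3p_6^6$, which is nowhere vanishing on the cell. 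Combined with projective normality (Corollary~\ref{rem:prj}), this yields that the quotient is $\mathrm{Proj}$ of the algebra generated by a single nonvanishing section, i.e.\ a point. Your argument is cleaner and more conceptual; the paper's argument is hands-on and serves as a warm-up for the subsequent lemmas where the explicit section evaluations on Deodhar cells are actually needed to identify the descent line bundles.
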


\begin{proof}
\eat{This is clear from \ref{lm:point} since $w_{3,7}v^{-1}$ is the Coxeter
 word $s_2s_1s_4s_3s_6s_5$}
 The only torus-invariant section of $H^0(X(w_{3,7}),{\cal L}(7 \omega_3))$
 which is non zero on $X^v_{w_{3,7}}$ is the section $y_1$. Consider the
 Deodhar component of $X^v_{w_{3,7}}$ corresponding to the subexpression ${\mathbf
   v}=111111s_2 s_4 s_3$. This
section evaluated on a matrix in $\mathbf{G^{v}_{w_{3,7}}}$ is
  $p_1p_2^4p_3^2p_4^5p_5^3p_6^6$. Using the reduced expression for ${\bf w_{3,7}}$,  note that the weight of this monomial is $\alpha_2 + 4\alpha_1+2 \alpha_4+5 \alpha_3 + 3 \alpha_6 + 6 \alpha_5$.
\end{proof}

\begin{lemma}
let $v=s_2s_3$. Then $\richmodt{w_{3,7}}{v}{{\cal L}(7 \omega_3)}$ is isomorphic to ${\mathbb P}^1$ and the descent of ${\cal L}(7 \omega_3)$ is ${\cal O}(1)$.
\end{lemma}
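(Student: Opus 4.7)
The plan is to use the preceding theorem of Section \ref{s:taurngen} to obtain the $\mathbb{P}^1$ statement for free, and then to identify the descent bundle by a direct dimension count. First I would translate $v = s_2 s_3$ into the one-line notation on $I(3,7)$, obtaining $v = [1,3,4]$. Comparing with $v_{3,7} = [1,3,5]$ we see that $v = s_4 v_{3,7}$ with $s_4 = (4,5) = (a_2 - 1, a_2)$, so the theorem of Section \ref{s:taurngen} immediately yields $\richmodt{w_{3,7}}{v}{\mathcal{L}(7\omega_3)} \cong \mathbb{P}^1$.

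To identify the descent line bundle, I would compute $\dim H^0(X^v_{w_{3,7}}, \mathcal{L}(7\omega_3))^T$, which equals the dimension of global sections of the descent bundle on $\mathbb{P}^1$. By standard monomial theory for Richardson varieties (Brion), this space has a basis indexed by semistandard tableaux of shape $3 \times 7$ with content $(3,3,3,3,3,3,3)$, first column $\geq v = [1,3,4]$ in Bruhat order, and last column $\leq w_{3,7} = [3,5,7]$. By Observation \ref{obs:firstcol} the first column is one of $[1,2,3], [1,2,4], [1,2,5], [1,3,4], [1,3,5]$, and imposing the Bruhat condition $\geq [1,3,4]$ leaves only $[1,3,4]$ and $[1,3,5]$. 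For each such first column, the content conditions together with semistandardness determine the tableau uniquely: row $1$ is forced to be $1,1,1,2,2,2,3$; row $2$ must begin with two $3$s; and the only remaining freedom --- the split of the three $4$s and three $5$s between rows $2$ and $3$ --- is pinned down by whether the bottom entry of the first column is $4$ or $5$. The resulting two tableaux are precisely $y_2$ and $y_1$ from the list in Section \ref{s:tau37}, so $\dim H^0(X^v_{w_{3,7}}, \mathcal{L}(7\omega_3))^T = 2$. Since $h^0(\mathbb{P}^1, \mathcal{O}(d)) = d+1$, the descent is $\mathcal{O}_{\mathbb{P}^1}(1)$.

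In the spirit of this section one could alternatively verify everything through the Deodhar decomposition: the positive distinguished subexpression of $v$ inside the reduced expression $s_2 s_1 s_4 s_3 s_6 s_5 s_2 s_4 s_3$ for $w_{3,7}$ is $1\cdot 1 \cdot 1 \cdot 1 \cdot 1 \cdot 1 \cdot s_2 \cdot 1 \cdot s_3$, giving an open Deodhar component $\mathbf{G}^v_{w_{3,7}} \cong (\mathbb{C}^*)^7$; evaluating $y_1$ and $y_2$ on a general matrix in $\mathbf{G}^v_{w_{3,7}}$ yields two non-proportional monomials in the parameters whose ratio realises the affine coordinate on the quotient $\mathbb{P}^1$. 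The main obstacle in either approach is the combinatorial bookkeeping, but with Observation \ref{obs:firstcol} in hand the enumeration of $T$-invariant tableaux is immediate.
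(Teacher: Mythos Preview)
Your argument is correct. You invoke the general $\mathbb{P}^1$ theorem of Section~\ref{s:taurngen} to get the quotient, then count the $T$-invariant standard monomials directly via Observation~\ref{obs:firstcol} and Brion's standard monomial theory for Richardson varieties, obtaining exactly $y_1$ and $y_2$. One implicit step you should make explicit is the identification $H^0(X^v_{w_{3,7}},\mathcal{L}(7\omega_3))^T \cong H^0(\mathbb{P}^1,\tilde{\mathcal{L}})$; this follows from Corollary~\ref{rem:prj} (projective normality of the quotient), which the paper invokes for precisely this purpose.

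The paper's own proof is essentially your ``alternative'' paragraph: it works directly on the open Deodhar component for the PDS $1\,1\,1\,1\,1\,1\,s_2\,1\,s_3$ (the paper has a typo, printing only eight letters), asserts that $y_1,y_2$ are the only nonzero $T$-invariants there and are algebraically independent, and concludes via Corollary~\ref{rem:prj}. So your Deodhar sketch matches the paper exactly, while your primary argument replaces the Deodhar parametrisation by the Section~\ref{s:taurngen} theorem plus a clean tableau enumeration. Both routes hinge on the same two invariants $y_1,y_2$ and on projective normality; your version has the advantage of making the tableau count transparent (and of avoiding the explicit matrix computation on $\mathbf{G}^v_{w_{3,7}}$), while the paper's version is more in keeping with the Deodhar theme of Section~\ref{s.deodhar}.
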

\begin{proof} On the open Deodhar component corresponding to the 
reduced subexpression $1 1 1 1 1 s_2 1 s_3$ the only nonzero $T$-invariant standard monomials of shape $7\omega_3$ are $y_1$,
$y_2$.  These are algebraically independent.  The lemma follows now from Corollary \ref{rem:prj}.
\end{proof}

\begin{lemma}
Let $v = s_4 s_3$. Then $\richmodt{w_{3,7}}{v}{{\cal L}(7 \omega_3)}$ is isomorphic to ${\mathbb P}^1$ and the descent of ${\cal L}(7 \omega_3)$ to the GIT quotient is ${\cal O}(2)$.
\end{lemma}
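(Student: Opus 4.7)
The plan is to mirror the proof of the preceding lemma, working on the open Deodhar component. First I would identify $v=s_4s_3$ as the subset $[1,2,5]\in I(3,7)$ by applying $s_3$ then $s_4$ to $(1,2,3)$. Next, I would locate the positive distinguished subexpression of $v$ inside the chosen reduced word $\mathbf{w_{3,7}}=s_2s_1s_4s_3s_6s_5s_2s_4s_3$. By Lemma \ref{lem:uniquepos} it is unique, and a direct check shows it is $1\,1\,1\,1\,1\,1\,1\,s_4\,s_3$, using $s_4$ in position $8$ and $s_3$ in position $9$; the earlier occurrence at positions $3,4$ fails the PDS condition because $(s_4s_3)\cdot s_3=s_4$ has strictly smaller length, so the requirement $\mathbf{v}_{(8)}<\mathbf{v}_{(8)}s_{i_9}$ breaks at $j=9$.

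Second, I would enumerate the $T$-invariant standard monomials in $H^0(X^v_{w_{3,7}},\mathcal{L}(7\omega_3))$. By standard monomial theory, the restriction of $y_i$ to $X^v_{w_{3,7}}$ is nonzero iff the first column of its indexing tableau is $\geq v=[1,2,5]$ in the Bruhat order. Scanning the seven tableaux listed in Section \ref{s:tau37}, the surviving sections are precisely $y_1$ (first column $[1,3,5]$) together with $y_3$ and $y_5$ (both with first column $[1,2,5]$); the remaining $y_2,y_4,y_6,y_7$ have first columns $[1,3,4],[1,2,4],[1,2,4],[1,2,3]$, none of which dominates $[1,2,5]$. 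These three surviving standard monomials are linearly independent in $H^0(X^v_{w_{3,7}},\mathcal{L}(7\omega_3))$, so the invariant subspace has dimension exactly $3$.

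Third, I apply the theorem from Subsection \ref{ss:gitrich}: since $v=s_2\cdot v_{3,7}$ with $s_2=(a_1-1,a_1)$, the GIT quotient is isomorphic to $\mathbb{P}^1$. Finally, I invoke Corollary \ref{rem:prj}: projective normality of the Richardson GIT quotient gives
\[
\dim H^0\bigl(\mathbb{P}^1,\tilde{\mathcal{L}}(7\omega_3)\bigr)=\dim H^0\bigl(X^v_{w_{3,7}},\mathcal{L}(7\omega_3)\bigr)^T=3,
\]
which forces $\tilde{\mathcal{L}}(7\omega_3)\cong\mathcal{O}_{\mathbb{P}^1}(2)$, as claimed.

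The main obstacle is the careful enumeration of standard monomials that survive restriction to $X^v_{w_{3,7}}$—one needs to perform the Bruhat comparison cleanly for each of the seven $y_i$. An alternative route would be to exhibit an explicit quadratic relation among $y_1,y_3,y_5$ obtained from the straightening rules on $X(w_{3,7})$ (analogous to Remark \ref{rem:y5y7}) and interpret the image in $\mathbb{P}^2$ as a conic, but combining the $\mathbb{P}^1$ statement with the dimension count afforded by Corollary \ref{rem:prj} gives the cleanest argument.
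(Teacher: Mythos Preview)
Your argument is correct, and it reaches the same conclusion using the same three sections $y_1,y_3,y_5$ and the same appeal to Corollary~\ref{rem:prj}, but it arrives there by a different route than the paper. The paper stays entirely within the Deodhar framework of Section~\ref{s.deodhar}: it writes down the open component $\mathcal{R}^{\mathbf v}_{\mathbf{w_{3,7}}}$ for $\mathbf v=1111111s_4s_3$, evaluates the three sections there explicitly as $y_1=pX^2$, $y_3=pXY$, $y_5=pY^2$ with $p=p_1p_2^4p_3^2p_4^5p_5^3p_6^6p_7^5$, $X=p_1+p_7$, $Y=p_1$, and reads off both the $\mathbb{P}^1$ and the $\mathcal{O}(2)$ directly from this Veronese parametrisation (together with projective normality). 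You instead bypass the explicit evaluation entirely: you identify $v=s_2\,v_{3,7}$ and quote the general $\mathbb{P}^1$-theorem of Subsection~\ref{ss:gitrich} for the shape of the quotient, and then deduce $\mathcal{O}(2)$ from the dimension count $\dim R(1)=3$ via projective normality. Your route is cleaner and avoids any computation in coordinates; the paper's route is the point of Section~\ref{s.deodhar}, namely to illustrate how the Deodhar parametrisation makes such quotients computable by hand, and it yields the explicit relation $y_1y_5=y_3^2$ on the nose. Note incidentally that your opening paragraph on locating the PDS, while harmless, plays no role in the rest of your argument.
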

\begin{proof}
The three nonzero sections on the open Deodhar cell
corresponding to the subexpression ${\mathbf v}=1111111s_4s_3$ are
$y_1,y_3, y_5$. Let $p=p_1p_2^4p_3^2p_4^5p_5^3p_6^6p_7^5$. Note
that $p$ is nowhere vanishing on the Deodhar cell. Let 
$X=(p_1+p_7), Y=p_1$. It can be checked that on the open Deodhar cell
$y_1$ evaluates to $pX^2$, $y_3$ to $pXY$ and $y_5$ to $pY^2$. The lemma follows from Corollary \ref{rem:prj}.
\end{proof}

\begin{lemma}
Let $v = s_3$. Then $\richmodt{w_{3,7}}{v}{{\cal L}(7 \omega_3)}$ is isomorphic to ${\mathbb P}^1
\times {\mathbb P}^1$ and the
descent of the line bundle to the GIT quotient is ${\cal O}(2) \boxtimes {\cal O}(1)$.
\end{lemma}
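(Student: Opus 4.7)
The plan is to follow the pattern of the three preceding lemmas: identify the $T$-invariant standard monomials that do not vanish on $X^{v}_{w_{3,7}}$ for $v=s_3$, compute the open Deodhar cell, express each surviving section as a monomial in explicit coordinates on the cell, and then conclude using Corollary \ref{rem:prj}.

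First I would determine which of the seven standard monomials $y_1,\ldots,y_7$ survive. The Weyl element $s_3$, viewed inside $W^{S\setminus\{\alpha_3\}}$, corresponds to the index tuple $[1,2,4]$. A standard monomial is nonzero on $X^{s_3}_{w_{3,7}}$ precisely when its first column $\tau_1$ satisfies $\tau_1\geq [1,2,4]$, i.e.\ its third entry is at least $4$. Scanning the explicit tableaux listed in Section \ref{s:tau37}, only $y_7$ (whose first column is $[1,2,3]$) fails this test; the six sections $y_1,\ldots,y_6$ survive. This already gives the correct count $6=\dim H^{0}({\mathbb P}^1\times{\mathbb P}^1,\,{\cal O}(2)\boxtimes {\cal O}(1))$, and matches the dimension count $\dim X^{s_3}_{w_{3,7}}-\dim T=8-6=2$ for the quotient.

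Next I would analyse the open Deodhar component. Using the reduced word $\mathbf{w_{3,7}}=s_2s_1s_4s_3s_6s_5s_2s_4s_3$ from Section \ref{s:deodhar}, the unique PDS for $v=s_3$ (Lemma \ref{lem:uniquepos}) skips the first eight factors and takes only the final $s_3$, so $J^{\square}_{\mathbf v}=\{1,\ldots,8\}$, $J^{\bullet}_{\mathbf v}=\emptyset$, and by Definition \ref{Gwv} the cell is a torus $({\mathbb C}^{*})^{8}$ parametrised by
\[
g=y_2(p_1)\,y_1(p_2)\,y_4(p_3)\,y_3(p_4)\,y_6(p_5)\,y_5(p_6)\,y_2(p_7)\,y_4(p_8)\,s_3.
\]
I would then evaluate the Pl\"ucker coordinates appearing in $y_1,\ldots,y_6$ on $g$. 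Building on the parameters $X=p_1+p_7$, $Y=p_1$ already used in the $v=s_4s_3$ case, I expect to find a second pair $(U,V)$ formed from the parameters $p_3,p_8$ (the two copies of $s_4$ in the word), a common monomial factor $p$ in the $p_i$'s, and a factorisation
\[
y_1=pY^{2}U,\quad y_3=pXYU,\quad y_5=pX^{2}U,\quad y_2=pY^{2}V,\quad y_4=pXYV,\quad y_6=pX^{2}V.
\]
These are exactly the sections of ${\cal O}(2)\boxtimes {\cal O}(1)$ on ${\mathbb P}^1\times{\mathbb P}^1$ under the Segre--Veronese embedding.

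Finally I would close the argument in two complementary ways. From the relations of Theorem \ref{thm:main}, setting $Y_7=0$ gives $y_1y_4=y_2y_3$, $y_1y_5=y_3^{2}$, $y_1y_6=y_3y_4=y_2y_5$, $y_3y_6=y_4y_5$, $y_2y_6=y_4^{2}$, which are precisely the defining ideal of the Segre--Veronese image of ${\mathbb P}^1\times{\mathbb P}^1$ in ${\mathbb P}^5$ (the $2\times 2$ minors of $\left(\begin{smallmatrix} y_5 & y_3 & y_1\\ y_6 & y_4 & y_2\end{smallmatrix}\right)$ together with the two conic conditions $y_1y_5=y_3^2$ and $y_2y_6=y_4^2$). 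By Corollary \ref{rem:prj} the GIT quotient is projectively normal, so its homogeneous coordinate ring is generated by $y_1,\ldots,y_6$ subject to exactly these relations; combined with irreducibility and the dimension count $2$, this identifies $\richmodt{w_{3,7}}{s_3}{{\cal L}(7\omega_3)}$ with ${\mathbb P}^1\times{\mathbb P}^1$ and pins down the descent line bundle as ${\cal O}(2)\boxtimes {\cal O}(1)$. The main obstacle is the bookkeeping in the middle step: writing $y_i$ explicitly as polynomials in $p_1,\ldots,p_8$ and choosing $(U,V)$ so that the factorisation above holds cleanly; this is mechanical but tedious, and the right choice of coordinates must be compatible with the choices already fixed in the proofs of the preceding two lemmas so that specialisations (e.g.\ to $v=s_4s_3$ upon killing the $U,V$ direction) are consistent.
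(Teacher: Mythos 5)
Your main argument follows the paper's approach exactly: the paper takes $A=p_3$, $B=p_3+p_8$, $X=p_1+p_7$, $Y=p_1$, $p=p_1p_2^4p_3^2p_4^5p_5^3p_6^6p_7^5p_8^6$, and finds on the open Deodhar cell that $(y_2,y_4,y_6,y_1,y_3,y_5)=p\cdot(X^2A,XYA,Y^2A,X^2B,XYB,Y^2B)$, then invokes Corollary \ref{rem:prj}. Your guessed factorization $y_1=pY^2U$, $y_5=pX^2U$ has $X$ and $Y$ swapped relative to both the paper and the preceding lemma (where $y_1=pX^2$), so as written it would not specialize consistently; this is cosmetic, but the paper's labeling is the one you should use.

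Your alternative route --- setting $Y_7=0$ in the presentation of Theorem \ref{thm:main} --- is genuinely different from what the paper does, and is worth recording. It bypasses the Pl\"ucker-coordinate computation on the cell: one observes that the restriction $R\to R^{s_3}:=\oplus_m H^0(X^{s_3}_{w_{3,7}},{\cal L}(7\omega_3)^{\otimes m})^T$ is surjective (Corollary \ref{rem:prj}), that $y_7$ lies in its kernel, and that ${\mathbb C}[Y_1,\ldots,Y_6]/({\cal I}+(Y_7))$ and $R^{s_3}$ are both three-dimensional graded domains, forcing the induced surjection to be an isomorphism; the former is visibly the coordinate ring of the Segre--Veronese image of ${\mathbb P}^1\times{\mathbb P}^1$ under ${\cal O}(2)\boxtimes{\cal O}(1)$. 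One small slip in the bookkeeping: the six relations on killing $Y_7$ are exactly the $2\times 2$ minors of the scroll matrix $\bigl(\begin{smallmatrix} y_2 & y_4 & y_1 & y_3 \\ y_4 & y_6 & y_3 & y_5\end{smallmatrix}\bigr)$; your description of them as the minors of a $2\times 3$ matrix together with two conic conditions accounts for only five, omitting $y_1y_6=y_3y_4$ (which you do list, but which is not among the five you name).
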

\begin{proof}
Let $p=p_1p_2^4p_3^2p_4^5p_5^3p_6^6p_7^5p_8^6$. 
Let $A=p_3, B=p_3+p_8$. Let $X=(p_1+p_7), Y=p_1$.
Note that $p_3$ and $p_8$ are algebraically independent and so $A, B$ are
algebraically independent.
Since $p_1$ and $p_7$ are algebraically independent so are $X,Y$.

First note that on the open Deodhar cell corresponding to the distinguished
subexpression $1 1 1 1 1 1 1 1 s_3$, $p$ is nowhere vanishing. On the open cell
$y_1$ evaluates to $pBX^2$.
The section $y_5$ evaluates to $pBY^2$. On the other hand $y_3$ evaluates to 
$pXYB$.
Likewise $y_2$ evaluates to $pAX^2$, $y_6$ evaluates to $pAY^2$.
And $y_4$ evaluates to $pXYA$. 

So, upto a multiple of $p$, the sections $y_2,y_4,y_6,
y_1, y_3,y_5$ can be respectively written as  $(X^2A,XYA,
Y^2A, X^2B, XYB, Y^2B)$. Using Corollary \ref{rem:prj} it follows that the GIT quotient is isomorphic to
${\mathbb P}^1 \times  {\mathbb P}^1$ embedded as ${\cal O}(2)
\boxtimes {\cal O}(1)$. 
\end{proof}

In the next lemma we give conditions guaranteeing when a section 
of the line bundle ${\cal L}(n \omega_r)$ on $X^v_w$ restricts to a homogenous polynomial on the Richardson strate in $X^v_w$.
  
\begin{lemma}
\label{lm:homog}
Let $u \in W, v \in W^{S\setminus\{\alpha_r\} } $ be such that $w =uv \in W^{S\setminus\{\alpha_r\} }$ and $l(uv) = l(u) + l(v)$. Fix a reduced expression for $u =s_{i_1} \cdots s_{i_k}$ and a
reduced expression for $v= s_{i_{k+1}} \cdots s_{i_m} $ such that $\mathbf{w} = s_{i_1} \cdots s_{i_k}.s_{i_{k+1}} \cdots s_{i_m}$ is a reduced expression for $w$. Consider $\mathbf{v} =1\cdots1s_{i_{k+1}}.\cdots s_{i_m}$, a distinguished subexpression of $\mathbf{w}$. ${\cal R}^{\mathbf v}_{\mathbf w}$ is the unique open Deodhar component of $R^v_w$. The restriction of any section $s\in H^0(X^v_w, {\cal L}(n \omega_r))$ to $R^v_w$ is a homogeneous polynomial in $p_1,p_2,\cdots,p_k$ having degree = ht $v(n\omega_r)$.
\end{lemma}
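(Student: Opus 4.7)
My plan is in three parts. First, I verify that $\mathbf{v} = 1\cdots 1\cdot s_{i_{k+1}}\cdots s_{i_m}$ is the positive distinguished subexpression of $v$ in $\mathbf{w}$. For $l \le k$, $\mathbf{v}_{(l-1)} = \mathbf{v}_{(l)} = 1$, so the PDS condition $1 < s_{i_l}$ is automatic; for $l > k$ the length strictly increases at each step because $s_{i_{k+1}} \cdots s_{i_m}$ is a reduced expression for $v$. Hence $J_{\mathbf{v}}^{\bullet} = \emptyset$, $J_{\mathbf{v}}^{\square} = \{1,\ldots,k\}$, and $J_{\mathbf{v}}^{\circ} = \{k+1,\ldots,m\}$. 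By Lemma~\ref{lem:uniquepos} this is the unique PDS, so by Theorem~\ref{deodhardecomposition} the component $\mathcal{R}^{\mathbf v}_{\mathbf w}$ is open in $R^v_w$. Definition~\ref{Gwv} realizes it as an affine space with coordinates $p_1,\ldots,p_k$ via $(p_1,\ldots,p_k)\mapsto y_{i_1}(p_1)\cdots y_{i_k}(p_k)\cdot v$, so any section $s \in H^0(X^v_w, \mathcal{L}(n\omega_r))$ restricts to a regular function $f(p_1,\ldots,p_k)$, and by density this determines the restriction to all of $R^v_w$.

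Second, I extract homogeneity from the $T$-action. Let $\delta \colon \mathbb{G}_m \to T$ be the cocharacter satisfying $\alpha_i(\delta(c)) = c$ for every simple root $\alpha_i$; it exists because the simple roots form a $\mathbb{Z}$-basis of the character lattice of the adjoint torus of $SL_n$. Using the relation $t\,y_j(p)\,t^{-1} = y_j(\alpha_j(t)^{-1} p)$ and commuting $\delta(c)$ past each $y_{i_l}(p_l)$, one obtains
\[
\delta(c)\cdot y_{i_1}(p_1)\cdots y_{i_k}(p_k)\cdot v \;=\; y_{i_1}(c^{-1}p_1)\cdots y_{i_k}(c^{-1}p_k)\cdot v \cdot (v^{-1}\delta(c)v),
\]
with $v^{-1}\delta(c)v \in T \subseteq B$. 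The $B$-equivariance of sections of $\mathcal{L}(n\omega_r)$ then gives
\[
s(\delta(c)\,g) \;=\; (n\omega_r)(v^{-1}\delta(c)v)\cdot f(c^{-1}p_1,\ldots,c^{-1}p_k) \;=\; c^{\mathrm{ht}(v(n\omega_r))}\, f(c^{-1}p_1,\ldots,c^{-1}p_k),
\]
where I use that $n\omega_r$ lies in the root lattice (because $n$ annihilates $\omega_r$ modulo the root lattice for $SL_n$), so $v(n\omega_r)$ does as well, and any root-lattice weight $\sum n_i\alpha_i$ evaluates on $\delta(c)$ to $c^{\sum n_i}$. For a zero-weight section $s(\delta(c) g) = s(g) = f(p_1,\ldots,p_k)$, which forces $f$ to be homogeneous of total degree $\mathrm{ht}(v(n\omega_r))$. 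The argument extends to any $T$-eigensection with an additional weight-dependent factor, and arbitrary sections split into weight-vectors by semisimplicity.

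The main obstacle will be fixing sign and equivariance conventions precisely so that the exponent is read off as $+\mathrm{ht}(v(n\omega_r))$ rather than its negative: one has to match the $B$-linearization of $\mathcal{L}(n\omega_r)$ (whether $s(gb) = (n\omega_r)(b)\,s(g)$ or its inverse) with the convention for the Weyl action on characters implicit in the notation $v(n\omega_r)$. A sanity check is available directly from the lemmas preceding this one: e.g.\ for $v = s_4 s_3$ in $X(w_{3,7})$ the sections $y_1, y_3, y_5$ restrict to polynomials of total degree $28$, and one computes $\mathrm{ht}(v(7\omega_3)) = 4+8+5+2+6+3 = 28$. Once the conventions are pinned down, the rest of the proof is a direct unwinding of the Deodhar parametrization together with the semisimple $T$-action on the cell coordinates and on sections.
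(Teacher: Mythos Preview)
Your approach is essentially the same as the paper's: identify $\mathbf v$ as the unique PDS, parametrize the open Deodhar component as $y_{i_1}(p_1)\cdots y_{i_k}(p_k)\,v$, and read off homogeneity from the $T$-action on the coordinates $p_l$ versus the $B$-equivariance of sections. The paper compresses the weight argument into one line (``if $a_{\mathbf m}\neq 0$ then $wt(s)=wt(p_1^{m_1}\cdots p_k^{m_k})=v(n\omega_r)$''), whereas you make it explicit via the regular cocharacter $\delta$ with $\alpha_i(\delta(c))=c$; your version is more transparent about why the exponent comes out as $\mathrm{ht}(v(n\omega_r))$, and your sanity check against the $v=s_4s_3$ example in $X(w_{3,7})$ is exactly how one should pin down the sign conventions. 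One small remark: both the paper's formulation and your extension to ``any section'' via weight-decomposition tacitly assume $s$ is a $T$-eigenvector (for a non-invariant eigenvector the degree shifts by the height of its weight), but since the lemma is only ever applied to $T$-invariant sections this is harmless.
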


\begin{proof}

Note that ${\mathbf v}$ is the unique positive distinguished subexpression for $v$ in ${\mathbf w}$ and so $\mathcal{R}_\mathbf{w}^\mathbf{v}$ is the unique open Deodhar component of $R^v_w$. 

Matrices in 
${\bf G^v_w}$ are of the form $y_{i_1}(p_1) y_{i_2}(p_2) 
\ldots y_{i_k}(p_k) s_{i_{k+1}}\cdots s_{i_m}$. From this identification we see that the section $s$ restricted to this Deodhar component is 
$s_{|\mathcal{R}_w^v} =  \sum_{\mathbf{m}} a_{\mathbf{m}} p_1^{m_1} \ldots p_k^{m_k}$ where $\mathbf{m} = (m_1,..m_t)$. If $a_{\mathbf{m}} \neq 0$ then $wt (s) = wt (p_1^{m_1} \ldots p_k^{m_t}) = v(n\omega_r)$. In particular $deg (p_1^{m_1} \ldots x_k^{m_k}) =  ht (v(n\omega_r))$ . 
\end{proof}

Finally we prove 
\begin{theorem}
The polarized variety $(\richmodt{w_{3,7}}{id}{{\cal L}(7 \omega_3)}, \tilde{{\cal L}}(7 \omega_3))$ is a rational
normal scroll.
\end{theorem}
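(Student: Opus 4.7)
The plan is to realise $R = \bigoplus_m H^0(X(w_{3,7}),\mathcal{L}(7\omega_3)^{\otimes m})^T$, identified in Theorem \ref{thm:main} with $\mathbb{C}[Y_1,\ldots,Y_7]/\mathcal{I}$, as a determinantal ring. A direct check shows that the six generators (\ref{first})--(\ref{sixth}) of $\mathcal{I}$ are precisely the six $2\times 2$ minors of
\begin{equation*}
M \;=\; \begin{pmatrix} Y_1 & Y_2 & Y_3 & Y_4 \\ Y_3-Y_7 & Y_4-Y_7 & Y_5 & Y_6 \end{pmatrix};
\end{equation*}
for instance the $(1,2)$-minor $Y_1(Y_4-Y_7)-Y_2(Y_3-Y_7)$ rearranges to (\ref{first}), the $(1,3)$-minor $Y_1Y_5-Y_3(Y_3-Y_7)$ is (\ref{second}), and similarly for the remaining four.

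Because the eight entries of $M$ satisfy the single linear relation $M_{13}-M_{14}-M_{21}+M_{22}=0$, the GIT quotient $\mathrm{Proj}(R)\subset\mathbb{P}^6$ is identified with the intersection of the Segre variety $\mathbb{P}^1\times\mathbb{P}^3\subset\mathbb{P}^7$, itself the rational normal scroll $S(1,1,1,1)$ of degree $4$, with the hyperplane cut out by that relation. Since $\dim\mathrm{Proj}(R)=\dim X(w_{3,7})-\dim T=9-6=3$ matches $\dim(\mathbb{P}^1\times\mathbb{P}^3)-1$, the hyperplane meets the Segre properly. Hence $\mathrm{Proj}(R)$ is a non-degenerate, irreducible variety of codimension $3$ and degree $4=\mathrm{codim}+1$ in $\mathbb{P}^6$, i.e.\ a variety of \emph{minimal degree}.

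Finally I would invoke the classical classification of varieties of minimal degree in projective space: any such variety is either a cone over a lower-dimensional one, a quadric hypersurface, the Veronese surface in $\mathbb{P}^5$, or a rational normal scroll. Quadrics and the Veronese surface are ruled out by the numerics (codimension $3$, dimension $3$), and the smoothness of $\xtaumodt{3}{7}^{ss}_T(\mathcal{L}(7\omega_3))$ established in Subsection \ref{ss:gitrich} excludes cones. Hence $(\richmodt{w_{3,7}}{id}{\mathcal{L}(7\omega_3)},\tilde{\mathcal{L}}(7\omega_3))$ is a smooth rational normal scroll $S(a_1,a_2,a_3)$ with $a_1+a_2+a_3=4$ and each $a_i\geq 1$, necessarily $S(2,1,1)$ after reordering. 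The only non-routine step is writing down $M$; its form is essentially forced by the shape of the relations (\ref{first})--(\ref{sixth}), and once guessed the identification of minors with relations is verified by inspection.
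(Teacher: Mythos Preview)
Your proof is correct and shares the paper's core idea: both arguments observe that the six relations \eqref{first}--\eqref{sixth} are exactly the $2\times 2$ minors of a $2\times 4$ matrix of linear forms (your $M$ is the paper's matrix with the columns reordered). The paper stops there and simply asserts the conclusion; you go further, embedding $\mathbb{P}^6$ linearly into $\mathbb{P}^7$ via the eight entries of $M$, identifying the quotient with a hyperplane section of the Segre $\mathbb{P}^1\times\mathbb{P}^3$, and then invoking the minimal-degree classification together with the smoothness result of Subsection~\ref{ss:gitrich} to pin down the scroll type $S(2,1,1)$. This extra work is not wasted: the bare fact that an ideal is generated by $2\times 2$ minors of a $2\times 4$ matrix of linear forms does not by itself force the variety to be a scroll (one needs, in Eisenbud's language, that the matrix be $1$-generic, or equivalently that the variety have the expected dimension), so your dimension check and appeal to smoothness genuinely close a gap that the paper leaves implicit.
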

\begin{proof}
The relations \ref{eq:rels} given before Theorem \ref{thm:main} describe the homogenous ideal defining the polarized variety. These defining relations can be written succinctly in a matrix form 
\[ rank\begin{pmatrix}
Y_1 & Y_3 & Y_4 & Y_2 \\
Y_3-Y_7& Y_5& Y_6& Y_4 -Y_7\\
\end{pmatrix} \leq 1
\]
For example the minor corresponding to the first two columns above gives us $Y_1Y_5=Y_3^2-Y_3Y_7$, which is \ref{second},  and the minor corresponding to columns 1 and 3 gives relation \ref{third} shown there. 
So the polarized variety is a rational normal scroll.
\end{proof}

\section{Projective normality of the GIT quotient of $G_{2,n}$}\label{s:pnormalg2n}

In this section we study the GIT quotient of $G_{2,n}$ with respect to the $T$-linearized line bundle $\mathcal{L}(n\omega_2)$ for $n$ odd. As mentioned earlier, this line bundle descends to the quotient, and it is well known that the polarized variety $((\gmnmodt{2}{n})_T^{ss}({\cal L}(n\omega_2)), \tilde{{\cal L}}(n \omega_2))$ is projectively normal (see, \cite{howard2005projective}, \cite{Ke}). We give an alternate proof. It is not clear to us whether this result extends to GIT quotients of higher rank Grassmannian's. To the best of our knowledge this question is open.\eat{ We were hoping that the techniques in this proof would allow us to settle the question one way or the other, but we couldn't succeed. Nevertheless} We believe that it is this kind of combinatorics which will be required to settle the question.

We follow the strategy outlined in Remark \ref{rem:projnormal}. Defining $R(m)$ to be $H^0(G_{2,n}, {\cal L}(n \omega_2)^{\otimes m})^T$  we show that $R(1)^{\otimes m} \rightarrow R(m)$ is surjective.

Let $p_{\underline{\tau}} = p_{\tau_1}p_{\tau_2} \ldots p_{\tau_{mn}}$ be a standard monomial in $R(m)$ and let $T_{\underline{\tau}}$ be the tableau  associated to this monomial.

Denote the columns of $T_{\underline{\tau}}$ by $C_1, C_2, \cdots ,C_{mn}$ with $C_i = [a_i, b_i]$. Our idea is to extract from the tableau $T_{\underline{\tau}}$ a semistandard Young subtableau $T_{\underline{\mu}}$, with each integer $1,2,\ldots,n$ appearing exactly two times.  Then the monomial $p_{\underline{\mu}}$ corresponding to this subtableau would be a zero weight vector in $R(1)$, and the monomial corresponding to the remaining columns in $T_{\underline{\tau}}$ would be a monomial $p_{\underline{\nu}} \in R(m-1)$.  If we were to succeed in doing this, we could write $p_{\underline{\tau}}$ 
as a product of  $p_{\underline{\mu}}$ and $p_{\underline{\nu}}$, and we would be 
done by induction on $m$. \eat{However that is not true as the monomial corresponding to tableau $z_{20}$ from Section \ref{s:tau37} shows.  If we were able to extract a subtableau in $z_{20}$ with each integer appearing three times, such a tableau would necessarily have to be one of the tableaus $y_{1}, y_{2},\ldots, y_{7}$. One can easily check that none of these tableaus occur as subtableaus of $z_{20}$}Since we were unable to do this directly we use straightening laws on tableaus to show that  $p_{\underline{\tau}}$ can be written as a sum of products of elements in $R(1)$.

Let $p_{\underline{\mu}} :=  p_{\tau_1}p_{\tau_{m+1}} \ldots p_{\tau_{mn - m+1}}$ and $p_{\underline{\nu}} = \widehat{p_{\tau_1}} p_{\tau_2}.p_{\tau_3}\cdots p_{\tau_m} \widehat{p_{\tau_{m+1}}} \cdots p_{\tau_{mn}}$. Here $\widehat{p}$ indicates that the corresponding term is omitted. 
Clearly $p_{\underline{\tau}} = p_{\underline{\mu}} p_{\underline{\nu}} $.

Let $T_{\underline{\mu}}$ and $T_{\underline{\nu}}$ denote the corresponding tableaus.

\begin{definition} 
\label{def:defect}
An integer $i$ is defected if $i$ appears an odd number of times in $T_{\underline{\mu}}$. Denote the set of defected integers by $\mathcal{D}$.
\end{definition}

\begin{lemma}
 All integers  in $\{1,2,\ldots,n\}$ occur in $T_{\underline{\mu}}$.
\end{lemma}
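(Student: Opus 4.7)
The plan is to argue by contradiction using a short pigeonhole calculation that exploits semistandardness of $T_{\underline{\tau}}$ together with the arithmetic structure of the columns selected for $T_{\underline{\mu}}$.

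First I would record the book-keeping. The tableau $T_{\underline{\tau}}$ has two rows of length $mn$, and because $p_{\underline{\tau}}$ lies in $R(m)$ each integer $k\in\{1,2,\ldots,n\}$ appears exactly $2m$ times in $T_{\underline{\tau}}$. Since the rows $a_1\le a_2\le\cdots\le a_{mn}$ and $b_1\le b_2\le\cdots\le b_{mn}$ are weakly increasing, the positions of $k$ in the top row form a consecutive block of some length $s_k\ge 0$, and the positions of $k$ in the bottom row form a consecutive block of some length $t_k\ge 0$, with $s_k+t_k=2m$.

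Next I would pin down the column indices picked out by $T_{\underline{\mu}}$. By definition these indices form the arithmetic progression $P=\{1,\,m+1,\,2m+1,\,\ldots,(n-1)m+1\}$, and its complement in $\{1,2,\ldots,mn\}$ is the disjoint union of the $n$ intervals $[jm+2,(j+1)m]$ for $j=0,1,\ldots,n-1$, each of length exactly $m-1$.

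Finally I would close the argument. Suppose for contradiction that some $k$ does not occur in $T_{\underline{\mu}}$; then neither the top-row block nor the bottom-row block of $k$ in $T_{\underline{\tau}}$ can meet $P$. Being an interval in $\{1,\ldots,mn\}$ disjoint from $P$, each such block must sit inside one of the length-$(m-1)$ gaps above, so $s_k\le m-1$ and $t_k\le m-1$, giving $2m=s_k+t_k\le 2(m-1)=2m-2$, which is absurd. I do not foresee any serious obstacle here; the lemma is a direct pigeonhole consequence of semistandardness plus the fact that every $k$ occurs exactly $2m$ times while any interval avoiding $P$ has length at most $m-1$.
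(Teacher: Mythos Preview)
Your proof is correct and is essentially the same pigeonhole argument as the paper's. The paper states it positively---each integer appears $2m$ times in $T_{\underline{\tau}}$, hence at least $m$ times in one row, and a consecutive block of length $\ge m$ must hit a column index $\equiv 1\pmod m$---while you phrase the contrapositive via the gap structure of the complement of $P$; the content is identical.
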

\begin{proof}
Every integer $j$ has to appear at least $m$ times in one of the rows of $T_{\underline{\tau}}$.  Because $T_{\underline{\tau}}$ is semistandard $j$ appears consecutively, so there is a column $C_i$ with $i \equiv 1 \pmod m$ containing $j$.
\end{proof}

\begin{lemma} 
There are even number of defected integers.
\end{lemma}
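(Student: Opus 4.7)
The plan is to reduce this to a simple parity/counting argument on the total number of entries of $T_{\underline{\mu}}$.

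First I would observe that $T_{\underline{\mu}}$, being built from the columns $C_1, C_{m+1}, C_{2m+1}, \ldots, C_{mn-m+1}$ of $T_{\underline{\tau}}$, consists of exactly $n$ columns of height $2$ (since we are in $G_{2,n}$). Hence the total number of entries of $T_{\underline{\mu}}$ is $2n$, which is even.

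Next, for each integer $i \in \{1,2,\ldots,n\}$, let $a_i$ denote the number of times $i$ appears in $T_{\underline{\mu}}$, so that $\sum_{i=1}^{n} a_i = 2n$. Reducing modulo $2$, we get
\[
|\mathcal{D}| \;=\; \#\{\,i : a_i \text{ odd}\,\} \;\equiv\; \sum_{i=1}^{n} a_i \;\equiv\; 2n \;\equiv\; 0 \pmod{2},
\]
which shows that the number of defected integers is even.

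The argument is essentially a one-line counting observation, so I do not anticipate any serious obstacle; the only thing to verify carefully is the count of columns of $T_{\underline{\mu}}$, which follows directly from the definition $p_{\underline{\mu}} = p_{\tau_1} p_{\tau_{m+1}} \cdots p_{\tau_{mn-m+1}}$ (this is an arithmetic progression with common difference $m$ from $1$ to $mn-m+1$, giving $n$ terms), together with the fact that each $\tau_j \in I(2,n)$ contributes a column of exactly two entries.
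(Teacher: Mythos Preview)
Your proposal is correct and is essentially the same parity argument as the paper's: both observe that $T_{\underline{\mu}}$ has $2n$ boxes, so the multiplicities $a_i$ sum to an even number, forcing the number of odd $a_i$ (the defected integers) to be even. Your version is in fact slightly cleaner, since the paper asserts that each non-defected integer appears exactly twice, whereas you only use that it appears an even number of times, which is all that is needed.
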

\begin{proof}
$T_{\underline{\mu}}$ has $2n$ boxes and all the integers appear in $T_{\underline{\mu}}$. Each integer which is not defected appears twice. The number of times a defected integer appears is odd, so there are an even number of defected integers.
\end{proof}

Before we prove the next lemma we set up some notation and make some  observations. 

Let $f_i$ (respectively, $l_i$) be such that $C_{f_i}$(respectively, $C_{l_i}$) is the column in which $i$ appears for the first (respectively, last) time in the bottom row of $T_{\underline{\tau}}$ . Similarly define $f^i$ and $l^i$ with respect to occurrences of $i$ in the top row.

\begin{observation}
\label{obs:modm}
$f_i \equiv x+1  \pmod m$ if and only if  $f^i \equiv m-x+1 \pmod m$. In particular if $f_i\equiv\ 1 \pmod m$ if and only if $f^i \equiv 1  \pmod m$ and in this case $i$ appears at least two times in  $T_{\underline{\mu}}$. 
\end{observation}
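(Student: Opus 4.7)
The plan is to translate both $f_i$ and $f^i$ into counts of entries strictly less than $i$ in the respective rows and then apply the global tally that every integer appears exactly $2m$ times in $T_{\underline{\tau}}$. First I would observe that because row $2$ of $T_{\underline{\tau}}$ is weakly increasing, the columns strictly to the left of $C_{f_i}$ are exactly the columns whose bottom entry is $<i$. Hence $f_i - 1$ equals the number of occurrences of integers in $\{1, \ldots, i-1\}$ in row $2$, and symmetrically $f^i - 1$ equals the number of such occurrences in row $1$.

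Summing over $j = 1, \ldots, i-1$ and using that each such $j$ appears a total of $2m$ times across the two rows gives
\[
(f_i - 1) + (f^i - 1) \;=\; 2m(i-1),
\]
so $f_i + f^i \equiv 2 \pmod m$. This single identity is the heart of the observation: if $f_i \equiv x+1 \pmod m$, then $f^i \equiv 2 - (x+1) \equiv m - x + 1 \pmod m$, and the converse follows by symmetry. Specializing to $x=0$ recovers $f_i \equiv 1 \pmod m$ iff $f^i \equiv 1 \pmod m$.

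For the final assertion, when the common residue is $1$ both column indices $f_i$ and $f^i$ have the form $km+1$, so both $C_{f_i}$ and $C_{f^i}$ lie among the columns selected to form $T_{\underline{\mu}}$; the former contributes an $i$ in its bottom entry, the latter an $i$ in its top entry. These must be genuinely distinct columns because in any semistandard two-row tableau every column containing $i$ in row $2$ strictly precedes every column containing $i$ in row $1$: if $a_j = i$ and $b_{j'} = i$ with $j \leq j'$, then either $j=j'$ (violating $a_j < b_j$) or $j<j'$, in which case column-strictness at column $j$ forces $b_j > i$, contradicting the weak monotonicity of row $2$ at $b_{j'}=i$. Hence $f_i < f^i$ and $i$ appears at least twice in $T_{\underline{\mu}}$. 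There is no substantive obstacle in this argument; the whole observation is a one-line count once the translation in the first step is set up, and the only place requiring care is the final verification that the two contributions to $T_{\underline{\mu}}$ come from distinct columns, which follows from the standard ``row-$2$ occurrences precede row-$1$ occurrences'' fact for two-row semistandard tableaux.
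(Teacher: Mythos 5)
Your proof is correct and follows essentially the same route as the paper's: both count the boxes to the left of $C_{f_i}$ and $C_{f^i}$, identify these counts with the occurrences of integers $1,\ldots,i-1$ in the bottom and top rows respectively, and use that the total $2m(i-1)$ is a multiple of $m$ to get $f_i+f^i\equiv 2\pmod m$. The only difference is that you spell out the final step (that $C_{f_i}$ and $C_{f^i}$ are distinct columns of $T_{\underline{\mu}}$ via $f_i<f^i$), which the paper leaves implicit; this is a harmless and slightly more careful presentation of the same argument.
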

\begin{proof}
Each integer less than $i$ appears $2m$ times and occurs in the top row in columns
before column $f^i$ and in the bottom row in columns before column $f_i$. The total number of positions for numbers from $1$ to $i-1$ is therefore a multiple of $m$. 
If $f_i$ is  $am+1+x$, then the number of boxes to the left of this column in the bottom row is $am+x$.  So $f^i$ must $bm+m-x+1$ for some $b$ so that the number of positions for integers $1$ to $i-1$ is $bm + m -x$ as needed.

The last statement follows since $T_{\underline{\mu}}$ is constructed by taking only columns numbered $1 \pmod m$ in $T_{\underline{\tau}}$
\end{proof}

Let $\mathcal{D} = \{i_1,i_2,\cdots,i_{2l}\} $ where $i_1 < i_2 \cdots < i_{2l}$.
 
 \begin{lemma} 
\label{lm:alternate}
Let $i_j \in \mathcal{D}$. In $T_{\underline{\mu}}$, $i_j$ appears 3 times if $j$ is odd and $i_j$ appears once if $j$ is even.
 \end{lemma}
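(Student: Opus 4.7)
The plan is to introduce, for each $i\in\{1,\ldots,n\}$, the phase $x_i := (f^i - 1) \bmod m \in \{0,1,\ldots,m-1\}$ recording where the block of $i$'s in the top row of $T_{\underline{\tau}}$ begins modulo $m$. By Observation \ref{obs:modm} the analogous bottom-row phase $(f_i-1)\bmod m$ equals $-x_i \bmod m$, so the positions of $i$ in both rows of $T_{\underline{\tau}}$, and hence in $T_{\underline{\mu}}$, are governed by this single quantity.

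First I would compute $n_i$, the number of occurrences of $i$ in $T_{\underline{\mu}}$, in terms of $x_i$ and $t_i$. Since $T_{\underline{\mu}}$ consists of exactly the columns of $T_{\underline{\tau}}$ indexed $\equiv 1\pmod m$, a direct count using $t_i + b_i = 2m$ yields: when $x_i=0$, $n_i = 2$ if $m\mid t_i$ and $n_i=3$ otherwise; when $x_i\neq 0$, $n_i=1$ if $t_i\equiv -x_i\pmod m$ and $n_i=2$ otherwise. This step, which amounts to a careful floor-function case analysis applied to the top and bottom rows separately, is the main technical obstacle, although it is quite manageable.

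Next, since consecutive blocks in the top row are adjacent, one has the recursion $x_{i+1}\equiv x_i + t_i \pmod m$. Combined with the formula above, this translates ``$i$ is defected'' into the statement ``exactly one of $x_i, x_{i+1}$ equals $0$'', and more precisely, $n_i=3$ corresponds to a transition $x_i=0 \to x_{i+1}\neq 0$ while $n_i=1$ corresponds to the reverse transition $x_i\neq 0 \to x_{i+1}=0$. Thus the non-defected integers are exactly those at which the 0-vs-nonzero status of $x$ is preserved.

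Finally, $x_1 = 0$ because $f^1 = 1$, and $x_{n+1}\equiv x_1 + \sum_{i=1}^n t_i = mn \equiv 0 \pmod m$. So the sequence $x_1, x_2, \ldots, x_{n+1}$ starts and ends at $0$, and its transitions between $0$ and nonzero values, which by the previous step occur precisely at the defected integers, must therefore alternate, beginning with a $0\to{\neq 0}$ transition at $i_1$ and ending with a ${\neq 0}\to 0$ transition at $i_{2l}$. Hence $n_{i_j}=3$ for $j$ odd and $n_{i_j}=1$ for $j$ even, as claimed.
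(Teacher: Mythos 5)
Your proposal is correct, and it takes a genuinely different and cleaner route than the paper. The paper proves the lemma by three separate ad-hoc position-counting arguments: that two consecutive defected integers cannot both have multiplicity $3$ in $T_{\underline{\mu}}$, that one with multiplicity $1$ must be followed by one with multiplicity $3$, and finally that the first defected integer has multiplicity $3$. Your proof instead packages all the position arithmetic into the single invariant $x_i = (f^i-1)\bmod m$, whose behaviour is governed by the simple recursion $x_{i+1}\equiv x_i + t_i\pmod m$, and then reduces the lemma to the observation that a walk on $\mathbb{Z}/m\mathbb{Z}$ starting and ending at $0$ must alternate its ``at $0$ / away from $0$'' status at the transitions. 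I checked the floor-function case analysis you deferred: writing $u = x_i + t_i$ and using $b_i = 2m - t_i$, one gets precisely $n_i = 2 + [x_i = 0] - [x_{i+1} = 0]$, so $n_i = 3$ corresponds to $x_i = 0, x_{i+1}\neq 0$ and $n_i=1$ to $x_i\neq 0, x_{i+1}=0$, as you claim; the degenerate cases $t_i = 0$ or $b_i = 0$ give $n_i = 2$ and preserve the status of $x$, consistently. One small point worth making explicit is that when $t_i = 0$ the quantity $f^i$ is not literally defined (the integer $i$ is absent from the top row), so $x_i$ should be defined via the recursion from $x_1 = 0$ rather than via $f^i$; this is what you implicitly do when passing to $x_{n+1}$. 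With that understood, the argument is complete. The gain over the paper is conceptual clarity and a uniform treatment of all three of the paper's sub-claims; what the paper's more hands-on proof buys is that it never leaves the tableau, which may make it feel more self-contained to a reader who has just absorbed the surrounding combinatorics.
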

 \begin{proof}
 
We show that two consecutive defected integers cannot both appear 3 times 
nor can they both appear once.  And then we show that the first integer which is defected appears 3 times.

Let us assume that some integer $i_j$ which is defected appears 3 times. W.l.o.g we may assume that it appears 2 times in the top  row and appears once in the bottom row. Assume that the next defected integer $i_{j+1}$ also appears 3 times. We prove it in the case when $i_{j+1}$ appears 2 times in the top row and once in the bottom row. The proof in the other case is similar.

Assume that the positions of $i_j$ (resp $i_{j+1}$) in $T_{\underline{\tau}}$ which contribute to its two occurrences in the top row of $T_{\underline{\mu}}$ are $(a-1)m+1, am+1$ (resp $bm+1, (b+1)m+1$). 
Likewise, assume that the positions of $i_j$, (respectively, $i_{j+1}$) in $T_{\underline{\tau}}$ contributing to the bottom row in $T_{\underline{\mu}}$ are $cm+1$ (respectively, $dm+1$). Clearly $c < a-1$ and $d < b$.
Let $x$ be the number of $i_j$ to the right of  position $am+1$ in the top row of $F$ and $z$ be the number of $i_j$ to the right of $cm+1$ in the bottom row of $T_{\underline{\tau}}$. Similarly let $y$ denote the 
number of $i_{j+1}$ to the left of position $bm+1$ in the top row and $w$ be the number of $i_{j+1}$ to the left of position $dm+1$ in the bottom row of $T_{\underline{\tau}}$. Clearly $x+z \leq m-2$ and $y+w \leq m-2$. 

Now $i_{j+1} = i_j +1$ is not possible. Because the number of $i_{j+1}$ in the top row is then at least $2m-x$ and the number of $i_{j+1}$ in the bottom row is at least $m-z$ a contradiction to the number of $i_{j+1}$ in $T_{\underline{\tau}}$, since $x + z \leq m-2$.

So let us assume that $i_{j+1} > i_j +1$. Now there are $i_{j+1} - i_j -1$ integers in between $i_j$ and $i_{j+1}$ which are not defected. Hence in $T_{\underline{\tau}}$ each of these integers occurs in exactly two positions which are in positions $1 \pmod m$. Hence the 
number of positions which are $1 \pmod m$ between the positions $am+1, bm+1$ and between
$cm+1, dm+1$ is exactly $2(i_{j+1} - i_j -1)$.  But this count is also equal to
$(b-a-1) + (d-c-1)$. Hence $b+d - a -c -2 = 2(i_{j+1} - i_j -1)$. Or $b+d -a-c=2((i_{j+1} - i_j)$. The total number of positions available for integers in the range $i_j+1$ to $i_{j+1}$ is exactly $bm -am -x -y -1 + dm-cm - z -w -1$ which is $m(b+d -a -c -2) - (x + y+z+w)$. Since each integer in this range appears exactly $2m$ times, and since $b+d -a-c=2((i_{j+1} - i_j)$ it follows that $x + y + z+w $ is $0$ modulo $2m$. If any of them is non zero this is impossible since $x+z \leq m-2$ and $y+w \leq m-2$. Suppose all of $x,y,z,w$ are zero. Then the positions $am+1+1$ to $bm$ and
 $cm+1+1$ to $dm$ are available for the integers $i_{j}+1,\ldots,i_{j+1}-1$.  This is 
$(b+d-a-c)m -2$ positions in all, which is also $2(i_{j+1}-i_j)m -2$ positions. 
But this is more positions than are required, since we have $i_{j+1} - i_j -1$ numbers each occurring $2m$ times - we require only $2m(i_{j+1} - i_j -1)$ positions. 

Next we show that if $i_j$ appears with defect 1 then $i_{j+1}$ appears with defect 3. W.l.o.g assume that $i_j$ appears in the top row in a column numbered 
$1 \pmod m$. So we know that $i_j$ appears less than $m$ times in the bottom row.

Assume that $f^{i_j}$ is $am + x+1$ for some $1 \leq x \leq m-1$. Then $f_{i_j}$ is
$bm + m-x+1$ for some $b$. Now since $i_j$ does notoccur in a column numbered $1  \pmod m$ in the bottom row, it follows that the number of $i_j$ in the bottom is at most $x$, so the number of $I_j$ in the top row is at least $2m-x$. But since there is only one occurrence of $i_j$ in a column numbered $1 \pmod m$,  there are at most $2m-x$ occurrences of $i_j$ in the top row.  It follows that the top row has exactly $2m-x$ occurrences of $i_j$ and the bottom row has exactly $x$ occurrences of $i_j$. So $l^{i_j} = l_{i_j}= 0  \pmod m$. Hence each integers between $i_j$ and $i_{j+1}$ which is not defected starts at a position which is $1  \pmod m\ $ on the top and ends at a $0  \pmod m \ $ position in the top and bottom rows (if it occurs in them). So the $f_{i_{j+1}}$ is forced to be 
$1 \pmod m$ and so $f^{i_{j+1}}$ is also $1 \pmod m$ by Observation \ref{obs:modm}. Since it is defected it occurs once more in a column numbered $1  \pmod m$.

We show that the first defected integer occurs 3 times to complete the proof. Suppose that $i_1$ occurs only once in $T_{\underline{\mu}}$. Then it occurs strictly more than $m$ times in the top or bottom row of $T_{\underline{\tau}}$.  W.l.o.g it occurs strictly more then $m$ times in the top row of 
$T_{\underline{\tau}}$, and say occurs in column $am+1$. Suppose $i_1$ makes its first appearance in $T_{\underline{\tau}}$ in column $(a-1)m+1+j$ for $0 < j \leq m-1$. Since it occurs only once in a column numbered $1 \pmod m$, the total number of occurence of $i_1$ in the top row of $T_{\underline{\tau}}$ is
at most $2m-j$. So it occurs in the bottom row of $T_{\underline{\tau}}$ as well. Now suppose its first occurence in the bottom row of $T_{\underline{\tau}}$ is in column $bm+1+k$, for $0 < k \leq m-1$. Since all integers less than $i_1$ occur $2m$ times in $T_{\underline{\tau}}$, it follows that $j+k = 0 \pmod m$.  But $j+k <2m$ and so $j+k=m$. Now each integer less than $i_i$ is not defected and so appears twice in
$T_{\underline{\tau}}$ in columns numbered $1 \pmod m$. The number of such columns available is $a-1+b$ and since this has to be even, $a+b$ must be odd. But then the total number of positions available for integers less than $i_1$ in $T_{\underline{\tau}}$  is $(a-1+b)m + j + k$ which is $(a+b)m$, an odd multiple of $m$. But each integer less than $i_1$ appears $2m$ times in $T_{\underline{\tau}}$, a contradiction to the number of available positions being an odd multiple of $m$.     
\end{proof}
  
 \begin{lemma} If $j$ is odd, $i_j$ appears in the top and bottom row of $T_{\underline{\mu}}$
 \end{lemma}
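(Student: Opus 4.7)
The plan is to rule out the two bad cases: that $i_j$ appears three times all in the top row of $T_{\underline{\mu}}$, or three times all in the bottom row. By Lemma~\ref{lm:alternate}, odd-indexed defected integers appear exactly three times in $T_{\underline{\mu}}$, so eliminating these two extremes forces $i_j$ to appear in both rows.

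The key structural fact I would exploit is that $T_{\underline{\tau}}$ is semistandard with only two rows, so within a given row each integer $k$ occupies a block of consecutive columns. Meanwhile $T_{\underline{\mu}}$ is built by picking out the columns of $T_{\underline{\tau}}$ whose index is $\equiv 1 \pmod{m}$.

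Concretely, suppose $i_j$ appeared three times in the top row of $T_{\underline{\mu}}$. Then there are three columns of $T_{\underline{\tau}}$ of the form $a_1 m+1 < a_2 m + 1 < a_3 m +1$ (with $a_1 < a_2 < a_3$) whose top entry is $i_j$. By consecutiveness in the top row of $T_{\underline{\tau}}$, every column between positions $a_1m+1$ and $a_3m+1$ also has $i_j$ in its top row, contributing at least
\[
(a_3m+1) - (a_1m+1) + 1 = (a_3 - a_1)m + 1 \geq 2m+1
\]
occurrences of $i_j$ in $T_{\underline{\tau}}$. This exceeds the total count $2m$, a contradiction. The same argument applied to the bottom row rules out three bottom-row occurrences in $T_{\underline{\mu}}$.

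Hence when $j$ is odd the three occurrences of $i_j$ in $T_{\underline{\mu}}$ split as $2+1$ or $1+2$ between the two rows, which is exactly what we need. The only potential obstacle would have been if a single row could accommodate three $1 \pmod m$-indexed columns of the same integer, but the tight bound $2m$ on the multiplicity of each integer in $T_{\underline{\tau}}$ combined with the consecutiveness in rows makes this impossible. No further case analysis is required.
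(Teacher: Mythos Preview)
Your proof is correct and follows essentially the same approach as the paper's: both argue that three occurrences of $i_j$ in a single row of $T_{\underline{\mu}}$ would force, by consecutiveness of equal entries in a row of $T_{\underline{\tau}}$ and the $m$-spacing of the selected columns, at least $2m+1$ copies of $i_j$ in $T_{\underline{\tau}}$, contradicting the total count of $2m$. Your version simply spells out the column indices and the inequality $(a_3-a_1)m+1\ge 2m+1$ more explicitly than the paper's one-line argument.
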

 \begin{proof}
 For $j$ odd we have $i_j$ appears thrice in $T_{\underline{\mu}}$. If all of them appear consecutively in  $T_{\underline{\mu}}$ then the number of $i_j$ is  $T_{\underline{\tau}}$ would be greater than $2m$, a contradiction. 
 \end{proof}
 
\begin{notation}
Let $T^k_{\underline{\tau}}$ be the subtableau of $T_{\underline{\tau}}$ having $m$ columns starting with $c_{(k-1)m+1}$ and ending with $c_{km}$. 
 
For $j$ odd,  let $l(j)$ be $\lfloor l_{i_j}/(m+1) \rfloor$. So $T^{l(j)}_{\underline{\tau}}$ is the subtableau containing the last occurrence of $i_j$ in the bottom i.e containing $C_{l_{i_j}}$ as one of its $m$ columns. For $j$ even let 
$f(j)$ be $\lfloor f_{i_j}/(m+1) \rfloor$. So $T^{f(j)}_{\underline{\tau}}$ is the subtableau containing the first occurrence of $i_j$ in the bottom row i.e containing $C_{f_{i_j}}$ as one of its $m$ columns.
We denote the first column of $T^k_{\underline{\tau}}$ by $T^k_{\underline{\tau}}[1]$ and the last column as $T^k_{\underline{\tau}}[m]$. 

For $j$ odd, let $S_{\underline{\tau}, j}$ denote the subtableau with columns $T_{\underline{\tau}}^{l(j)}[1], T_{\underline{\tau}}^{l(j)}[m],
T_{\underline{\tau}}^{l(j)+1}[1], T_{\underline{\tau}}^{l(j)+1}[m]$, $\ldots,$ $T_{\underline{\tau}}^{f(j+1)}[1] C_{f_{i_{j+1}}}$. 
Note that this tableau contains an even number of columns since $T_{\underline{\tau}}^{f(j+1)}[1]$ is different from $C_{f_{i_{j+1}}}$ - by definition $i_{j+1}$ appears only once in $T_{\underline{\mu}}$  and so its first occurence cannot be in a column numbered $1 \pmod m$ in $T_{\underline{\tau}}$ by Observation \ref{obs:modm}.

\eat{\begin{eqnarray}\nonumber
\beta_1(S_j) & = & C_{l_{i_j}}V(\Lambda_{\gamma_{i_j}}) \cr
\beta_2(S_j) &= & U(\Lambda_{\gamma_{i_j}+1})V(\Lambda_{\gamma_{i_j}+1}) \cr 
&\vdots & \cr
\beta_{t_{S_j}}(S_j) &= & U(\Lambda_{\gamma_{i_j}+t_{S_j}-1})C_{f_{i_{j+1}}} 
\end{eqnarray}
}

We denote by $S_{\underline{\tau},j}[k]$ the $2 \times 2$ subtableau of $S_{\underline{\tau},j}$ 
containing columns $2k-1$ and $2k$. To simplify notation we mostly omit the $\underline{\tau}$ and just denote this by $S_{j}[k]$ when $\underline{\tau}$ is clear from the context. 

Let $S_{\underline{\tau},j}[k]=
\begin{tabular}{|c|c|}\hline
 p & q  \\ \hline
 r  & s   \\ \hline
 \end{tabular} $. 

We set  $S_{\underline{\tau},j}[k](1)=p$,   $S_{\underline{\tau},j}[k](2)=q$,  $S_{\underline{\tau},j}[k](3)=r$ and  $S_{\underline{\tau},j}[k](4)=s$. 
 
\end{notation}

\eat{Using the following zero weight vector in $G_{2,7}$ we explain the notation set up above.}

\eat{\begin{definition}
For a standard monomial $F = p_{\tau_1}p_{\tau_2} \cdots p_{\tau_{mn}} $ define $N_F = \sum l(\tau_i)N^{(m+1)n-i-1}$ where $N >> 2(n-2)$. 
\end{definition}
}
We will use the degree lexicographic order on rectangular $2 \times m$, semistandard Young tableau. Recall that as per this order a monomial $p=p_{\tau_1}\ldots p_{\tau_m} $ corresponding to a rectangular $2 \times m$ is bigger than a monomial $q=q_{\mu_1} \ldots q_{\mu_{m'}} $ corresponding to $2 \times m'$ semistandard  
Young tableau if $m >m'$ or, if $m=m'$, then for the smallest $i$ such that $\tau_i \neq \mu_i$ it is the case that $\tau_i > \mu_i$ in the usual lexicographic order on $r$ length words.

Now we fix a $j$ which is odd and look the subtableau $S_j$ defined above for this $j$. Suppose $S_j$ has $2t$ columns.

\begin{lemma}
\label{lm:adj} 
For $1 \leq k < t$ we have $S_j[k](4) = S_j[k+1](3) $.  For $k$ such that $i_j \leq S_j[k](1) \leq i_{j+1}$ it is the case that $S_j[k](2)=S_j[k+1](1)$.  
\end{lemma}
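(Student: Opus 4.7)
The plan is to unpack the definition of $S_j$ so that both claimed equalities become statements about adjacent columns of $T_{\underline{\tau}}$, and then to rule out any mismatch by a parity/counting argument on columns congruent to $1$ modulo $m$.

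By the listed order, $S_j$ consists in order of the first and last columns of the consecutive sub-tableaux $T_{\underline{\tau}}^{l(j)}, T_{\underline{\tau}}^{l(j)+1}, \ldots, T_{\underline{\tau}}^{f(j+1)}$, with the last block contributing $T^{f(j+1)}[1]$ paired with $C_{f_{i_{j+1}}}$. Tracking indices, one finds that for each $1 \leq k < t$ the entries $S_j[k](4)$ and $S_j[k+1](3)$ are the bottom entries of the two adjacent columns of $T_{\underline{\tau}}$ numbered $(l(j)+k-1)m$ and $(l(j)+k-1)m+1$, and similarly $S_j[k](2)$ and $S_j[k+1](1)$ are the top entries of the same pair. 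So both assertions reduce to the statement that at each block boundary $rm \mid rm+1$ with $r \in [l(j), f(j+1)-1]$, neither row of $T_{\underline{\tau}}$ changes entry (the top-row claim being restricted to the range in which the top entry lies in $[i_j, i_{j+1}]$).

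The intended route is by contradiction. Suppose the bottom entry jumps from $a$ at column $rm$ to $b>a$ at column $rm+1$. Then $f_b = rm+1 \equiv 1 \pmod m$, and by Observation \ref{obs:modm} also $f^b \equiv 1 \pmod m$, so $b$ contributes at least two occurrences to $T_{\underline{\mu}}$, one in each row. If $b = i_{j+1}$ this already contradicts Lemma \ref{lm:alternate}, which guarantees a single appearance of $i_{j+1}$ in $T_{\underline{\mu}}$. If $i_j < b < i_{j+1}$, then $b$ is non-defected and must appear exactly twice in $T_{\underline{\mu}}$; a careful bookkeeping of columns $\equiv 1 \pmod m$ inside the range $[(l(j)-1)m+1, f(j+1)m]$, where the defected integers $i_j$ and $i_{j+1}$ supply $3+1=4$ aligned slots and each non-defected integer supplies exactly $2$, should then produce a mismatch with the actual number of such aligned columns available, giving the contradiction. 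The top-row equality is handled symmetrically; the hypothesis $i_j \leq S_j[k](1) \leq i_{j+1}$ ensures we remain in the same region controlled by non-defected integers together with the boundary integers $i_j$ and $i_{j+1}$.

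The main obstacle is the bookkeeping itself: one must track precisely how the three occurrences of $i_j$ and the single occurrence of $i_{j+1}$ in $T_{\underline{\mu}}$ distribute themselves at the two ends of the range, and verify that any extra aligned starting column forced by a hypothetical transition cannot be absorbed elsewhere without violating either the total-occurrence count $a_b+b_b = 2m$ for some $b$ or the parity of aligned columns. This is the same kind of modular counting used in the proofs of Observation \ref{obs:modm} and Lemma \ref{lm:alternate}, so the technique carries over, with the caseload being the only real difficulty.
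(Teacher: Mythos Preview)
Your setup is correct and matches the paper's: unpacking $S_j$ so that $S_j[k](4)$ and $S_j[k+1](3)$ are the bottom entries of adjacent columns $rm$ and $rm+1$, then assuming a jump so that $b := S_j[k+1](3)$ has $f_b \equiv 1 \pmod m$ and hence (by Observation~\ref{obs:modm}) $f^b \equiv 1 \pmod m$. Your treatment of the case $b = i_{j+1}$ is also the paper's: two aligned occurrences of $i_{j+1}$ contradict Lemma~\ref{lm:alternate}.

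The gap is in the case $i_j < b < i_{j+1}$. You propose a global count of aligned slots in the range $[(l(j)-1)m+1,\, f(j+1)m]$ and assert it ``should then produce a mismatch'', but you never say what the mismatch is. The difficulty is that this range is not populated only by integers in $[i_j,i_{j+1}]$ (the top row may carry smaller entries well into the range), and a non-defected integer's two aligned occurrences need not both lie inside the range. So the slot count does not close up in the way you suggest, and without further argument there is no contradiction at $b$ itself: it is perfectly consistent for a non-defected $b$ to start at aligned positions in both rows and appear exactly $m$ times in each.

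The paper closes this gap by a \emph{propagation} step rather than a global count. From $f_b \equiv 1$ and $f^b \equiv 1$ one argues that if $b$ did not appear exactly $m$ times in each row it would hit three aligned columns, contradicting non-defectedness; hence $b$ is split $m/m$, so $l_b \equiv l^b \equiv 0 \pmod m$, forcing $f_{b+1} \equiv f^{b+1} \equiv 1 \pmod m$. This repeats for $b+1, b+2, \ldots$ until one reaches $i_{j+1}$, which is then forced to start at aligned positions in both rows and hence to appear at least twice in $T_{\underline{\mu}}$ --- again contradicting Lemma~\ref{lm:alternate}. Replacing your unspecified bookkeeping with this propagation makes the argument complete; the top-row statement is then symmetric, as you say.
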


\begin{proof}
If not, let $ S_j[k](4) \neq S_j[k+1](3)$ for some $k$.  Then $f_{S_j[k+1](3)} \equiv 1 \pmod m$. So we have $f^{S_j[k+1](3)} \equiv 1 \pmod m$ from Observation \ref{obs:modm}. If the number of times $S_j[k+1](3)$ appears in row $1$ or row 2 is not $m$ then $S_j[k+1](3)$ would occur 3 times in $T_{\underline{\mu}}$, a contradiction to the fact that $S_j[k+1](3)$ is not defected. So $S_j[k+1](3)$ appears $m$ times in row 1 and m times in row 2, and this pattern continues - all the intermediate $S_j[k+1](3)$, till we see $i_{j+1}$ appear $m$ times in the top and bottom row and occur first in the top and in the bottom in columns numbered $1 \pmod m$. But this force $f_{i_{j+1}} \equiv 1 \pmod m$, and as argued above $f^{i_{j+1}}\equiv 1 \pmod m$ - i.e. since $i_{j+1}$ is defected it has to occur 3 times which is a contradiction to lemma \ref{lm:alternate} since $j$ is odd.

The second statement has a similar proof and is omitted.
\end{proof}

\begin{lemma} 
\label{betanormal}
Let $j$ be odd and suppose $S_j$, has $2t_j$ columns for some $t_j$. Then for $1 \leq k \leq t_j$ we have $S_j[k](3) > S_j[k](2)$.
\end{lemma}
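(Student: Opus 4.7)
I would argue by contradiction: assume $S_j[k](3) \le S_j[k](2)$ for some $k \in \{1, \ldots, t_j\}$, and derive a contradiction from the defect pattern of Lemma \ref{lm:alternate} together with the mod $m$ constraints of Observation \ref{obs:modm}.

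The argument splits into three cases depending on whether $k = 1$, $1 < k < t_j$, or $k = t_j$. For an intermediate pair, $S_j[k]$ consists of the first and last columns of the $2 \times m$ subtableau $T^{l(j)+k-1}_{\underline{\tau}}$; denote its rows by $(a_1, \ldots, a_m)$ and $(b_1, \ldots, b_m)$, so the goal becomes $b_1 > a_m$. For intermediate $k$ this subtableau lies strictly between $T^{l(j)}_{\underline{\tau}}$ (which contains $C_{l_{i_j}}$) and $T^{f(j+1)}_{\underline{\tau}}$ (which contains $C_{f_{i_{j+1}}}$), so every bottom row entry lies in the open interval $(i_j, i_{j+1})$ and is therefore non-defected. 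If $b_1 \le a_m$ held, then some integer $c \in [b_1, a_m]$ would appear in both rows of $T^{l(j)+k-1}_{\underline{\tau}}$. I would apply Observation \ref{obs:modm} to this non-defected $c$, forcing $f^c$ and $f_c$ to have a specific mod $m$ relation; combining this with (i) the requirement that $c$ contribute exactly two columns to $T_{\underline{\mu}}$, (ii) the fact that the first column of $T^{l(j)+k-1}_{\underline{\tau}}$ is at position $\equiv 1 \pmod m$, and (iii) the cumulative position count of all integers up to $a_m$ in $T_{\underline{\tau}}$, I would produce an inconsistency.

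For the first pair $k = 1$, the additional input is that $T^{l(j)}_{\underline{\tau}}$ contains $C_{l_{i_j}}$ and that (by the preceding lemma, which states that for $j$ odd $i_j$ appears in both rows of $T_{\underline{\mu}}$) the mod $m$ positions of $l^{i_j}$ and $l_{i_j}$ are pinned down, from which $b_1 > a_m$ follows in this subtableau. For the last pair $k = t_j$, the right column is $C_{f_{i_{j+1}}}$ rather than $T^{f(j+1)}_{\underline{\tau}}[m]$; since $i_{j+1}$ appears only once in $T_{\underline{\mu}}$, Observation \ref{obs:modm} forces $f_{i_{j+1}} \not\equiv 1 \pmod m$, so $C_{f_{i_{j+1}}}$ sits strictly interior to $T^{f(j+1)}_{\underline{\tau}}$. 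An argument parallel to the intermediate case, using this offset together with the defect count of $i_{j+1}$, then yields the inequality.

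The main obstacle I anticipate is the careful mod $m$ bookkeeping, particularly in the intermediate case where several non-defected integers in the range $[b_1, a_m]$ can have varying alignment patterns in $T_{\underline{\tau}}$ (each contributing either $1$ or $2$ columns $\equiv 1 \pmod m$ per row depending on where their contiguous runs start). The last-pair case is also delicate, since $C_{f_{i_{j+1}}}$ may be quite close to $T^{f(j+1)}_{\underline{\tau}}[1]$, leaving only a narrow margin for the strict inequality.
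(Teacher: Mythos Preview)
Your plan is headed in a harder direction than necessary, and the key step---``combining (i), (ii), (iii) I would produce an inconsistency''---is not actually carried out. You yourself flag the mod $m$ bookkeeping as the main obstacle, and indeed it is not clear how the existence of some non-defected $c$ lying in both rows of a single $m$-block, together with Observation~\ref{obs:modm}, forces a contradiction: a non-defected integer \emph{can} occupy both rows of the same $m$-block without violating any of the constraints you list. So as written the argument has a genuine gap at its central step, in all three of your cases.

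The paper's proof avoids all of this by working directly in $T_{\underline{\mu}}$ with a parity count, never touching mod $m$ arithmetic in $T_{\underline{\tau}}$. The point is that the column $[S_j[k](1),\,S_j[k](3)]$ is itself a column of $T_{\underline{\mu}}$ (it sits at a position $\equiv 1 \pmod m$ in $T_{\underline{\tau}}$), and by Lemma~\ref{lm:adj} the \emph{next} column of $T_{\underline{\mu}}$ has top entry $S_j[k+1](1)=S_j[k](2)$. Hence if $S_j[k](3) < S_j[k](2)$, every integer $\le S_j[k](3)$ lives entirely in the columns of $T_{\underline{\mu}}$ up to and including $[S_j[k](1),\,S_j[k](3)]$; that is an even number of boxes. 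But those boxes contain $i_j$ three times (defect $3$), while for $j>1$ the earlier defected pairs $i_1,\dots,i_{j-1}$ contribute $3+1=4$ each and everything else contributes $2$, so the total is odd. That is the contradiction. No case split on $k=1$ versus $k=t_j$ is needed, and the argument is uniform across all $k$.
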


\begin{proof}
We prove it for $j=1$. We first show this for $k=1$.  
Assume $S_1[1](3) < S_1[1](2)$. Consider the tableau $T_{\underline{\mu}}$. The column $[S_1[1](1), S_1[1](3)]$ is a column numbered $1 \pmod m$ in $T_{\underline{\tau}}$ and so this column appears in 
$T_{\underline{\mu}}$.  If $S_1[1](3) < S_1[1](2)$, then all occurrences of $S_1[1](3)$ in
$T_{\underline{\mu}}$ appear in this column and to the left. The total number of positions in the boxes to the left of this column (including this column) in $T_{\underline{\mu}}$ is an even number. But $S_j[1](3)$ appears 3 times in these boxes since $i_1$ has defect 3,  and each other integer appears an even number of times since they are not defected. This is a contradiction.

Now we show this for $k >1$. Note that the column $[S_1[k](1),S_1[k](3)]$ occurs in
$T_{\underline{\mu}}$ since it is a column numbered $1 \pmod m$ in $T_{\underline{\tau}}$. If  $S_1[k](3) > S_1[k](2)$ then all occurrences of $S_1[k](3)$ in $T_{\underline{\mu}}$
are in this column and to its left. This is true for $S_1[k](1)$ too. Since $S_1[k](1)$ and $S_1[k](3)$ are not defected, they appear twice. The total number of positions to the left of (and including this ) column  $[S_1[k](1),S_1[k](3)]$ in  $T_{\underline{\mu}}$ is even. As before this is a contradiction since $i_1$ appears 3 times and all the other numbers appear twice. 

For $j$ odd and bigger than $1$, the proof is similar. 
Recall that the first column of $S_{\underline{\tau}, j}$ is column $T_{\underline{\tau}}^{l(j)}[1]$ and this appeares in $T_{\underline{\mu}}$.
The only point to note is that in 
$T_{\underline{\mu}}$, the columns strictly  to left of the column $T_{\underline{\tau}}^{l(j)}[1]$ contains all occurrences of the previous $i_k$, $k < j$ and the sum of the occurrences of
these $i_k$, $k < j$ is even. So too is the sum of occurrences of the remaining integers since they are not defected. The argument then proceeds as in the $j=1$ case.
\end{proof}

\begin{proposition} 
The map $R(1)^{\otimes m} \rightarrow R(m)$ is surjective.
\end{proposition}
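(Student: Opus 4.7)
I would prove this by induction on $m$, the case $m=1$ being trivial. Given a standard monomial $p_{\underline{\tau}} \in R(m)$, the natural candidate factorization is $p_{\underline{\tau}} = p_{\underline{\mu}} \cdot p_{\underline{\nu}}$, where $T_{\underline{\mu}}$ collects the columns of $T_{\underline{\tau}}$ numbered $\equiv 1 \pmod m$. When the defect set $\mathcal{D}$ is empty, $T_{\underline{\mu}}$ is a zero-weight $2 \times n$ semistandard tableau (so $p_{\underline{\mu}} \in R(1)$) and $T_{\underline{\nu}}$ is a zero-weight $2 \times (m-1)n$ semistandard tableau (so $p_{\underline{\nu}} \in R(m-1)$), and the inductive hypothesis applied to $p_{\underline{\nu}}$ closes the case. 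Hence everything reduces to rewriting an arbitrary $p_{\underline{\tau}}$ as a $\mathbb{C}$-linear combination of standard monomials of zero defect, plus terms lying below $p_{\underline{\tau}}$ in a suitable well-founded order.

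For that rewriting I would exploit the structural lemmas already assembled. By Lemma \ref{lm:alternate} the defected integers pair up as $(i_j, i_{j+1})$ with $j$ odd, where $i_j$ appears three times and $i_{j+1}$ only once in $T_{\underline{\mu}}$. For each such $j$ the subtableau $S_{\underline{\tau}, j}$ supplies a chain of $2 \times 2$ blocks $\begin{pmatrix} p & q \\ r & s \end{pmatrix}$ whose left column $[p,r]$ lies in $T_{\underline{\mu}}$ and whose right column $[q,s]$ lies in $T_{\underline{\nu}}$. Lemma \ref{betanormal} supplies the crucial inequality $r > q$, so the Plücker relation
\[
p_{[p,r]}\, p_{[q,s]} \;=\; p_{[p,q]}\, p_{[r,s]} \;+\; p_{[p,s]}\, p_{[q,r]}
\]
is a genuine rewriting. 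Its leading term $p_{[p,q]}\, p_{[r,s]}$ swaps the entry $r \in T_{\underline{\mu}}$ with $q \in T_{\underline{\nu}}$, and by Lemma \ref{lm:adj} the blocks of $S_{\underline{\tau}, j}$ are glued together so that performing these local swaps sequentially along the chain migrates one copy of $i_j$ out of $T_{\underline{\mu}}$ and one copy of $i_{j+1}$ into $T_{\underline{\mu}}$, resolving both defects of the pair $(i_j, i_{j+1})$ simultaneously. The correction term $p_{[p,s]}\, p_{[q,r]}$ at each step, after full straightening, is a $\mathbb{C}$-linear combination of standard monomials strictly smaller than $p_{\underline{\tau}}$ in the degree-lexicographic order on $2 \times mn$ rectangular tableaux introduced before Lemma \ref{lm:adj}.

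This sets up a double induction, outer on $m$ and inner on the degree-lex order, both of which are well-founded. The outer induction takes care of the $\mathcal{D} = \emptyset$ case via direct splitting; the inner induction absorbs the correction terms produced by Plücker straightening; and a finite number of Plücker moves, one per defected pair, takes $\mathcal{D}$ down to $\emptyset$. The principal obstacle I foresee is the careful bookkeeping of the propagation: one must verify that the tableau obtained after a sequence of local exchanges along a single $S_{\underline{\tau}, j}$ can be straightened into standard form without pushing back up in the lex order, and that the exchanges performed for one defect pair $(i_j, i_{j+1})$ do not undo progress made on another pair. Lemma \ref{lm:adj} is precisely tailored to make the intra-chain compatibility work, and the disjointness of the integer ranges $[i_j, i_{j+1}]$ for distinct odd $j$ should handle cross-chain independence.
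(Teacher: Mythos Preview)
Your overall architecture matches the paper's proof: induction on $m$, the candidate factor $T_{\underline{\mu}}$ built from columns $\equiv 1\pmod m$, the defect pairing from Lemma~\ref{lm:alternate}, the chains $S_{\underline{\tau},j}$, and a Pl\"ucker rewriting controlled by a well-founded lex order. The gap is in the Pl\"ucker step, where you have interchanged the roles of leading term and correction term, and this breaks both the migration and the induction.

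Write the relation for $p<q\le r<s$ as $p_{[p,r]}p_{[q,s]}=p_{[p,s]}p_{[q,r]}+p_{[p,q]}p_{[r,s]}$. The paper takes $p_{[p,s]}p_{[q,r]}$ as the leading term (swap the \emph{bottom-row} entries $r\leftrightarrow s$) and $p_{[p,q]}p_{[r,s]}$ as the correction. This is forced by Lemma~\ref{lm:adj}: the gluing there is $S_j[k](4)=S_j[k+1](3)$, i.e.\ the $s$ of one block equals the $r$ of the next, so it is precisely the bottom-row swap that telescopes along the chain and at the last block inserts $S_j[t_j](4)=i_{j+1}$ into $T_{\underline{\mu}}$. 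Your diagonal swap $r\leftrightarrow q$ does not telescope this way; at the last block it would insert $q_{t_j}$, the \emph{top} entry of $C_{f_{i_{j+1}}}$, which is strictly smaller than $i_{j+1}$, so $i_{j+1}$ stays defected and the defect is not resolved.

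The reversal also kills the inner induction. Your correction term $p_{[p,s]}p_{[q,r]}$ sits, in the full tableau, with $[p,s]$ at the $T_{\underline{\mu}}$ position, and $[p,s]>[p,r]$ lexicographically. Straightening it gives back $p_{\underline{\tau}}-p_{[p,q]}p_{[r,s]}\cdot(\text{rest})$, so $p_{\underline{\tau}}$ itself reappears with coefficient~$1$ in the standard expansion of your correction term and the lex-descent is circular. By contrast the paper's correction $p_{[p,q]}p_{[r,s]}$ has $[p,q]<[p,r]$ at that position, which is what makes the lex argument go through.

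Finally, the proof of Lemma~\ref{betanormal} only rules out $S_j[k](3)<S_j[k](2)$, so equality $r=q$ can occur; when it does the bottom-row swap would produce the zero column $[q,q]$ and the diagonal swap is the identity. The paper handles this separately (its Case~2) with a more elaborate pattern of column and row swaps along the chain. Your proposal, relying on a strict inequality, does not cover this case.
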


\begin{proof}
\label{R_1}

The proof will be induction. For $p_{\underline{\tau}}$ in $R(m)$, we will show that there exists $p_{\underline{\mu}} \in R(1)$ and $p_{\underline{\nu}} \in R(m-1)$ and $p_{\underline{\tau}^j} \in R(m)$ such that $p_{\underline{\tau}} = p_{\underline{\nu}} p_{\underline{\gamma}} +  \sum_{j} p_{\underline{\tau}^j}  $ with  $p_{\underline{\tau}^j}  < p_{\underline{\tau}}$ in lexicographic order. Then an induction based on degree lexicographic order on monomials completes the proof.

The base case - the least monomial in lexicographic order is $p_{\underline{\tau}}$ corresponding to the semistandard Young tableau filled with $[1,2]$ in the first $2m$ columns and then $[3,4]$ and so on.  If we take the columns $1+j,m+1+j,2m+1+j,\ldots,(n-1)m+1+j$ for $0 \leq j \leq m-1$, the tableaus obtained are semistandard and the associated monomial is a zero weight vector $p_{\underline{\tau^j}}\in R(1)$. The product of these monomials is $p_{\underline{\tau}}$.

In general starting with $p_{\underline{\tau}}$ we construct $p_{\underline{\mu}}$ and 
$p_{\underline{\nu}}$ as given before Definition \ref{def:defect}, by taking for $T_{\underline{\mu}}$ the subtableau with columns $1,m+1,\ldots,(n-1)m+1$. If $p_{\underline{\mu}}$ is a zero weight (i.e in the corresponding tableau no integer is defected) we are done.  $p_{\underline{\tau}}$ is the
product of a zero weight vector in $R(1)$ and an element in $R(m-1)$ and we are done by induction on degree. 

Otherwise, proceeding as above we have defected integers $\{i_1,i_2,\ldots, i_{2l}\}$.  Corresponding to the integer $i_j$ in $\{i_1,i_3,\ldots,i_{2l-1}\}$, we have subtableaus $S_j$ and lemma \ref{betanormal} holds. For $j$ odd let the number of columns in $S_j$ be $2 t_j$.

Case 1 : Suppose for all $j$ and $1 \leq k \leq t_j$ it is the case that $S_j[k](3) > S_j[k](2) $. 

In this case we do the following operation : We change $S_j[k](3)$ to $S_j[k](4)$ and keep $ S_j[k](1)$, $ S_j[k](2) $ fixed for all $j$ odd and for all $1 \leq k \leq t_j$. We get a new Young tableau call it $S_j'$. We modify the original tableau corresponding to $p_{\underline{\tau}}$ by replacing the columns which were previously used to get $S_j$ by the corresponding columns of 
$S_j'$. We do this for every $j$. 

Denote the new monomial computed by this tableau by $p_{\underline{\tau'}}$ and denote by $p_{\underline{\mu'}}$ the monomial obtained from this tableau by selecting columns numbered $1,m+1,2m+1,\ldots, (n-1)m+1$. It is clear that $T_{\underline{\mu'}}$ is semi-standard.  Furthermore for every $j$ odd, one of the $i_j$'s which appeared in a column numbered $1 \pmod \ $ in $T_{\underline{\mu}}$ appears in now in a column numbered $0 \pmod m$, and so it's count in $T_{\underline{\mu'}}$ is one less than in $T_{\underline{\mu}}$. So $i_j$ is not defected in $T_{\underline{\mu'}}$.  For this same $j$ the last exchange is done between $S_j[t_j](3)$ and $S_j[t_j](4)$ and this is $i_{j+1}$. So this $i_{j+1}$ now occurs in a column numbered $1 \pmod m$ in $T_{\underline{\tau'}}$, and so the count of $i_{j+1}$ in $T_{\underline{\mu'}}$ is one more than in $T_{\underline{\mu}}$. So $i_{j+1}$ is not defected in $T_{\underline{\mu'}}$. Since $S_j[k](4) = S_j[k+1](3)$ for all $k$, the counts of the remaning integers in $T_{\underline{\mu'}}$ is the same as their count in $T_{\underline{\mu}}$, so these remain not defected. This is true for every $j$. So no integer is defected in $T_{\underline{\mu'}}$ and the corresponding monomial is a zero weight vector in $R(1)$. So  $p_{\underline{\tau'}}$ is a product of a zero weight monomial in $R(1)$ and an element of $R(m-1)$.   increases the count of reduced the number of $i_j$ by one and increased the number of $i_{j+1}$ by one. 

To finish the proof in this case we compare $p_{\underline{\tau}}$ with $p_{\underline{\tau'}}$.
Let us deonote the set of columns of $T_{\underline{\tau}}$ not in any $S_j$ by $Q$ and the monomial computed by them as $y$. If $S_{\underline{\tau},j}$ has $2t_j$ columns the monomial computed by it is a product of the $t_j$ monomials computed by the $2 \times 2$ subtableaus $S_{\underline{\tau},j}[k]$,
 $p_{S_{j}[k]}=p_{(S_j[k](1),S_j[k](4))}p_{(S_j[k](2),S_j[k](3))}$.  
We have
\begin{eqnarray}
p_{\underline{\tau}}= y \cdot \Pi_{j=1}^{j=l} \Pi_{k=1}^{k=t_j}p_{S_{j}[k]} \label{eqn:tau1}\\
p_{\underline{\tau'}}= y \cdot \Pi_{j=1}^{j=l} \Pi_{k=1}^{k=t_j}p_{S'_{j}[k]} \label{eqn:tau2}
\end{eqnarray} 

From the straightening laws the following relation holds between the tableaus
$S_{j}[k]$ and $S'_{j}[k]$. 
\begin{eqnarray} 
\label{eqn:str}\begin{tabular}{|c|c|}\hline
 p & q  \\ \hline
 r  & s   \\ \hline
 \end{tabular}
=\begin{tabular}{|c|c|}\hline
 p & q  \\ \hline
 s  & r   \\ \hline
 \end{tabular}
\pm \begin{tabular}{|c|c|}\hline
 p & r  \\ \hline
 q & s   \\ \hline
 \end{tabular}
\end{eqnarray}
Recall that in the equation above $S_{j}[k]$ is the tableau on the left hand side of the equation and 
$S'_{j}[k]$ is the first tableau on the right side. 

Plugging this into Equation \ref{eqn:tau1} above
we see that $p_{\underline{\tau}}$ is the sum of $p_{\underline{\tau'}}$ and sums of products of monomials obtained from $p_{\underline{\tau}}$ by replacing at least one
of the terms $p_{S_{j}[k]}$ in its expression by $p'_{j}[k]$, the monomial computed by the second tableau on the right hand side of Equation \ref{eqn:str}.  However since $r > q$ (from Lemma \ref{betanormal}), it follows that the second tableau on the right is lexicographically smaller than the tableau  $S_{j}[k]$. So the $2 \times mn$ tableau corresponding to each additional term obtained by plugging Equation \ref{eqn:str} into Equation \ref{eqn:tau1} is lexicographically smaller than $T_{\underline{\tau}}$. It is possible that this tableau is not semistandard and needs to be straightened into a sum of semistandard tableaus. But each such tableau $T_{\underline{\tau''}}$, will be lexocographically smaller than the (non semistandard) tableau we started with.  We proved above that $p_{\tau'}$ is the product of $p_{\underline{\mu'}} \in R(1)$ and a monomial $p_{\underline{\nu'}}\in R(m-1)$.  We have
\[ p_{\underline{\tau}}= p_{\underline{\mu'}} p_{\underline{\nu'}} + \sum_s p_{\underline{\tau''_s}}\] 
the sum being over tableaus which are smaller than $\tau$ in lexicographic order. By induction on lexicographic order each of these is in the image of $R(1)^{\otimes m}$. By induction on degree $p_{\underline{\nu'}}$ is in the image of $R(1)^{\otimes (m-1)}$.
So we are done.

Case 2 : For $j$ in which the conditions of Case 1 hold we do exactly as in that case. Let $j$ be such that $S_j[k](3) = S_j[k](2)$ for some $1 \leq k < t_j$. For each such $j$ we do the following.  First note that for such a $j$,  $i_j$ appears in $S_j$ as $S_j[k](1)$ for some $1 < k < t_j$, since it has defect 3. Let $m$ be the set of all elements $i_j \leq m < i_{j+1}$ with $m=S_j[k](3)$ in some subtableau $S_j[k]$ with $S_j[k](3) = S_j[k](2)$.  Order this set as $\{m_1, m_2, \cdots, m_e \}$ such that $ i_{j} \leq m_1 < m_2 < m_3 \cdots < m_e < i_{j+1}$, and let $k_i$ denote the index for which $m_i=S_j[k_i](3)=S_j[k_i](2)$ - clearly $m_s \neq m_t$ for $s\neq t$ and $m_e < i_{j+1}$ and $k_i \geq k$.  Let $x_i, y_i$ denote $S_{j}[k_i](1)$ and $S_{j}[k_i](4)$. 
For $k < k_i < t_j$ it follows from Lemma \ref{lm:adj} that $S_j[k_i-1](4) = m_i$,
$S_j[k_i+1](1)=m_i$, $S_j[k_i-1](2) = x_i$ and $S_j[k_i+1](3)=y_i$.
We have two subcases.
\begin{itemize}
\item[i] $e$ is odd: In this case we first swap $S_j[l](1)$ and $S_j(l)(2)$ for all $k \leq l \leq k_1-1$.
Then swap the two columns in $S_{k_1}$. And swap $S_j[l](3)$ and $S_j[l](4)$ for all 
$k_1+1 \leq l \leq k_2-1$.  Do nothing with $S_j[k_2]$. Instead start with $m_2$ which appears in $S_{j}[k_2+1](1)$ and repeat these steps. Since $m_e$ is odd, the last set of swap will happen in the bottom row starting from $y_{e} = S_j[k_e+1](3)$ up to $i_{j+1}=S_{j}[t_j](4)$. 
\item[ii] $e$ is even: In this case we swap $S_j[l](3)$ and $S_j(l)(4)$ for all $1 \leq l \leq k_1-1$.
Do nothing with $S_j[k_1]$. Instead swap $S_j[l](1)$ and $S_j[l](2)$ for all 
$k_1+1 \leq l \leq k_2-1$ and then swap the two columns of $S_j[k_2]$. And repeat the procedure from the $y_2$ which appears as $S_{k_2+1}[l](3)$. Since $m_e$ is even it can be checked that the last swaps will happen in the bottom row from $y_{e}=S_j[k_e+1](3)$ to $i_{j+1}=S_j[t_j](4)$.
\end{itemize}

After these round of swaps, we can use straightening as we did in case 1 above, to complete the proof.  The last set of swaps take place in the bottom row starting with an element occurring in a $1 \pmod m$ position and ending with the first occurence of $i_{j+1}$ in the bottom row - this is true in both cases. In both cases the first set of swaps start with $i_j$ occurring in a $1 \pmod m$ position and end with an element 
occurring in a position $0 \pmod m$.  It can be checked that if we form tableau $S_{j'}$ as we did in Case 1 above, the number of $i_j$ has reduced and the number of $i_{j+1}$ has increased. The number of occurrences of the intermediate numbers does not change because of the column swaps performed.  Furthermore the other set of swaps 
between elements in the top row and elements in the bottom row in an $S_j[l]$ take place in those $l$ wherein $S_j[l](3) > S_j[l](2)$. One checks as in Case 1 above that 
straightening introduces new zero weight tableaus, but all of them are lexicographically smaller than the tableau we start with. This completes the proof.

\end{proof}

\begin{theorem} 
$(\gmnmodt{2}{n})_T^{ss}({\cal L}(n\omega_2)), \tilde{{\cal L}}(n \omega_2))$ is projectively normal.
\end{theorem}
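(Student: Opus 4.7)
The plan is to assemble this theorem directly from the pieces already in place in the section. The strategy laid out in Remark \ref{rem:projnormal} (transcribed from the $G=SL(7)$ setting to $G=SL(n)$) tells us what we must check: the GIT quotient $(\gmnmodt{2}{n})_T^{ss}({\cal L}(n\omega_2))$ is $\mathrm{Proj}(\oplus_m R(m))$ with $R(m)=H^0(G_{2,n},{\cal L}(n\omega_2)^{\otimes m})^T$, the polarized variety $(G_{2,n},{\cal L}(n\omega_2))$ is projectively normal by standard monomial theory, and $T$ is linearly reductive so $S(1)^{\otimes m} \twoheadrightarrow S(m)$ induces $R(1)^{\otimes m}\to R(m)$. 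Thus to conclude projective normality of the polarized quotient it suffices to know (i) that the quotient is normal and (ii) that the multiplication map $R(1)^{\otimes m}\to R(m)$ is surjective for every $m\ge 1$.

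First I would dispatch (i). Since $n$ is odd, $\gcd(2,n)=1$, so by the theorem at the beginning of Section \ref{ss:gitrich} (applied with $r=2$) the quotient $(\gmnmodt{2}{n})_T^{ss}({\cal L}(n\omega_2))$ is smooth, hence in particular normal. This removes any concern about the Serre $S_2$ or $R_1$ conditions on the quotient side and reduces projective normality to pure surjectivity of the multiplication map in every degree.

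Next I would invoke the Proposition proved immediately above this theorem, which asserts precisely that $R(1)^{\otimes m}\to R(m)$ is surjective. That is exactly statement (ii). Combining (i) and (ii) with Remark \ref{rem:projnormal} finishes the proof: the homogeneous coordinate ring of the embedded quotient equals the subring of $\oplus_m R(m)$ generated in degree one, which is all of $\oplus_m R(m)$, and normality of the quotient plus this surjectivity is the definition of projective normality.

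There is essentially no obstacle left at this stage, because all the hard combinatorics, the case analysis on defected integers, the parity argument splitting odd and even $j$, the swap operations on the $S_j[k]$ and the appeal to the straightening relation \ref{eqn:str} with the inequality $r>q$ from Lemma \ref{betanormal} that controls lexicographic descent, was already carried out in the proof of the preceding Proposition. The theorem is therefore a one line corollary: apply that Proposition together with smoothness (and hence normality) of the quotient coming from $\gcd(2,n)=1$, and conclude via Remark \ref{rem:projnormal}.
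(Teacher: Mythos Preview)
Your approach is essentially the paper's own: combine normality of the quotient with the surjectivity of $R(1)^{\otimes m}\to R(m)$ established in the preceding Proposition, via the strategy of Remark~\ref{rem:projnormal}. That is exactly what the paper does, in two sentences.

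One small correction: the smoothness theorem at the start of Section~\ref{ss:gitrich} is stated for the quotient of the \emph{minimal Schubert variety} $X(w_{r,n})$, not for the full Grassmannian $G_{r,n}$, so you cannot literally cite it for $(\gmnmodt{2}{n})_T^{ss}({\cal L}(n\omega_2))$. The fix is immediate: either note that the same proof (semistable $=$ stable for $\gcd(2,n)=1$, trivial stabilizers in $T_{ad}$, geometric quotient of a smooth variety) applies verbatim with $G_{2,n}$ in place of $X(w_{2,n})$, or, more simply, observe that $G_{2,n}$ is smooth hence normal, and GIT quotients of normal varieties by reductive groups are normal. The paper itself just asserts normality without further comment, presumably for this standard reason. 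Since you only need normality (not smoothness) to invoke Remark~\ref{rem:projnormal}, this is the cleanest route.
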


\begin{proof}
Now $$(\gmnmodt{2}{n})_T^{ss}({\cal L}(n\omega_2)), \tilde{{\cal L}}(n \omega_2))$$ is normal.  From Proposition \ref{R_1} we have the $R_1$ generation. The theorem follows.
\end{proof}

\begin{corollary}
The GIT quotient of a Schubert variety in $G_{2,n}$ is projectively normal with respect to the descent line bundle.
\end{corollary}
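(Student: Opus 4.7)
The plan is to imitate the argument used in Corollary \ref{rem:prj} for the $(3,7)$ case, this time using the projective normality of the full GIT quotient $(\gmnmodt{2}{n})_T^{ss}(\mathcal{L}(n\omega_2))$ just established in the preceding theorem.

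First, I would recall that by standard monomial theory for $SL_n$ (see \cite{brion2003geometric}, or Chapter 4 of \cite{lakshmibai2007standard}), for any Schubert variety $X(w) \subset G_{2,n}$ the restriction map
\[ H^0(G_{2,n}, \mathcal{L}(n\omega_2)^{\otimes m}) \longrightarrow H^0(X(w), \mathcal{L}(n\omega_2)^{\otimes m}) \]
is surjective for every $m \geq 0$. Since $T$ is linearly reductive, taking $T$-invariants is exact, so the induced map on $T$-invariant sections is also surjective. Writing $R(m) := H^0(G_{2,n}, \mathcal{L}(n\omega_2)^{\otimes m})^T$ and $R_w(m) := H^0(X(w), \mathcal{L}(n\omega_2)^{\otimes m})^T$, this gives a surjection $R(m) \twoheadrightarrow R_w(m)$ for each $m$.

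Next, I would put these surjections into the commutative square
\[
\begin{array}{ccc}
R(1)^{\otimes m} & \longrightarrow & R(m) \\
\downarrow & & \downarrow \\
R_w(1)^{\otimes m} & \longrightarrow & R_w(m)
\end{array}
\]
in which the vertical arrows are surjective by the previous paragraph and the top horizontal arrow is surjective by Proposition \ref{R_1} (the projective normality of $(\gmnmodt{2}{n})_T^{ss}(\mathcal{L}(n\omega_2))$ just proved). A diagram chase then shows that the bottom arrow is surjective, so the graded ring $\bigoplus_m R_w(m)$ is generated in degree one.

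Finally, since the Schubert variety $X(w)$ is normal and $T$ is reductive, the invariant ring $\bigoplus_m R_w(m)$ is a normal graded ring, hence the GIT quotient $\schbmodt{w}^{ss}_T(\mathcal{L}(n\omega_2)) = \mathrm{Proj}\bigl(\bigoplus_m R_w(m)\bigr)$ is a normal projective variety. Combined with the $R_1$-generation above (following the reasoning outlined in Remark \ref{rem:projnormal}), this yields projective normality of the GIT quotient of $X(w)$ with respect to the descent of $\mathcal{L}(n\omega_2)$. No step here is a genuine obstacle given the preceding theorem; the only subtlety is making sure that the surjectivity $R(m) \twoheadrightarrow R_w(m)$ is invoked correctly for $T$-invariants, which is immediate from the linear reductivity of $T$.
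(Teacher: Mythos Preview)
Your argument is correct and follows exactly the same approach as the paper's proof: surjectivity of the restriction map on $T$-invariants via linear reductivity of $T$, together with normality of $X(w)_T^{ss}(\mathcal{L}(n\omega_2))$. You have simply spelled out the diagram chase and the normality reasoning more explicitly than the paper does.
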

\begin{proof}
$T$ is a linearly reductive group.  For a Schubert variety $X(w)$ in $G(2,n)$ the map 
$H^0(G_{2,n},\mathcal{L}(n\omega_2)^{\otimes m})^T \longrightarrow H^0(X(w),\mathcal{L}(n\omega_2)^{\otimes m})^T$ is surjective. 
Since $X(w)_{T}^{ss}(\mathcal{L}(n\omega_2))$ is normal the corollary follows.
\end{proof}

We have an analogue of Corollary \ref{rem:prj}. The proof is similar and is omitted.
\begin{corollary}
The GIT quotient of a Richardson varieties in $G_{2,n}$ is projectively normal with respect to the descent line bundle.
\end{corollary}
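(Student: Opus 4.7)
The plan is to mirror the argument used for Corollary \ref{rem:prj}, now invoking the projective normality of $(\gmnmodt{2}{n})_T^{ss}({\cal L}(n\omega_2))$ established in the preceding theorem instead of Theorem \ref{thm:projnormal}. So given a Richardson variety $X^v_w$ inside $G_{2,n}$, I want to show that its GIT quotient with respect to the descent of ${\cal L}(n\omega_2)$ is projectively normal.

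First, I would appeal to \cite[Proposition 1]{brion2003geometric} to obtain the surjective restriction map
\[
H^0(G_{2,n}, {\cal L}(n\omega_2)^{\otimes m}) \longrightarrow H^0(X^v_w, {\cal L}(n\omega_2)^{\otimes m}).
\]
Since $T$ is linearly reductive, taking $T$-invariants preserves surjectivity, so the induced map
\[
H^0(G_{2,n}, {\cal L}(n\omega_2)^{\otimes m})^T \longrightarrow H^0(X^v_w, {\cal L}(n\omega_2)^{\otimes m})^T
\]
is again surjective for every $m \geq 1$.

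Next I would combine these surjections over all $m$ and use the fact (from the previous theorem) that $\bigoplus_m H^0(G_{2,n}, {\cal L}(n\omega_2)^{\otimes m})^T$ is generated in degree one to conclude that $\bigoplus_m H^0(X^v_w, {\cal L}(n\omega_2)^{\otimes m})^T$ is also generated in degree one. This gives the $R_1$-generation for the invariant ring attached to the Richardson variety.

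Finally, normality of the GIT quotient of $X^v_w$ follows because $X^v_w$ is normal and the ring of invariants of a normal variety under a reductive group action is normal. Combined with the degree-one generation, this yields projective normality of the GIT quotient of $X^v_w$ with respect to the descent line bundle. No step in this argument is expected to be hard; the main content, namely projective normality of the ambient quotient $(\gmnmodt{2}{n})_T^{ss}({\cal L}(n\omega_2))$, has already been handled, so the corollary reduces to the standard linear-reductivity-plus-normality package.
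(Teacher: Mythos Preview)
Your proposal is correct and follows exactly the approach the paper intends: the paper itself omits the proof, stating only that it is analogous to Corollary~\ref{rem:prj}, and your argument is precisely that analogue with $G_{2,n}$ in place of $X(w_{3,7})$ and the preceding theorem in place of Theorem~\ref{thm:projnormal}.
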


\bibliography{KKS}
\bibliographystyle{ieeetr}
\end{document}